\def\R{{\mathbb R}}
\def\N{{\mathbb N}}
\def\KK{{\mathcal K}}
\def\NN{{\mathcal N}}
\def\OO{{\mathcal O}}
\def\PP{{\mathcal P}}
\def\SS{{\mathcal S}}
\def\TT{{\mathcal T}}
\def\XX{{\mathcal X}}
\def\diam{{\rm diam}}
\def\edual#1#2{\langle\hspace*{-1mm}\langle#1\,,\,#2\rangle\hspace*{-1mm}\rangle}
\def\norm#1#2{\|#1\|_{#2}}
\def\enorm#1{|\hspace*{-.5mm}|\hspace*{-.5mm}|#1|\hspace*{-.5mm}|\hspace*{-.5mm}|}
\def\set#1#2{\big\{#1\,:\,#2\big\}}
\def\eps{\varepsilon}
\def\dual#1#2{\langle#1\,,\,#2\rangle}
\def\gal{{\mathcal{G}}}
\def\level{{\rm level}}
\def\QQ{{\mathcal{Q}}}
\def\normal{\boldsymbol{n}}
\def\dlp{\mathcal{K}} 
\def\hyp{\mathcal{W}} 
\def\prec{\mathcal{P}} 
\def\PAS{\prec_{\rm AS}}
\def\PAStilde{\widetilde\prec_{\rm AS}}
\def\PPAS{\mathbf{P}_{\rm AS}}
\def\PPAStilde{\widetilde{\mathbf{P}}_{\rm AS}}
\def\AA{\mathbf{A}}
\def\PP{\mathbf{P}}
\def\BB{\mathbf{B}}
\def\CC{\mathbf{C}}
\def\DD{\mathbf{D}}
\def\evmin{\lambda_{\rm min}}
\def\evmax{\lambda_{\rm max}}
\def\xx{{\mathbf{x}}}
\def\yy{{\mathbf{y}}}
\def\bb{{\bf b}}
\def\cond{{\rm cond}}
\def\hmax#1{h_{{\rm max},#1}}
\def\hmin#1{h_{{\rm min},#1}}
\def\linhull{{\rm span}} 
\def\supp{{\rm supp}}
\newcounter{constantsnumber}
\def\namec#1#2{%
  \ifthenelse{\equal{#1}{lipschitz}}{C_{\rm lip}}{%
  \ifthenelse{\equal{#1}{monotone}}{C_{\rm mon}}{%
  \ifthenelse{\equal{#1}{cea}}{C_{\mbox{\rm\scriptsize C\'ea}}}{%
  \ifthenelse{\equal{#1}{norm}}{C_{\rm norm}}{%
  \ifthenelse{\equal{#1}{mon}}{{C}_{\rm mon}}{
  \ifthenelse{\equal{#1}{lip}}{{C}_{\rm lip}}{
  \ifthenelse{\equal{#1}{monA}}{c_{\rm mon}}{
  \ifthenelse{\equal{#1}{lipA}}{c_{\rm lip}}{
  \ifthenelse{\equal{#1}{normequiv1}}{c_{\rm norm}}{ 
  \ifthenelse{\equal{#1}{inv}}{C_{\rm inv}}{ 
  \ifthenelse{\equal{#1}{inv2}}{\widetilde{C}_{\rm inv}}{ 
  \ifthenelse{\equal{#2}{newcounter}}{\refstepcounter{constantsnumber}\label{const#1}}{}C_{\ref{const#1}}}%
  }}}}}}}}}}}
\def\setc#1{\namec{#1}{newcounter}}
\def\c#1{\namec{#1}{reference}}
\newtheorem{theorem}{Theorem}
\newtheorem{proposition}[theorem]{Proposition}
\newtheorem{lemma}[theorem]{Lemma}
\def\subsection#1
\bf\arabic{section}.\arabic{subsection}.~#1.~}
\begin{document}

\title[Efficient additive Schwarz preconditioning for hypersingular integral
equations on locally refined triangulations]{Efficient additive Schwarz preconditioning for hypersingular integral equations 
\\on locally refined triangulations}
\date{\today}

\author{Michael Feischl}
\author{Thomas F\"uhrer}
\author{Dirk Praetorius}
\author{Ernst P.~Stephan}

\thanks{{\bf Acknowledgement.} The research of the authors is supported 
by the Austrian Science Fund (FWF) through the research project 
{\em Adaptive boundary element method}, funded under grant P21732.}

\keywords{preconditioner, multilevel additive Schwarz,
hypersingular integral equation}
\subjclass[2010]{65N30, 65F08, 65N38}
\begin{abstract}
For non-preconditioned Galerkin systems, the condition number grows
with the number of elements as well as the quotient of the maximal and the
minimal mesh-size. Therefore, reliable and effective numerical 
computations, in particular on adaptively refined meshes, require the
development of appropriate preconditioners.
We analyze and numerically compare multilevel additive Schwarz preconditioners
for hypersingular integral equations, where 2D and 3D as well as closed
boundaries and open screens are covered. The focus is on a new local
multilevel preconditioner which is optimal in the sense that 
the condition number of the corresponding preconditioned system is 
independent of the number of elements, the local mesh-size, and the 
number of refinement levels.
\end{abstract}
\maketitle

\section{Introduction}

\noindent
Let $\Omega\subset\R^d$ be a bounded polygonal resp.\ polyhedral Lipschitz 
domain in $\R^d$, $d=2,3$, with connected boundary $\Gamma = \partial\Omega$. 
For a given right-hand side $f$, we consider the hypersingular integral equation
\begin{align}\label{eq:hypsing}
  \hyp u (x) := -\partial_{\normal_x} \int_\Gamma \partial_{\normal_y} G(x-y)
  u(y) \,ds_y
  = f(x) \quad\text{for }x\in\Gamma.
\end{align}
Here, $\partial_{\normal_x}$ is the normal derivative with respect to
$x\in\Gamma$, and $G(z)$ denotes the fundamental solution of the Laplacian
\begin{align}
  G(z) = \begin{cases}
    -\frac1{2\pi} \log|z| &\text{for } d=2, \\
    +\frac1{4\pi} \frac1{|z|} &\text{for } d=3.
  \end{cases}
\end{align}
The exact solution $u$ of~\eqref{eq:hypsing} cannot be computed analytically in general. For a given
triangulation $\TT_\ell$ of $\Gamma$, one can e.g.\ use the Galerkin boundary element
method (BEM) to compute an approximation $u_\ell$ of $u$ instead. 
If a certain accuracy of the approximation $u_\ell\approx u$ is required, 
adaptive mesh-refining algorithms of the type
\begin{align*}
 \boxed{\texttt{ SOLVE }}
 \quad\longrightarrow\quad
 \boxed{\texttt{ ESTIMATE }}
 \quad\longrightarrow\quad
 \boxed{\texttt{ MARK }}
 \quad\longrightarrow\quad
 \boxed{\texttt{ REFINE }}
\end{align*}
are used, where, starting with a given initial triangulation $\TT_0$, a sequence of locally refined triangulations $\TT_\ell$ and corresponding Galerkin
solutions $u_\ell$ are computed. The lowest-order BEM for~\eqref{eq:hypsing} uses $\TT_\ell$-piecewise
affine and globally continuous functions $u_\ell\in\SS^1(\TT_\ell)$ to approximate
$u$, and the adaptive mesh-refinement leads to a nested sequence of spaces
$\SS^1(\TT_\ell)\subset\SS^1(\TT_{\ell+1})$ for all $\ell\ge0$. 

In recent years, the convergence of adaptive BEM even with quasi-optimal algebraic
rates has been proved~\cite{fkmp,gantumur,part1,part2}. 
Throughout, it is however assumed that the Galerkin solution
$u_\ell$ is computed exactly, i.e.\ the resulting linear system 
$\AA^\ell \xx^\ell = \bb^\ell$ is solved exactly. As is well known, the accuracy
of direct solvers as well as the effectivity of iterative solvers is usually 
spoiled by the conditioning of the matrix $\AA^\ell$. For uniform triangulations 
$\TT_\ell$ with number of elements $N_\ell = \# \TT_\ell$, it holds
$\cond_2(\AA^\ell) \lesssim N_\ell^{1/(d-1)}$ for the $\ell_2$-condition number.
For adaptively refined triangulations $\TT_\ell$ with maximal element diameter
$\hmax\ell$ and minimal element diameter $\hmin\ell$ the situation is even
worse~\cite{amt99}, namely $\cond_2(\AA^\ell) \lesssim N_\ell (1 + |\log(N_\ell
\hmin\ell)|)$ for $d=2$ resp. $\cond_2(\AA^\ell) \lesssim N_\ell^{1/2}
(\hmax\ell / \hmin\ell)^2$ for $d=3$.

Therefore, reliable and effective numerical computations require the development
of efficient preconditioners. Prior work includes diagonal scaling of the BEM
matrices which reduces the condition number for adaptive triangulations down to
that of a uniform triangulation with the same number of elements~\cite{amt99,gm06}. 
Other preconditioners for the Galerkin BEM of hypersingular integral equations
are proposed in~\cite{tsm97,sw98,tsz98,cao} and the references 
therein, where mainly quasi-uniform triangulations are thoroughly analyzed.

Our work focuses on additive Schwarz preconditioners for the Galerkin BEM 
of~\eqref{eq:hypsing} with lowest-order polynomials $u_\ell\in\SS^1(\TT_\ell)$. 
For uniform triangulations, it is shown in~\cite{transtep96} that this approach leads to 
bounded condition numbers for the preconditioned system, i.e.\
$\cond((\BB^\ell)^{-1}\AA^\ell)\le C<\infty$ with some $\ell$-independent constant
$C>0$.
The same is proved for partially adapted triangulations in~\cite{amcl03}, where it
is assumed that $\TT_{\ell}\cap\TT_{\ell+1}\subset\TT_{\ell+k}$ for all $\ell,k\in\N_0$, 
i.e.\ as soon as an element $T\in\TT_\ell$ is not refined, it remains non-refined 
in all succeeding triangulations. In our contribution, we 
remove such an assumption which is infeasible in practice, and only rely on 
nestedness $\SS^1(\TT_\ell)\subset\SS^1(\TT_{\ell+1})$ of the discrete ansatz
spaces. 
The main idea is to use only new nodes in $\TT_{\ell+1}
\backslash \TT_\ell$ plus their neighbouring nodes for preconditioning. 
In the frame of 2D FEM problems, such an idea has already been considered 
in the works~\cite{mitchell,wuchen06,xch10}. For a V-cycle multigrid method,
stability  for the subspace decomposition in $H^1$ has been proved 
in~\cite{wuchen06} by means of a variant of the Scott-Zhang
projection~\cite{sz}.
In our work, we extend these results to the fractional-order Sobolev space $H^{1/2}$.

First, we give the analysis for the case $\Gamma=\partial\Omega$ and a 
stabilized Galerkin formulation which factors the constant functions out. We 
stress that the results of this work also apply to screens 
$\Gamma\subsetneqq\partial\Omega$, and the corresponding analysis is obtained
by simply omitting all stabilization related terms. We also refer to the short
Section~\ref{sec:screen} for further remarks.

While all constants and their
dependencies are explicitly given in all statements, in proofs we use the symbol 
$\lesssim$ to abbreviate $\le$ up to some multiplicative constant which is clear 
from the context. Moreover, we use $\simeq$ to abbreviate that both estimates
$\lesssim$ and $\gtrsim$ holds.

The remainder of this work is organized as follows:
Section~\ref{sec:main} contains the analytical main result of this 
work. We first recall the necessary notation to define the new local 
multilevel preconditioner and then formulate the main result
(Theorem~\ref{thm:main}). 
Furthermore, we introduce a global multilevel preconditioner and give a
similar but weaker result (Theorem~\ref{thm:gmld}).
For the ease of presentation, we first focus
on closed boundaries.
Section~\ref{sec:examples} formulates other preconditioners.
Numerical experiments on closed boundaries and slits in 2D compare the
condition numbers of the corresponding preconditioners as well as those
with no preconditioning. In both, theory
and practice, the new local multilevel preconditioner proves to be optimal
in the sense that the condition number remains uniformly bounded which
is not the case for the other strategies considered.
In Section~\ref{section:proof} we give a proof of Theorem~\ref{thm:main} and in
Section~\ref{section:proof:gmld} we give a proof of Theorem~\ref{thm:gmld}.
The final Section~\ref{sec:screen} comments on extension of the
analysis to open screens in 2D and 3D.
We show that the main result for the local and global multilevel preconditioner also
hold for problems on open screens $\Gamma \subsetneqq \partial\Omega$
in 2D and 3D.


\section{Main result}
\label{sec:main}
\subsection{Continuous setting}
Let $\Gamma:=\partial\Omega$.
By $H^s(\Gamma)$, we denote the usual Sobolev spaces which are, e.g., given
by real interpolation $H^s(\Gamma) = [L^2(\Gamma);H^1(\Gamma)]_s$, for 
$0<s<1$, and we let $H^0(\Gamma):=L^2(\Gamma)$. The space
$H^{-s}(\Gamma) := H^s(\Gamma)^*$ is the dual space of $H^s(\Gamma)$, where
duality is understood with respect to the extended $L^2(\Gamma)$-scalar 
product $\dual\cdot\cdot_\Gamma$.

It is known that $\hyp$ induces a linear and bounded operator 
$\hyp:H^{s}(\Gamma)\to H^{s-1}(\Gamma)$, for all $0\le s\le 1$ which is
symmetric and positive semidefinite on $H^{1/2}(\Gamma)$. Moreover,
it holds $\dual{\hyp v}{1}_\Gamma=0$. Let 
$H_0^{\pm1/2}(\Gamma):=\set{v\in H^{\pm1/2}(\Gamma)}{\dual{v}{1}_\Gamma=0}$.
We thus suppose that the right-hand side $f$ in~\eqref{eq:hypsing} satisfies $f\in H^{-1/2}_0(\Gamma)$.

As $\Gamma$ is connected, the kernel of $\hyp$ are precisely the constant 
functions, and thus $\hyp:H_0^{1/2}(\Gamma)\to H_0^{-1/2}(\Gamma)$ is linear, 
continuous, symmetric, and elliptic. By virtue of the Rellich compactness 
theorem, the definition
\begin{align}
 \edual{v}{w} := \dual{\hyp v}{w}_\Gamma + \dual{v}{1}_\Gamma\dual{w}{1}_\Gamma
\end{align}
provides a scalar product on $H^{1/2}(\Gamma)$, and the induced norm
$\enorm{v}^2 := \edual{v}{v}$ is equivalent to the usual $H^{1/2}(\Gamma)$-norm.
In particular, the hypersingular integral equation~\eqref{eq:hypsing} is equivalently recast
in the variational formulation
\begin{align}\label{eq:weakform}
 \edual{u}{v} = \dual{f}{v}_\Gamma
 \quad\text{for all }v\in H^{1/2}(\Gamma).
\end{align}
According to the Lax-Milgram lemma, this formulation allows for a unique
solution $u\in H^{1/2}(\Gamma)$. Due to $f\in H^{-1/2}_0(\Gamma)$, it follows
$u\in H^{1/2}_0(\Gamma)$.

\subsection{Triangulation and general notation}\label{section:main:triangulation}
Let $\TT_\ell$ denote a regular triangulation of $\Gamma$ into compact
affine line segments ($d=2$) resp.\ compact plane surface triangles ($d=3$).
We define the local mesh-width function $h_\ell \in L^\infty(\Gamma)$ by
\begin{align}
  h_\ell|_T := h_\ell(T) := \diam(T) \quad\text{for all }T\in\TT_\ell.
\end{align}
We suppose that $\TT_\ell$ is $\gamma$-shape regular in the sense that
\begin{align}\label{eq:shaperegular}
  h_\ell(T) \leq \gamma \, h_\ell(T') 
  \quad\text{and}\quad
  h_\ell(T) \leq \gamma \, |T|^{1/(d-1)}
\end{align}
for all $T,T'\in\TT_\ell$ with $T\cap T'\neq\emptyset$.
Here and throughout, $|T|$ denotes the $(d-1)$-dimensional surface measure of 
$T\in\TT_\ell$ and hence $|T| = \diam(T)$ for $d=2$. We note that for either
dimension $d=2,3$, one of the conditions in~\eqref{eq:shaperegular} is automatically satisfied.

We consider lowest-order conforming boundary elements, where
\begin{align}\label{eq:defdiscretespace}
 \XX^\ell := \SS^1(\TT_\ell) := \set{v\in C(\Gamma)}{v|_T \text{ is affine for
 all }T\in\TT_\ell}.
\end{align}
Let $\NN_\ell$ denote the set of nodes of the mesh 
$\TT_\ell$. The natural basis of $\XX^\ell$ is given by the hat-functions. 
For each node $z\in \NN_\ell$, let $\eta_z^\ell\in\SS^1(\TT_\ell)$ be the 
hat-function characterized by
\begin{align}
  \eta_z^\ell(z) = 1 \quad\text{and}\quad \eta_z^\ell(z') = 0
  \quad\text{for all } z'\in\NN_\ell \backslash \{z\}.
\end{align}

For any subset $\tau\subseteq\Gamma$, we define the patch
$\omega_\ell^k(\tau)\subseteq \Gamma$ inductively by
\begin{align}\label{eq:patch}
 \omega_\ell^1(\tau) := \omega_\ell(\tau)
 := \bigcup\set{T\in\TT_\ell}{T\cap\tau\neq\emptyset},
 \quad
 \omega_\ell^{k+1}(\tau) := \omega_\ell^k(\omega_\ell(\tau))
 \quad\text{for }k\in\N.
\end{align}
For any node $z\in\TT_\ell$, we abbreviate $\omega_\ell^k(z):=\omega_\ell^k(\{z\})$ and note that $\omega_\ell(z) = \supp(\eta_z^\ell)$.

As in~\cite{wuchen06}, we further
define for every node $z\in\NN_\ell$ the mesh-width $h_\ell(z)$ as the 
shortest edge $E$ of $\TT_\ell$ with $z\in E$.
It holds
\begin{align}
  h_\ell(z) \le h_\ell(T) \lesssim h_\ell(z)
  \quad\text{for all }z\in\NN_\ell\text{ and }
  T\in\TT_\ell \text{ with } z\in T,
\end{align}
where the hidden constant depends only on the $\gamma$-shape regularity of
$\TT_\ell$. The hat-functions satisfy
\begin{align}\label{eq:hatfunprop}
\begin{split}
  0\leq \eta_z^\ell \leq 1,
  \quad
  \norm{\nabla \eta_z^\ell}{L^\infty(\Gamma)} \lesssim h_\ell^{-1}(z),
  \quad\text{and}\quad
  \sum_{z\in\NN_\ell} \eta_z^\ell = 1,
\end{split}
\end{align}
where the hidden constant depends only on the $\gamma$-shape regularity of
$\TT_\ell$.

\subsection{Galerkin discretization}
The Galerkin solution $u_\ell\in\XX^\ell$ to the solution
$u$ of~\eqref{eq:weakform} solves
\begin{align}\label{eq:galerkin}
 \edual{u_\ell}{v_\ell} = \dual{f}{v_\ell}_\Gamma
 \quad\text{for all }v_\ell \in \XX^\ell.
\end{align}
Fixing a numbering of the nodes $\NN_\ell = \{z_1,\dots,z_N\}$, the
discrete solution $u_\ell$ from~\eqref{eq:galerkin} is obtained by solving
a linear system of equations $\AA^\ell \xx^\ell = \bb^\ell$ in $\R^N$, where
\begin{align}\label{eq:galerkinmatrix}
 \AA^\ell_{jk} = \edual{\eta_{z_k}^\ell}{\eta_{z_j}^\ell},
 \quad
 \bb^\ell_j = \dual{f}{\eta_{z_j}^\ell}_\Gamma,
 \quad\text{and}\quad 
 u_\ell = \sum_{k=1}^N \xx^\ell_k\eta_{z_k}^\ell.
\end{align}

\begin{figure}[t]
 \centering
 \psfrag{T0}{}
 \psfrag{T1}{}
 \psfrag{T2}{}
 \psfrag{T3}{}
 \psfrag{T4}{}
 \psfrag{T12}{}
 \psfrag{T34}{}
 \includegraphics[width=35mm]{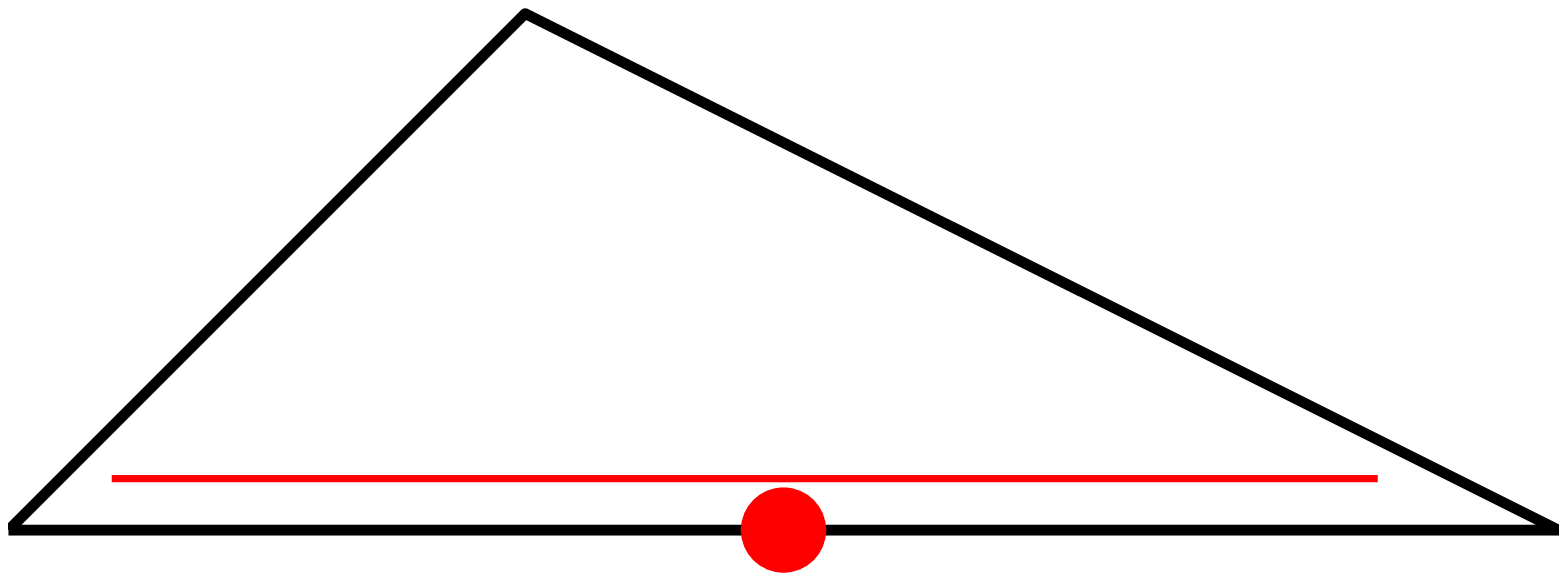} \quad
 \includegraphics[width=35mm]{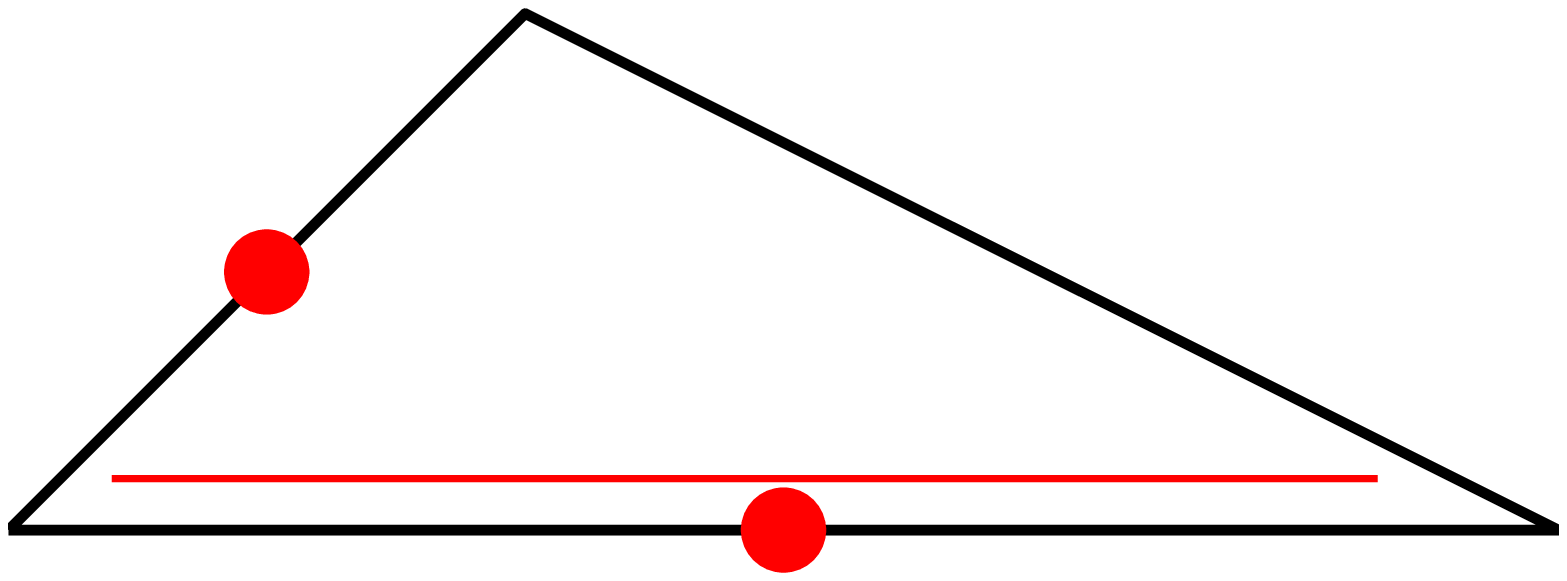} \quad
 \includegraphics[width=35mm]{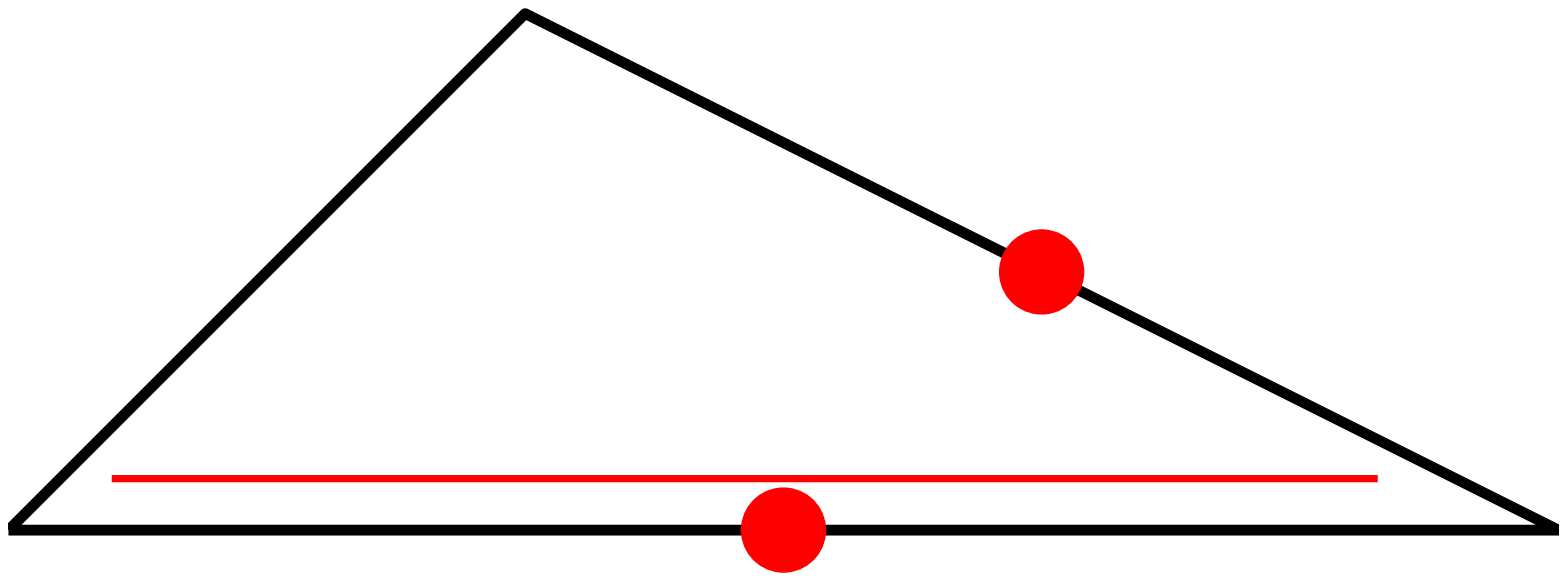} \quad
 \includegraphics[width=35mm]{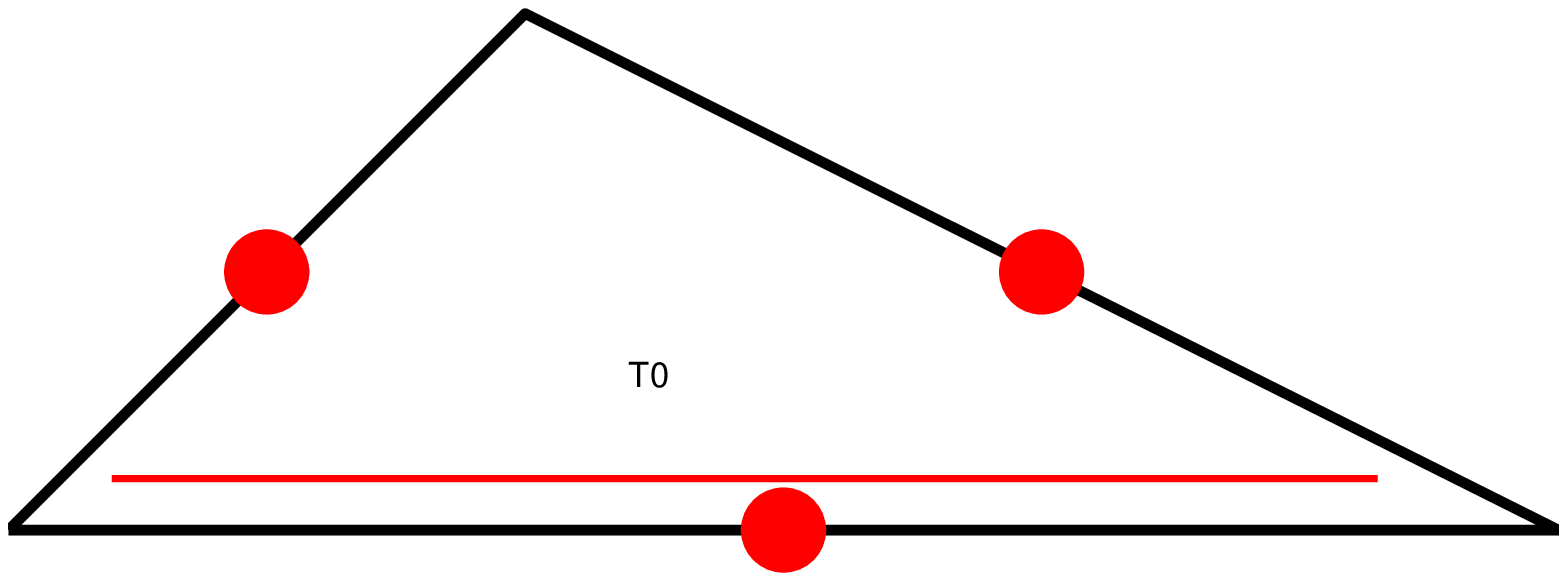} \\
 \includegraphics[width=35mm]{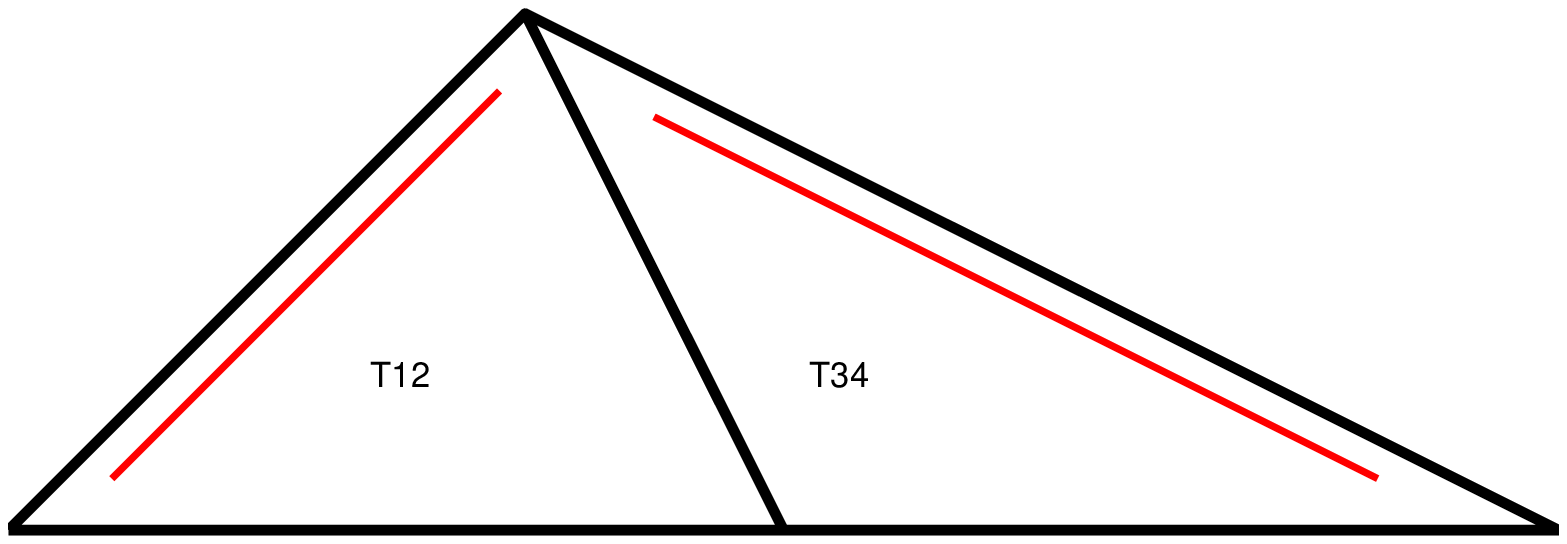} \quad
 \includegraphics[width=35mm]{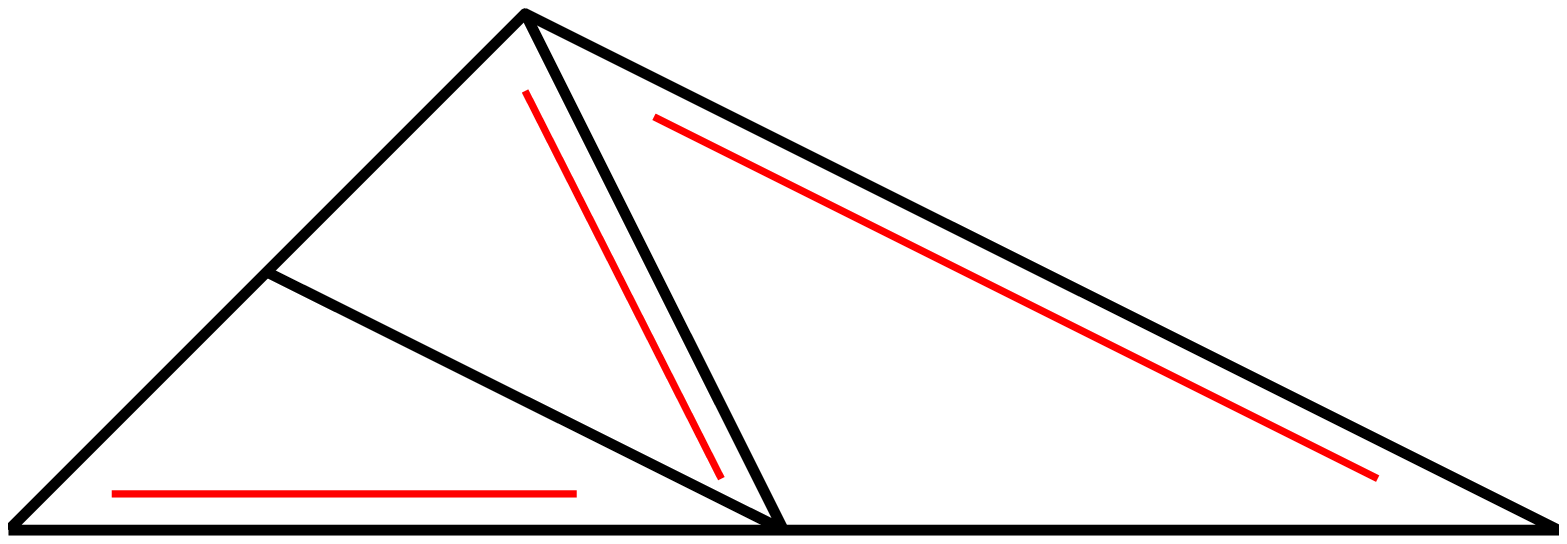}\quad
 \includegraphics[width=35mm]{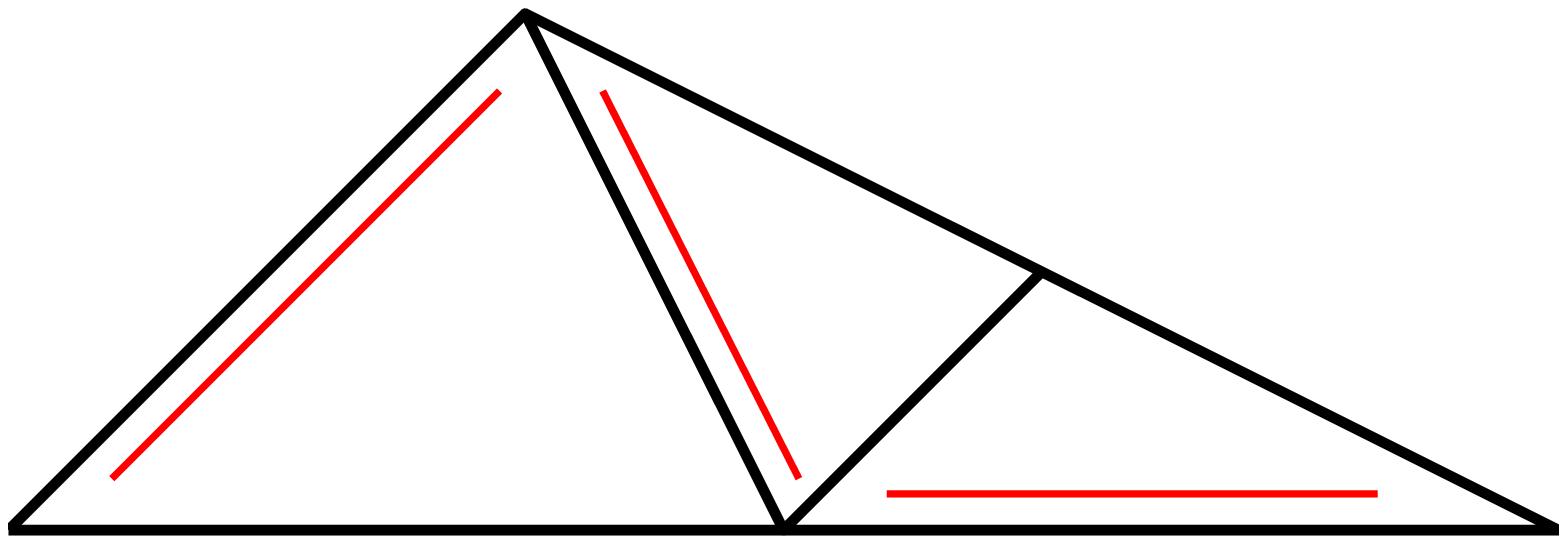}\quad
 \includegraphics[width=35mm]{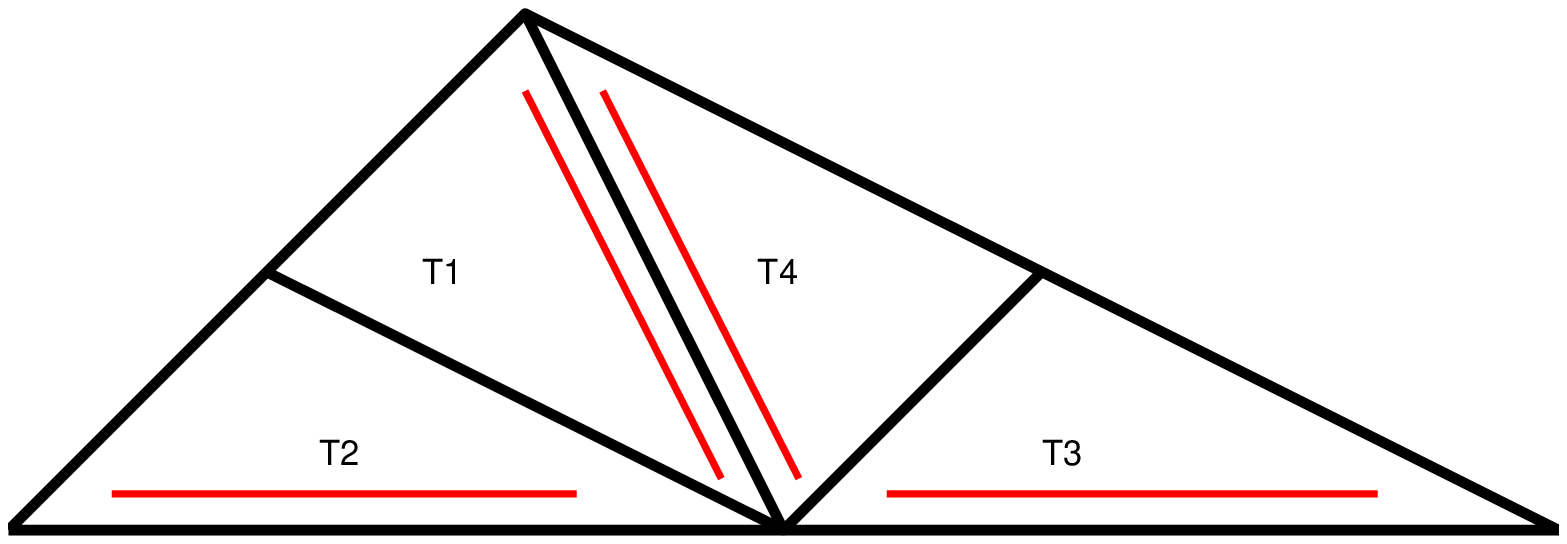}
 \caption{
 For each surface triangle $T\in\TT_\ell$ for $d=3$, there is one fixed 
 \emph{reference edge},
 indicated by the double line (left, top). Refinement of $T$ is done by bisecting
 the reference edge, where its midpoint becomes a new node. The reference
 edges of the son triangles are opposite to this newest vertex (left, bottom).
 To avoid hanging nodes, one proceeds as follows:
 We assume that certain edges of $T$, but at least the reference edge,
 are marked for refinement (top).
 Using iterated newest vertex bisection, the element is then split into
 2, 3, or 4 son triangles (bottom).}
 \label{fig:nvb:bisec}
\end{figure}

\subsection{Mesh refinement and hierarchical structure}\label{section:main:hierarchy}
\def\refine{{\tt refine}}
\def\MM{\mathcal M}
We assume that $\TT_\ell$ is obtained from an initial triangulation 
$\TT_0$ by use of bisection. For $d=2$, we employ the optimal 1D
bisection from~\cite{cmam} which guarantees $\ell$-independent $\gamma$-shape 
regularity~\eqref{eq:shaperegular}. For $d=3$, we use 2D newest vertex
bisection, see Figure~\ref{fig:nvb:bisec} as well as, e.g.,~\cite{kpp} and the references therein,
and note that $\ell$-independent $\gamma$-shape regularity~\eqref{eq:shaperegular}
is guaranteed.
We suppose that $\TT_{\ell+1} = \refine(\TT_{\ell};\MM_\ell)$ for all $\ell\in\N_0$, where $\refine(\cdot)$
abbreviates the mesh-refinement strategies mentioned and $\MM_\ell\subseteq\TT_\ell$ is an arbitrary set of marked elements. The mesh $\TT_{\ell+1}$ is then
the coarsest regular triangulation of $\Gamma$ such that all marked 
elements $T\in\MM_\ell$ have been bisected. 

Note that $\NN_\ell\subseteq\NN_{\ell+1}$, since $\TT_{\ell+1}$ is obtained by
local refinement of $\TT_\ell$.
To provide an efficient additive Schwarz scheme on locally refined meshes, 
we define
\begin{align}\label{eq:defNtilde}
  \widetilde\NN_0 = \NN_0
  \quad\text{and}\quad
  \widetilde\NN_\ell := \NN_\ell\backslash \NN_{\ell-1} \cup
  \set{z\in\NN_\ell\cap\NN_{\ell-1}}{\omega_\ell(z) \subsetneqq \omega_{\ell-1}(z)}
  \quad\text{for }\ell\ge1, 
\end{align}
i.e., $\widetilde\NN_\ell$  contains all \emph{new} nodes plus 
their immediate neighbours, see also Figure~\ref{fig:Ntilde}.
We stress that smoothing on all nodes $z\in\NN_\ell$ will lead to suboptimal
schemes, whereas smoothing on the nodes $z\in\widetilde\NN_\ell$ will prove to
be optimal.
For $\ell\ge0$ and $z\in\NN_\ell$, we define the subspaces 
\begin{align}\label{eq:defXtilde}
 \widetilde\XX^\ell &:= \linhull\set{\eta_z^\ell}{z\in\widetilde\NN_\ell}
 \quad\text{and}\quad
 \XX_z^\ell := \linhull\{\eta_z^\ell\}.
\end{align}

Finally, and as in~\cite{wuchen06}, we define the level of a node $z\in\NN_\ell$ by
\begin{align}\label{eq:level}
  \level_\ell(z) := \left\lfloor \frac{ \log(h_\ell(z) / \widehat h_0)}{\log(1/2)}
  \right\rfloor \in\N_0,
\end{align}
where $\widehat h_0 := \max\limits_{T\in\TT_0} h_\ell(T)$ and
$\lfloor\cdot\rfloor$ denotes the Gaussian floor function, i.e.,
$\lfloor x\rfloor = \max\set{n\in\N}{n\le x}$ for $x\in\R$.

\begin{figure}[t]
 \centering
 \includegraphics[width=65mm]{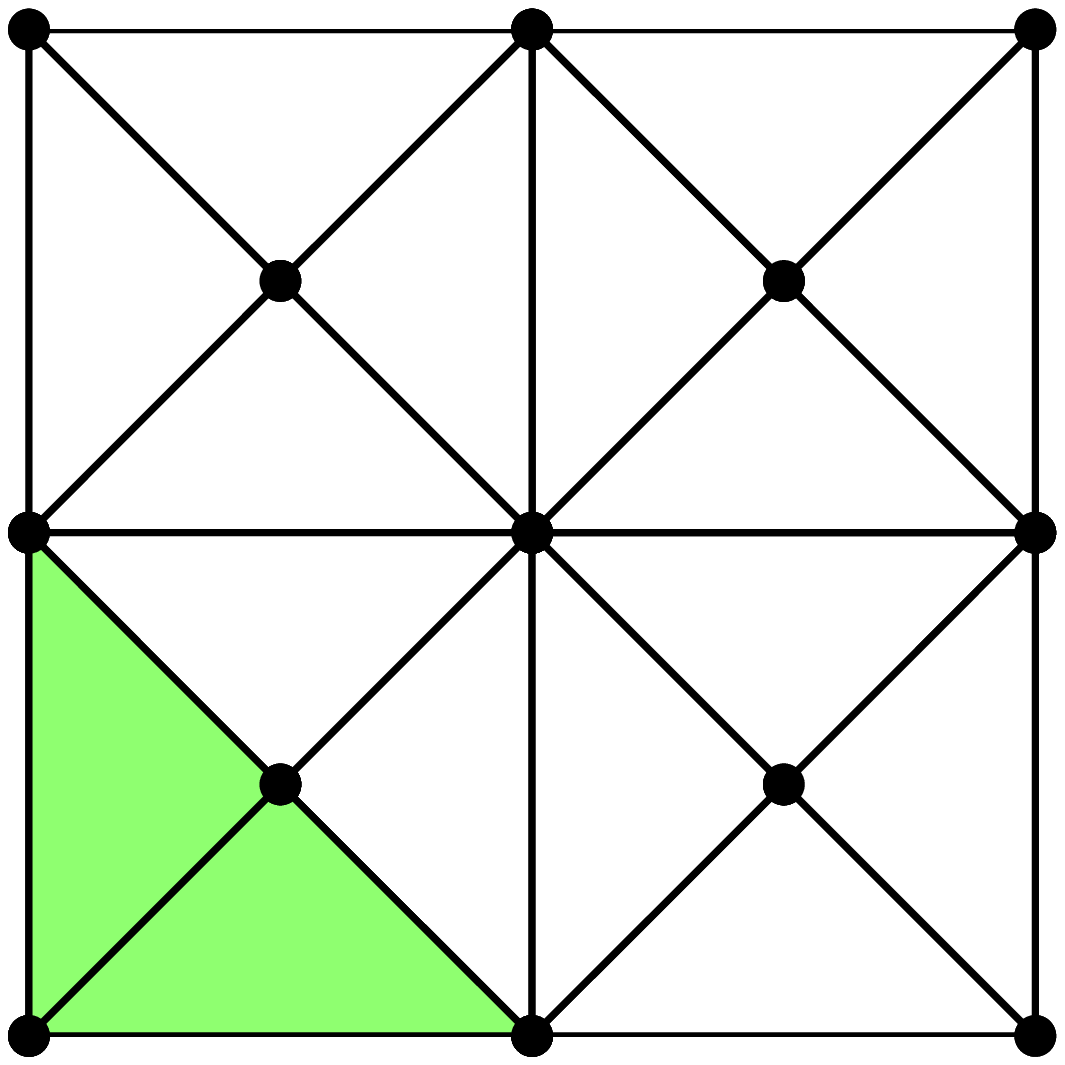} \quad
 \includegraphics[width=65mm]{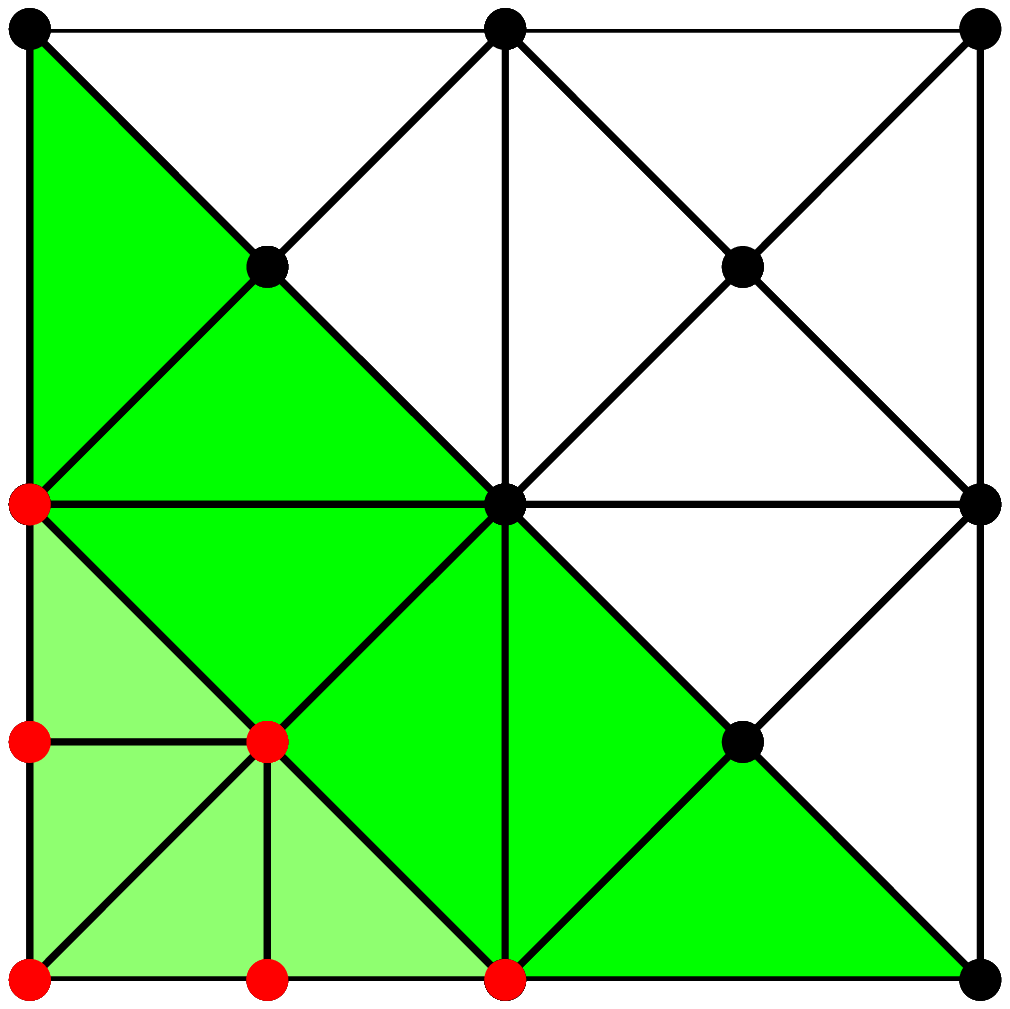} 
 \caption{
 The left figure shows a mesh $\TT_{\ell-1}$, where the two elements in the lower left corner
 are marked for refinement (green). Bisection of these two elements provides the mesh
 $\TT_\ell$ (right), where two \textit{new nodes} are created.
 The set $\widetilde\NN_\ell$ consists of these new nodes plus their 
 \textit{immediate neighbours} (red). 
 The union of the support of basis functions in
 $\widetilde\XX^\ell = \linhull\{\eta_z^\ell\,:\, z\in\widetilde\NN_\ell \}$
 is given by the light- and dark-green areas in the right figure.
 }
 \label{fig:Ntilde}
\end{figure}

\subsection{Local multilevel diagonal preconditioner (LMLD)}\label{section:main:precond}
For any $L\in\N_0$, we aim to derive a preconditioner
$(\widetilde\BB^L)^{-1}$ for the Galerkin matrix 
$\AA^L$ from~\eqref{eq:galerkinmatrix} with respect to the space 
$\XX^L$ and the basis $\set{\eta_z^L}{z\in\NN_L}$.

\def\II{\mathcal I}
For all $0\le \ell\le L$, let $\widetilde\AA^\ell$ be the Galerkin matrix 
with respect to $\widetilde\XX^\ell$ and the associated basis 
$\set{\eta_z^\ell}{z\in\widetilde\NN_\ell}$.
Let $\widetilde\DD^\ell$ be the diagonal matrix 
$\widetilde\DD^\ell_{jk}:=\widetilde\AA^\ell_{jj}\delta_{jk}$ with 
Kronecker's delta $\delta_{jk}$. 
Define $\widetilde N_\ell = \#\widetilde\NN_\ell$ and $N_L = \#\NN_L$. 
We consider the embedding $\widetilde\II^\ell:\widetilde\XX^\ell \to \XX^L$,
i.e.,\ the formal identity. 
Let $\widetilde{\bf I}^\ell\in \R^{N_L\times \widetilde N_\ell}$ be the
matrix representation of the operator $\widetilde\II^\ell$ with
respect to the bases of $\widetilde\XX^\ell$ resp.\ $\XX^L$. With this notation,
we consider the matrix
\begin{align}\label{eq:defBB}
  (\widetilde\BB^L)^{-1} = \sum_{\ell=0}^L 
  \widetilde{\bf I}^\ell\,(\widetilde\DD^\ell)^{-1}(\widetilde{\bf I}^\ell)^T.
\end{align}
Instead of solving  $\AA^L \xx^L = \bb^L$, we now consider the preconditioned linear system
\begin{align}\label{eq:precondSys}
  (\widetilde\BB^L)^{-1}\AA^L \xx^L =
  (\widetilde\BB^L)^{-1}\bb^L.
\end{align}
As is shown in Section~\ref{sec:as}, $(\widetilde\BB^L)^{-1}$ 
corresponds to a diagonal scaling on each \textit{local} subspace $\widetilde\XX^\ell$.
Therefore, this type of preconditioner is called~\textit{local multilevel
diagonal scaling}.

For a symmetric and positive definite matrix $\CC \in \R^{N_L\times N_L}$, we denote by $\dual\cdot\cdot_\CC =
\dual{\CC\cdot}\cdot_2$ the induced scalar product on $\R^{N_L}$, and by $\norm\cdot\CC$ the
corresponding norm resp.\ induced matrix norm.
Here $\dual\cdot\cdot_2$ denotes the Euclidean inner product
on $\R^{N_L}$.
We define the condition number $\cond_\CC$ of a matrix $\AA\in\R^{N_L\times N_L}$ as
\begin{align}
  \cond_\CC(\AA) := \norm{\AA}\CC\norm{\AA^{-1}}\CC.
\end{align}

The main result of this work reads as follows.
\begin{theorem}\label{thm:main}
The matrix $(\widetilde\BB^L)^{-1}$ is symmetric and positive definite with respect to
$\dual\cdot\cdot_2$, and $\PPAStilde^L:
= (\widetilde\BB^L)^{-1}\AA^L$ is symmetric and positive definite with respect to 
$\dual\cdot\cdot_{\widetilde\BB^L}$.
Moreover, the minimal and maximal eigenvalues of the matrix
$\PPAStilde^L$ satisfy
\begin{align}
  c \le \evmin(\PPAStilde^L) \quad\text{and}\quad
  \evmax(\PPAStilde^L) \leq C,
\end{align}
where the constants $c,C>0$ depend only on $\Gamma$ and the initial
triangulation $\TT_0$.
In particular, the condition number of the additive Schwarz matrix
$\PPAStilde^L$ is
$L$-independently bounded by
\begin{align}
  \cond_{\widetilde\BB^L}(\PPAStilde^L) \leq C/c.
\end{align}
\end{theorem}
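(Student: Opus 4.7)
The plan is to cast the preconditioner $(\widetilde\BB^L)^{-1}$ as an additive Schwarz preconditioner associated with the subspace decomposition
$
 \XX^L = \sum_{\ell=0}^L \sum_{z\in\widetilde\NN_\ell} \XX_z^\ell,
$
where each $\XX_z^\ell = \linhull\{\eta_z^\ell\}$ is one-dimensional. The diagonal matrix $\widetilde\DD^\ell$ means that within each $\widetilde\XX^\ell$ we are doing diagonal scaling, which amounts to decomposing further into the nodal 1D subspaces $\XX_z^\ell$. Symmetry and positive definiteness of $(\widetilde\BB^L)^{-1}$ are immediate from the structure $\sum_\ell \widetilde{\bf I}^\ell (\widetilde\DD^\ell)^{-1}(\widetilde{\bf I}^\ell)^T$, and symmetry of $\PPAStilde^L$ with respect to $\dual\cdot\cdot_{\widetilde\BB^L}$ is standard. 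By the abstract additive Schwarz lemma, the bound on $\cond_{\widetilde\BB^L}(\PPAStilde^L)$ will follow from two estimates phrased in the energy norm $\enorm\cdot$: a stable splitting bound (upper bound on the smallest eigenvalue of $\PPAStilde^L$) and a boundedness / finite-overlap bound (upper bound on the largest eigenvalue).

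The boundedness estimate is the easier half. For any admissible decomposition $v = \sum_\ell \sum_{z\in\widetilde\NN_\ell} v_z^\ell$ with $v_z^\ell\in\XX_z^\ell$, one needs $\enorm{v}^2 \lesssim \sum_{\ell,z}\enorm{v_z^\ell}^2$. Within a single level $\ell$, this follows from a coloring argument since the supports $\omega_\ell(z)$ overlap with $\gamma$-controlled multiplicity. Across levels, I would use a strengthened Cauchy--Schwarz inequality of the form $|\edual{v_k}{v_\ell}| \lesssim q^{|k-\ell|}\enorm{v_k}\enorm{v_\ell}$ for some $q<1$, using that the hypersingular operator is a pseudodifferential operator of order one and that hat functions on a coarse and a fine scale are nearly orthogonal in $\edual\cdot\cdot$; the decay exponent is obtained via a duality argument against the Sobolev norm $H^{1/2}(\Gamma)$, using standard inverse and approximation estimates from the Scott--Zhang projection theory.

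The main obstacle is the stable splitting. Given $v\in\XX^L$, I plan to construct the decomposition via a telescoping sequence of Scott--Zhang type quasi-interpolants $J_\ell:H^{1/2}(\Gamma)\to\XX^\ell$, set $v_0:=J_0 v$ and $v_\ell:=(J_\ell-J_{\ell-1})v$ for $\ell\ge1$, and finally split $v_\ell=\sum_{z\in\widetilde\NN_\ell}\xx_z^\ell\eta_z^\ell$ nodally. Two things must be checked. First, that $v_\ell\in\widetilde\XX^\ell$: this is a locality property and is where the specific choice of $\widetilde\NN_\ell$ in~\eqref{eq:defNtilde} is essential — on every node $z\in\NN_\ell\cap\NN_{\ell-1}$ with $\omega_\ell(z)=\omega_{\ell-1}(z)$ the projection values satisfy $(J_\ell v)(z)=(J_{\ell-1}v)(z)$, so these nodes drop out of $v_\ell$. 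This mimics the FEM construction of Wu--Chen~\cite{wuchen06} but must be done with Scott--Zhang nodes placed so as to respect nesting. Second, the stability estimate $\sum_\ell\enorm{v_\ell}^2\lesssim\enorm{v}^2$ has to be proved in the fractional norm $H^{1/2}(\Gamma)$, which is non-local; my plan is to interpolate between the $L^2$-stability bound $\sum_\ell\|h_\ell^{-1/2}v_\ell\|_{L^2}^2\lesssim|v|_{H^{1/2}}^2$ and the $H^1$-level telescoping, using the K-method of real interpolation together with the level function $\level_\ell(z)$ from~\eqref{eq:level} as a weight to control the geometric accumulation of levels at a fixed location. The final step, reducing from $\enorm{v_\ell}^2$ on $\widetilde\XX^\ell$ to the weighted sum $\sum_{z\in\widetilde\NN_\ell}|\xx_z^\ell|^2\widetilde\DD^\ell_{zz}$, is standard and follows from $\widetilde\AA^\ell \simeq \widetilde\DD^\ell$ (spectral equivalence on each fixed level, with constants only depending on $\gamma$), proved via hat-function scaling estimates $\enorm{\eta_z^\ell}^2\simeq h_\ell(z)^{d-2}$ and bounded overlap. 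Combining these yields the two-sided spectral bound and hence the uniform condition number estimate.
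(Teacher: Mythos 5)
Your overall architecture matches the paper's: the preconditioner is recast as the additive Schwarz operator $\PAStilde^L=\sum_{\ell}\sum_{z\in\widetilde\NN_\ell}\prec_z^\ell$ associated with the nodal subspace decomposition, the algebraic facts (symmetry, positive definiteness, $\widetilde\AA^\ell\simeq\widetilde\DD^\ell$ on each level) are soft, and the two-sided spectral bound in Proposition~\ref{prop:as} is what drives everything. The stable splitting you propose for the lower bound --- Scott--Zhang telescoping $v_\ell=(J_\ell-J_{\ell-1})v$ followed by nodal splitting, with the property $(J_\ell-J_{\ell-1})v\in\widetilde\XX^\ell$ enforced by choosing $T_z^\ell=T_z^{\ell-1}$ for non-refined nodes --- is exactly what the paper does. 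So the broad plan is right.

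However, there is a genuine gap in the upper bound as you have phrased it. You propose a strengthened Cauchy--Schwarz estimate $|\edual{v_k}{v_\ell}|\lesssim q^{|k-\ell|}\enorm{v_k}\enorm{v_\ell}$ indexed by the \emph{refinement steps} $k,\ell$. On locally refined triangulations, the refinement step $\ell$ does not control the length scale: a single node $z$ can lie in $\widetilde\NN_\ell$ for arbitrarily many consecutive values of $\ell$ (when its patch is repeatedly bisected), and conversely, for $k\ne\ell$ the subspaces $\widetilde\XX^k$ and $\widetilde\XX^\ell$ can contain hat functions living at comparable scales and supported in the same region. In the artificial example of Section~\ref{sec:artLshape}, $L$ grows while most of the mesh is unchanged; any estimate with decay governed by $|k-\ell|$ would then produce a bound growing with $L$, exactly like the GMLD preconditioner of Theorem~\ref{thm:gmld}. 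The crucial step in the paper is to regroup all pairs $(\ell,z)$ by the \emph{level} $m=\level_\ell(z)$ from~\eqref{eq:level}, define $\widetilde\QQ_m^L:=\sum_{\ell}\sum_{z\in\widetilde\NN_\ell,\,\level_\ell(z)=m}\prec_z^\ell$, and prove the strengthened Cauchy--Schwarz estimate~\eqref{eq:hypscs} between $\widetilde\QQ_m^L$ and the auxiliary uniformly-refined spaces $\widehat\XX^k$. Making this work requires the auxiliary hierarchy $\widehat\TT_m$, Lemma~\ref{lem:hatunif} (placing $\eta_z^\ell$ in $\widehat\XX^{m+1}$), and in particular the counting Lemma~\ref{lem:Kbound}, which bounds $\#\widetilde\KK_m(z)$ uniformly and is precisely what the LMLD choice of $\widetilde\NN_\ell$ buys over GMLD. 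Your sketch invokes the level function only for the lower bound; without the level-based regrouping and the counting lemma, the decay argument breaks. A secondary remark: for the lower bound you propose to establish $H^{1/2}$-stability by real interpolation between $L^2$ and $H^1$, whereas the paper instead passes directly through the multiscale characterization~\eqref{eq:h12normest4} of $\|\cdot\|_{H^{1/2}(\Gamma)}$ via the $L^2$-orthogonal projections $\widehat\Pi_m$ onto $\widehat\XX^m$ (Lemma~\ref{lem:h12normest}), combined with the pointwise Scott--Zhang estimate in Lemma~\ref{lem:szest}; the interpolation route is not obviously wrong, but the level-regrouping and the counting lemma are again needed to collapse the sum over $(\ell,z)$ to a sum over levels, so the same missing ingredient reappears.
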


The proof of Theorem~\ref{thm:main} is given in Section~\ref{section:proof} below,
and we focus on the relevant application first:
Consider an iterative solution method, such as the GMRES method~\cite{saad}
or the CG method~\cite{saad03} to solve~\eqref{eq:precondSys}, where
the relative reduction of the $j$-th residual depends only on the condition number
$\cond_{\widetilde\BB}(\PPAStilde^L)$. Then, Theorem~\ref{thm:main} proves that the iterative
scheme, together with the preconditioner $\widetilde\BB^L$ is efficient in the sense
that the number of iterations to reduce the relative residual under the
tolerance $\eps$ is bounded by a constant that depends only on $\Gamma$, and the initial triangulation
$\TT_0$, but is completely independent of the current triangulation $\TT_L$.

\subsection{Global multilevel diagonal preconditioner (GMLD)}\label{section:main:gmld}
In addition to the new local multilevel diagonal preconditioner from
Section~\ref{section:main:precond}, we consider a \textit{global multilevel diagonal} 
preconditioner, where we use all nodes $z\in\NN_\ell$ of the
triangulation $\TT_\ell$ instead of only $\widetilde\NN_\ell$ to construct the
preconditioner.
Such an approach has, for instance, been investigated in~\cite{mai09} for 2D
hypersingular integral equations with graded meshes
on an open curve. In the latter work it is proved that the maximal eigenvalue
of the associated additive Schwarz operator is bounded up to some constant by
$L^2$.
With the new analytical tools developed here, we improve this estimate and show that the maximal
eigenvalue can be bounded linearly in $L$. Our result holds for all $\gamma$-shape
regular meshes on open and closed boundaries for $d=2,3$.
Moreover, this bound is sharp, as it is confirmed in the numerical example from
Section~\ref{sec:artLshape}.

Let $\AA^\ell$ denote the Galerkin matrix with respect to $\XX^\ell$
and the basis $\{\eta_z^\ell\,:\, z\in\NN_\ell\}$. We consider the diagonal
$\DD^\ell$ of this matrix, i.e. $\DD_{jk}^\ell := \delta_{jk} \AA_{jj}^\ell$.
Let $\II^\ell : \XX^\ell \to \XX^L$ denote the canonical embedding and let
$\mathbf{I}^\ell$ denote its matrix representation with respect to the nodal
basis of $\XX^\ell$ and $\XX^L$.
We define the global multilevel preconditioner $(\BB^L)^{-1}$ by
\begin{align}
  (\BB^L)^{-1} := \sum_{\ell=0}^L  \mathbf{I}^\ell
  (\DD^\ell)^{-1} (\mathbf{I}^\ell)^T.
\end{align}
\begin{theorem}\label{thm:gmld}
The matrix $(\BB^L)^{-1}$ is symmetric and positive definite with respect to
$\dual\cdot\cdot_2$, and $\PPAS^L:
= (\BB^L)^{-1}\AA^L$ is symmetric and positive definite with respect to $\dual\cdot\cdot_{\BB^L}$.
Moreover, the minimal and maximal eigenvalues of the matrix $\PPAS^L$ satisfy
\begin{align}
 c \le \evmin(\PPAS^L) \quad\text{and}\quad \evmax(\PPAS^L) \leq C (L+1),
\end{align}
where the constants $c,C>0$ depend only on $\Gamma$ and the initial
triangulation $\TT_0$, but are independent of the level $L$.
In particular, the condition number of the additive Schwarz matrix $\PPAS^L$ is
bounded by
\begin{align}
  \cond_{\BB^L}(\PPAS^L) \leq (L+1)\, C/c.
\end{align}
\end{theorem}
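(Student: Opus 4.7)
The plan is to identify $\PPAS^L$ with the abstract additive Schwarz operator associated with the decomposition $\XX^L = \sum_{\ell=0}^L \sum_{z \in \NN_\ell} \XX_z^\ell$ and the energy scalar product $\edual{\cdot}{\cdot}$. By the standard correspondence recalled in Section~\ref{sec:as}, $\PPAS^L = (\BB^L)^{-1}\AA^L$ is $\edual{\cdot}{\cdot}$-similar to
\begin{align*}
  T_L := \sum_{\ell=0}^L \sum_{z \in \NN_\ell} P_z^\ell,
\end{align*}
where $P_z^\ell \colon \XX^L \to \XX_z^\ell$ is the $\edual{\cdot}{\cdot}$-orthogonal projection onto $\linhull\{\eta_z^\ell\}$. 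Symmetry and positive definiteness of $(\BB^L)^{-1}$ in $\dual{\cdot}{\cdot}_2$ and of $\PPAS^L$ in $\dual{\cdot}{\cdot}_{\BB^L}$ then follow at once from this identification, so the whole task reduces to bounding the spectrum of $T_L$ in the $\edual{\cdot}{\cdot}$-inner product.

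For the lower bound $\evmin(T_L) \ge c$, I would invoke Theorem~\ref{thm:main} via monotonicity. The Lions characterisation gives
\begin{align*}
  \edual{T_L^{-1} v}{v} = \inf\Bigl\{ \sum_{\ell=0}^L \sum_{z \in \NN_\ell} \enorm{v_z^\ell}^2 \,:\, v_z^\ell \in \XX_z^\ell, \ v = \sum_{\ell, z} v_z^\ell \Bigr\}.
\end{align*}
Because $\widetilde\NN_\ell \subseteq \NN_\ell$, every LMLD-admissible decomposition is also admissible here (extending by zeros on $\NN_\ell \setminus \widetilde\NN_\ell$), and hence the GMLD infimum is bounded above by the LMLD one. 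Theorem~\ref{thm:main} therefore immediately gives $\evmin(T_L) \ge \evmin(\PPAStilde^L) \ge c$ with the same constant.

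For the upper bound, I would start from
\begin{align*}
  \evmax(T_L) = \sup_{0 \neq v \in \XX^L} \frac{1}{\enorm{v}^2}\sum_{\ell=0}^L\sum_{z \in \NN_\ell} \frac{|\edual{v}{\eta_z^\ell}|^2}{\enorm{\eta_z^\ell}^2}
\end{align*}
(the identity $\enorm{P_z^\ell v}^2 = |\edual{v}{\eta_z^\ell}|^2 / \enorm{\eta_z^\ell}^2$ being elementary) and reduce the theorem to the single-level stability estimate
\begin{align*}
  \sum_{z \in \NN_\ell} \frac{|\edual{v}{\eta_z^\ell}|^2}{\enorm{\eta_z^\ell}^2} \le C\,\enorm{v}^2 \qquad\text{for every }v \in \XX^L,
\end{align*}
with a constant $C$ depending only on $\Gamma$ and $\TT_0$ but independent of both $\ell$ and $L$. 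Summing across $\ell = 0,\dots,L$ produces the factor $L+1$.

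The main obstacle is this single-level bound. Since $\enorm{\cdot}$ is a nonlocal fractional-order norm and the nodes $z \in \NN_\ell \setminus \widetilde\NN_\ell$ correspond to mesh-sizes $h_\ell(z)$ that may vary wildly, naive $L^2$-coloring arguments do not apply. The idea is to regard $v \mapsto \sum_z \edual{v}{\eta_z^\ell}\,\eta_z^\ell/\enorm{\eta_z^\ell}^2$ as a Jacobi-type surrogate projection on level $\ell$, and to combine the scaling $\enorm{\eta_z^\ell} \simeq h_\ell(z)^{(d-2)/2}$ (up to logarithmic factors in $d=2$) with finite overlap of nodal patches from $\gamma$-shape regularity, a Faermann-type localisation of the $H^{1/2}$-seminorm, and an $H^{1/2}$-stable Scott--Zhang-type quasi-interpolation. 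This is essentially the same machinery used in Section~\ref{section:proof} for Theorem~\ref{thm:main}, but now applied to the full set $\NN_\ell$ without the filtering to $\widetilde\NN_\ell$, so no refined hierarchical argument is needed. The sharpness of the linear-in-$L$ dependence is demonstrated numerically in Section~\ref{sec:artLshape}, which shows that there is no inter-level cancellation to exploit and that the naive summation of the level-wise bounds is essentially optimal.
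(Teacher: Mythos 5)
Your argument diverges from the paper's in the treatment of $\evmax$: you group the projections by triangulation index $\ell$ and bound each single-level Jacobi operator $Q^\ell := \sum_{z\in\NN_\ell}\prec_z^\ell$ uniformly, then sum the $L+1$ terms, whereas the paper's proof in Section~\ref{section:proof:gmld} groups by the geometric level $m=\level_\ell(z)$ of the nodes, proves a strengthened Cauchy--Schwarz inequality with decay $2^{-(m+1-k)}$ between $\QQ_m^L$ and the frequency slices $\widehat\XX^k$ (Lemma~\ref{lem:hypscsGMLD}), and extracts the factor $L+1$ from the counting bound $\#\KK_m(z)\le L+1$ of Lemma~\ref{lem:KboundGMLD}. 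Your route is valid and arguably simpler for GMLD taken in isolation, since it avoids the auxiliary uniform-mesh Galerkin projections $\widehat\gal_k$, the inter-level decay, and the Young-inequality absorption of Section~\ref{section:upper bound}. However, the entire weight of the theorem now rests on the single-level bound $\edual{Q^\ell v}{v}\le C\enorm{v}^2$ for $v\in\XX^L$, and your sketch of that step is misdirected. The mechanism that makes this bound uniform in $\ell$ and $L$ is not a Scott--Zhang interpolant (which is not $\edual\cdot\cdot$-orthogonal, so $Q^\ell(v-J_\ell v)\ne 0$) nor a Faermann-type localisation (which belongs to the stable-decomposition side), but the elementary observation that $\eta_z^\ell\in\XX^\ell$ forces $\edual{v-\gal_\ell v}{\eta_z^\ell}=0$ for the energy-orthogonal Galerkin projection $\gal_\ell$ onto $\XX^\ell$: hence $Q^\ell v=Q^\ell\gal_\ell v$, $\edual{Q^\ell v}{v}=\edual{Q^\ell\gal_\ell v}{\gal_\ell v}$, and $\enorm{\gal_\ell v}\le\enorm v$ reduces everything to $w:=\gal_\ell v\in\XX^\ell$. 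For such $w$, the estimate $\dual{\hyp w}{\eta_z^\ell}_\Gamma^2/\enorm{\eta_z^\ell}^2\lesssim h_\ell(z)\,\norm{\hyp w}{L^2(\omega_\ell(z))}^2$, finite patch overlap, and the inverse estimate $\norm{h_\ell^{1/2}\hyp w}{L^2(\Gamma)}\lesssim\norm{w}{H^{1/2}(\Gamma)}$ for $w\in\SS^1(\TT_\ell)$ on shape-regular (possibly adaptive) meshes --- exactly~\eqref{eq:scs8} together with~\cite[Corollary~2]{invest} --- close the argument, with the stabilization term handled similarly. Your lower-bound argument via monotonicity of Lions decompositions matches the paper's. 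A minor slip: the equivalence $\enorm{\eta_z^\ell}^2\simeq h_\ell(z)^{d-2}$ used in~\eqref{eq:lb2} holds without any logarithmic factor in $d=2$; the constants depend only on $\gamma$-shape regularity.
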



\section{Numerical examples}\label{sec:examples}
\noindent
In this section we consider different 2D experiments to show
the efficiency of the proposed \textit{local multilevel diagonal
preconditioner} $\widetilde\BB^L$ from~\eqref{eq:precondSys} numerically. In the 
first two experiments, we consider problems on the boundary of an L-shaped domain.
In the last experiment, we consider a problem on the slit $\Gamma =
(-1,1)\times \{0\}$.
In both cases, the exact solution is known. Moreover, for the problems under
considerations it is known, that uniform mesh-refinement will lead to suboptimal 
convergence rates, whereas adaptive refinement regains the optimal order of
convergence. Thus, the use of adaptive methods is preferable. 

For both examples, the mesh-adaptivity is driven by the ZZ-type error estimator 
proposed in~\cite{zzbem}. The resulting linear systems are solved by GMRES. We 
compare the following preconditioners  with respect to time, number of iterations, 
and condition numbers:
\begin{itemize}
\item \textbf{LMLD} \textit{local multilevel diagonal preconditioner} from~\eqref{eq:precondSys};
\item \textbf{GMLD} \textit{global multilevel diagonal preconditioner}
  from Section~\ref{section:main:gmld};
\item \textbf{HB} \textit{hierarchical basis preconditioner}, where only new 
  nodes are considered for preconditioning~\cite{tsm97}: Define $\underline\NN_\ell :=
\NN_\ell\backslash\NN_{\ell-1}$ as the set of new nodes and define
$\underline\DD^\ell$ as the diagonal matrix of the Galerkin matrix with respect
to the space $\underline\XX^\ell := \{\eta_z^\ell \,:\, z\in\underline\NN_\ell\}$.
Let $\underline\II^\ell : \underline\XX^\ell\to \XX^L$ denote the canonical
embedding with matrix representation $\underline{\mathbf I}^\ell$. The
hierarchical basis preconditioner is then given by
\begin{align}
  \BB_{\rm HB}^L := \sum_{\ell=0}^L \underline{\mathbf I}^\ell
  (\underline\DD^\ell)^{-1} (\underline{\mathbf I}^\ell)^T.
\end{align}
The preconditioned matrix reads $\PP_{\rm HB}^L := (\BB_{\rm HB}^L)^{-1}
\AA^L$.

\item \textbf{DIAG} \textit{diagonal scaling} of the Galerkin
  matrix~\cite{amt99,gm06}. The preconditioned matrix reads $\PP_{\rm diag}^L := (\DD^L)^{-1} \AA^L$.
\end{itemize}

\begin{figure}[th]
  \begin{center}
    \psfrag{x}{\tiny $x$}
    \psfrag{y}{\tiny $y$}
  \includegraphics[width=0.6\textwidth]{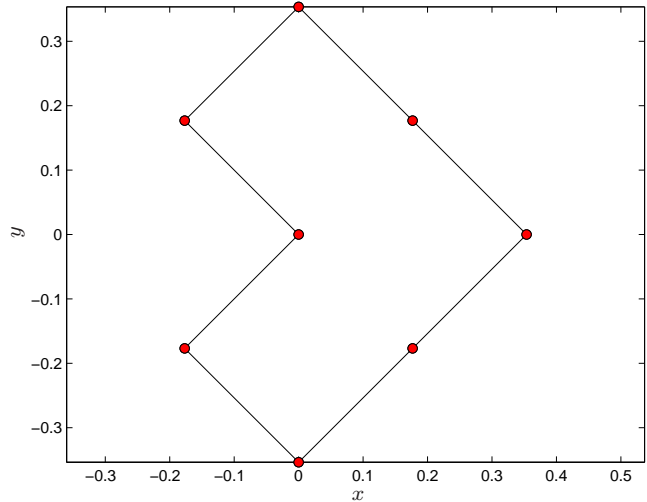}
  \caption{L-shaped domain $\Omega$ and initial triangulation $\TT_0$ with
  $\#\TT_0 = 8$.}
  \label{fig:Lshape}
\end{center}
\end{figure}
\begin{figure}[th]
  \begin{center}
    \psfrag{number of elements}[c][c]{\tiny number of elements $N$}
    \psfrag{condition numbers}[c][c]{\tiny condition numbers}
    \psfrag{OS}[0][-145]{\tiny $\OO(N \log(N\hmin{L})$}
    \psfrag{O1}{\tiny $\OO(1)$}
  \includegraphics[width=0.6\textwidth]{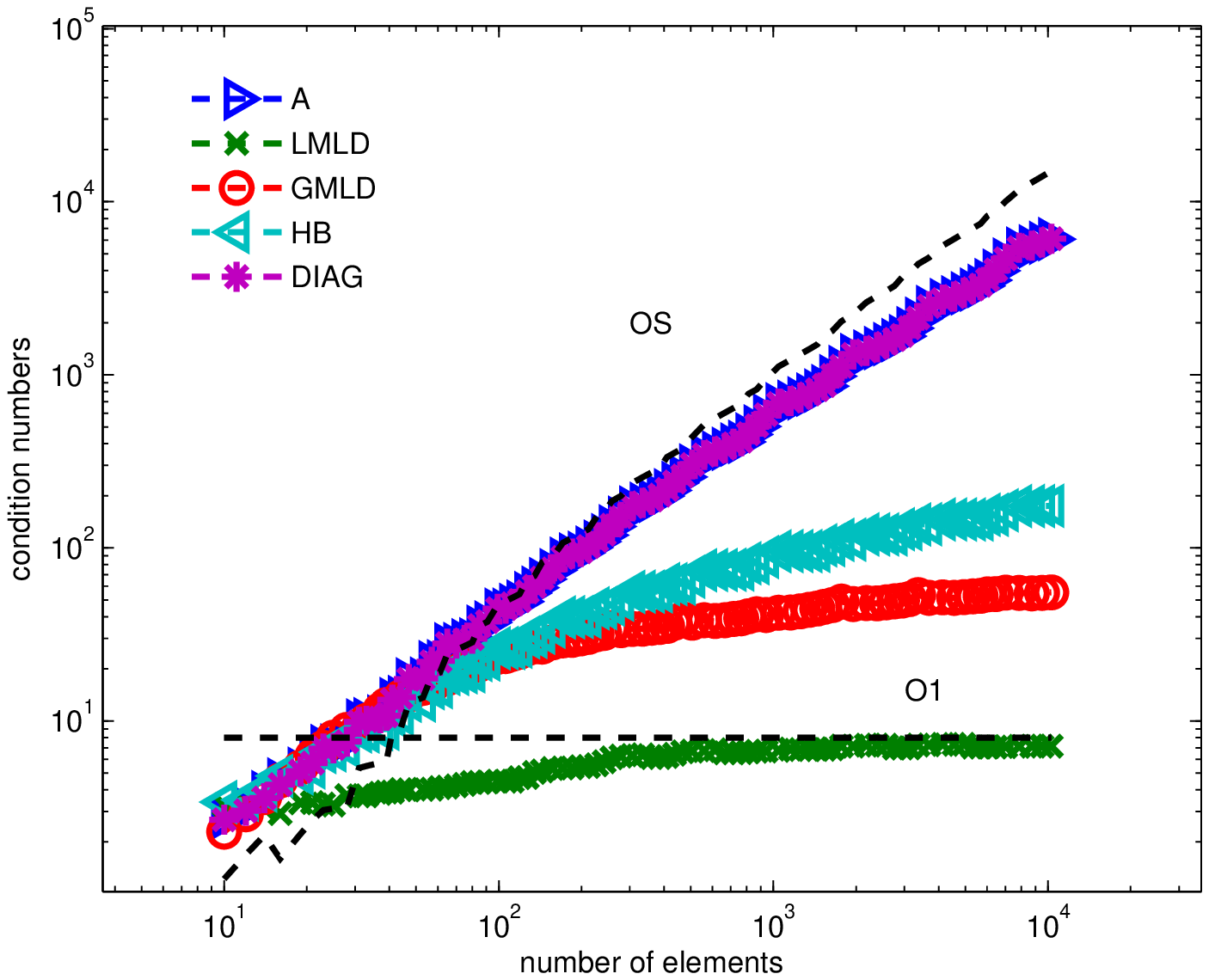}
  \caption{Condition numbers $\evmax/\evmin$ of the preconditioned
  Galerkin matrices $\PPAStilde^L$ (LMLD), $\PPAS^L$ (GMLD), $\PP_{\rm HB}^L$ (HB), 
  $\PP_{\rm diag}^L$ (DIAG), and the unpreconditioned matrix $\AA^L$ (A) for the problem from
  Section~\ref{sec:Lshape}.}
  \label{fig:cond_Lshape}
\end{center}
\end{figure}
\begin{figure}[th]
\begin{center}
    \psfrag{number of elements}[c][c]{\tiny number of elements $N$}
    \psfrag{time}[c][c]{\tiny time [sec.]}
    \psfrag{ON}{\tiny $\OO(N)$}
    \psfrag{ON2}{\tiny $\OO(N^2)$}
  \includegraphics[width=0.6\textwidth]{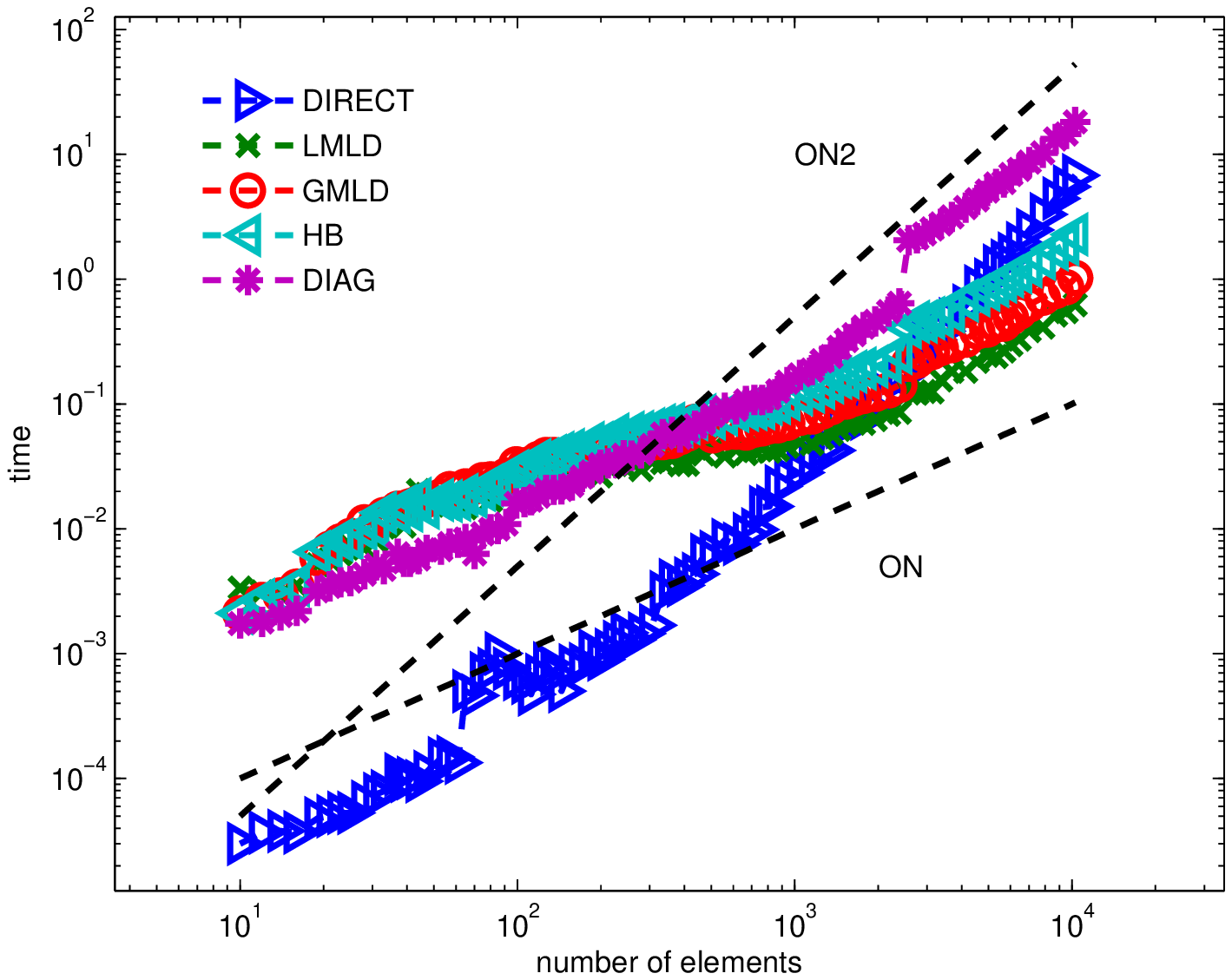}
  \caption{Computational times for GMRES algorithm to reduce relative residual
  under the bound $\eps = 10^{-8}$ for the problem from
  Section~\ref{sec:Lshape}
  and the preconditioned Galerkin systems $\PPAStilde^L$ (LMLD), $\PPAS^L$ (GMLD), $\PP_{\rm HB}^L$ (HB), 
  $\PP_{\rm diag}^L$ (DIAG).
  DIRECT stands for the direct solver.
  }
  \label{fig:time_Lshape}
\end{center}
\end{figure}
\begin{figure}[th]
\begin{center}
    \psfrag{number of elements}[c][c]{\tiny number of elements $N$}
    \psfrag{iterations}[c][c]{\tiny number of iterations}
  \includegraphics[width=0.6\textwidth]{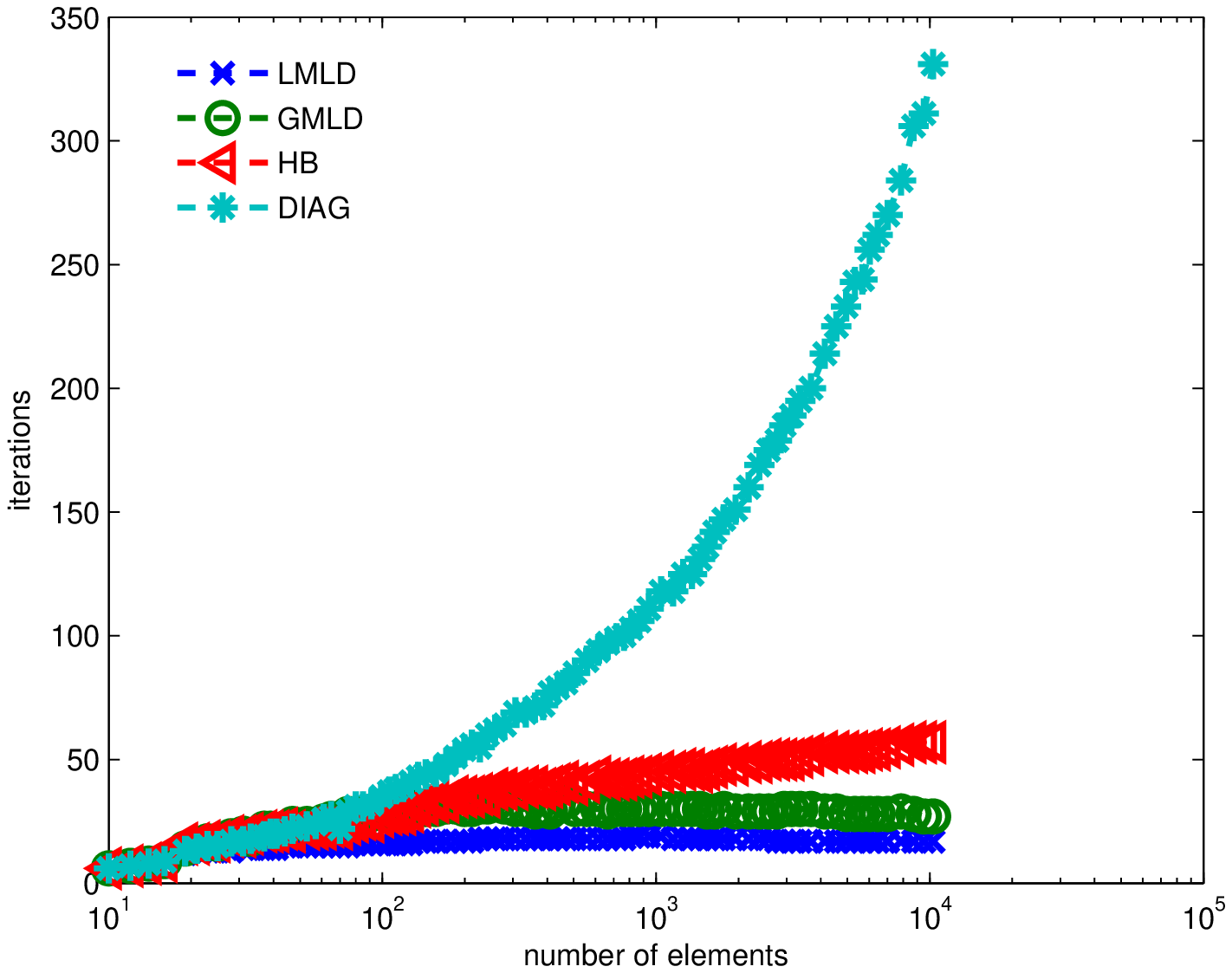}
  \caption{Number of iterations of GMRES to reduce relative residual
  under the bound $\eps = 10^{-8}$ for the problem from
  Section~\ref{sec:Lshape}
  and the preconditioned Galerkin systems $\PPAStilde^L$ (LMLD), $\PPAS^L$ (GMLD), $\PP_{\rm HB}^L$ (HB), 
  $\PP_{\rm diag}^L$ (DIAG).
  }
  \label{fig:iter_Lshape}
\end{center}
\end{figure}


\begin{figure}[th]
  \begin{center}
    \psfrag{number of elements}[c][c]{\tiny number of elements $N$}
    \psfrag{condition numbers}[c][c]{\tiny condition numbers}
    \psfrag{OS}{\tiny $\OO(N\log(N\hmin{L}))$}
    \psfrag{O1}{\tiny $\OO(1)$}
    \psfrag{OL}{\tiny $\OO(L)$}
    \psfrag{OL2}{\tiny $\OO(L^2)$}
  \includegraphics[width=0.6\textwidth]{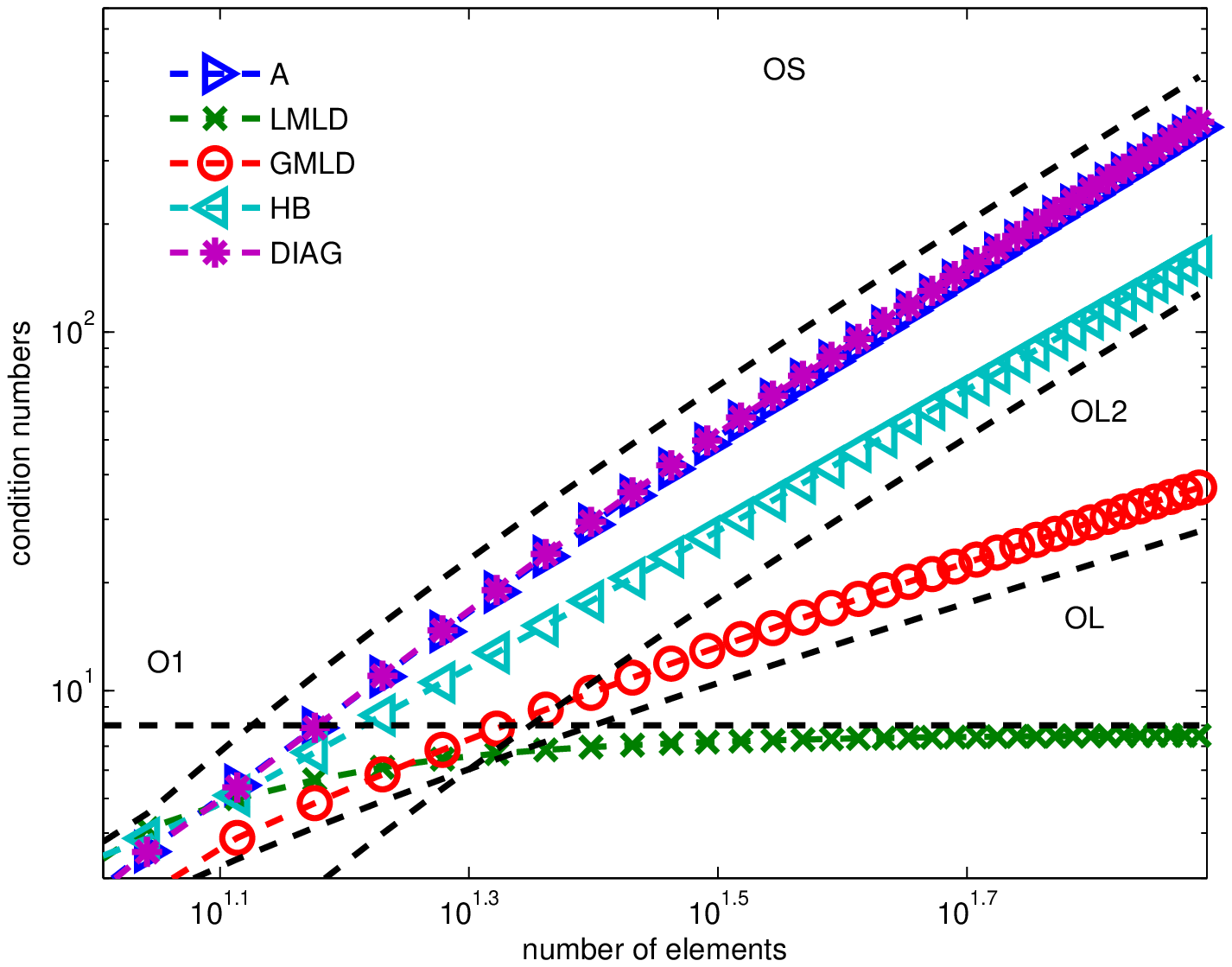}
  \caption{Condition numbers $\evmax/\evmin$ of the preconditioned
  Galerkin matrices $\PPAStilde^L$ (LMLD), $\PPAS^L$ (GMLD), $\PP_{\rm HB}^L$ (HB), 
  $\PP_{\rm diag}^L$ (DIAG), and the unpreconditioned matrix $\AA^L$ (A) for the problem from
  Section~\ref{sec:artLshape}.}
  \label{fig:cond_artLshape}
\end{center}
\end{figure}


\begin{figure}[th]
  \begin{center}
    \psfrag{number of elements}[c][c]{\tiny number of elements $N$}
    \psfrag{condition numbers}[c][c]{\tiny condition numbers}
    \psfrag{OS}[0][-145]{\tiny $\OO(N \log(N\hmin{L}))$}
    \psfrag{O1}{\tiny $\OO(1)$}
  \includegraphics[width=0.6\textwidth]{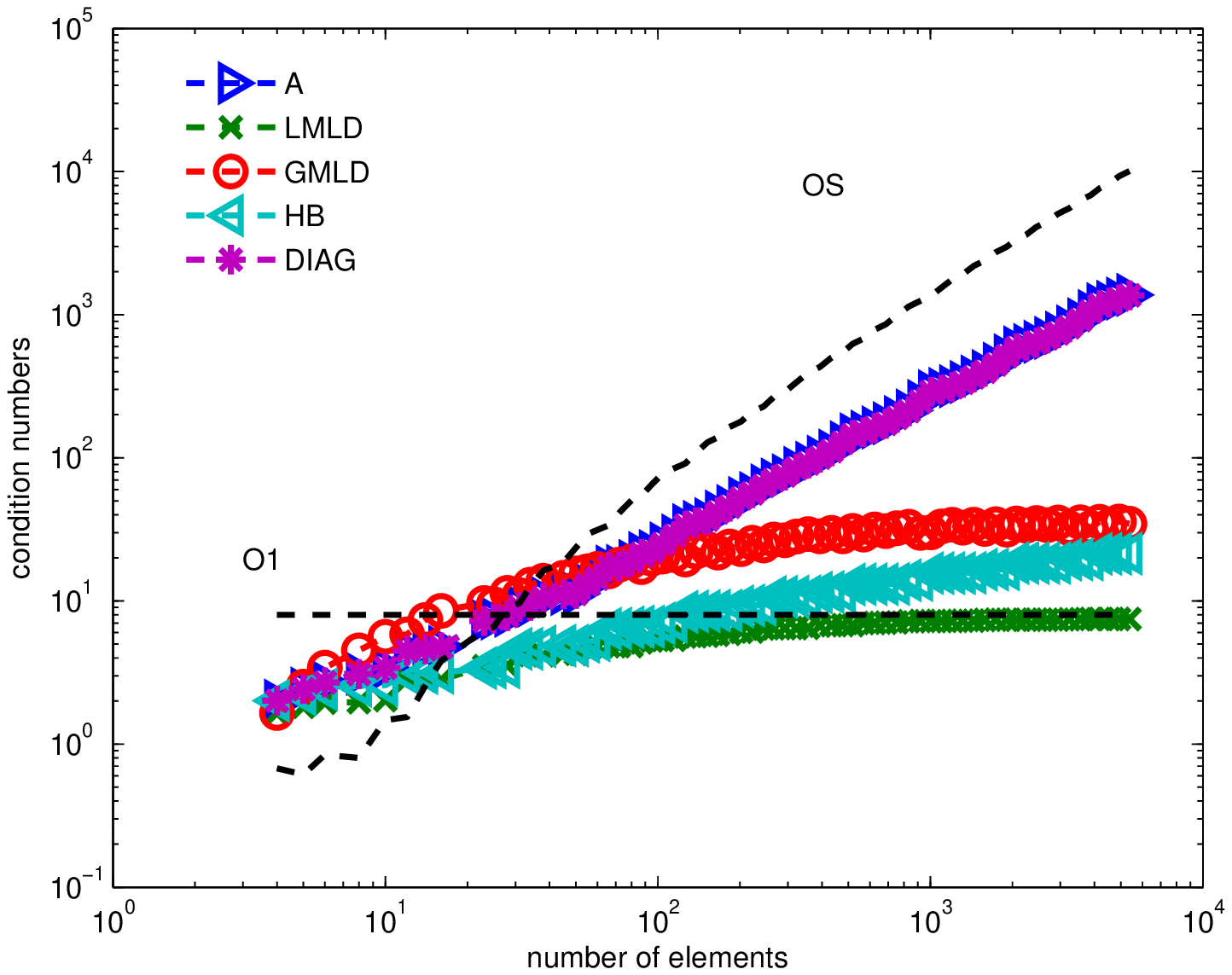}
  \caption{Condition numbers $\evmax/\evmin$ of the preconditioned
  Galerkin matrices $\PPAStilde^L$ (LMLD), $\PPAS^L$ (GMLD), $\PP_{\rm HB}^L$ (HB), 
  $\PP_{\rm diag}^L$ (DIAG), and the unpreconditioned matrix $\AA^L$ (A) for the problem from
  Section~\ref{sec:slit}.}
  \label{fig:cond_slit}
\end{center}
\end{figure}
\begin{figure}[th]
\begin{center}
    \psfrag{number of elements}[c][c]{\tiny number of elements $N$}
    \psfrag{time}[c][c]{\tiny time [sec.]}
    \psfrag{ON}{\tiny $\OO(N)$}
    \psfrag{ON2}{\tiny $\OO(N^2)$}
  \includegraphics[width=0.6\textwidth]{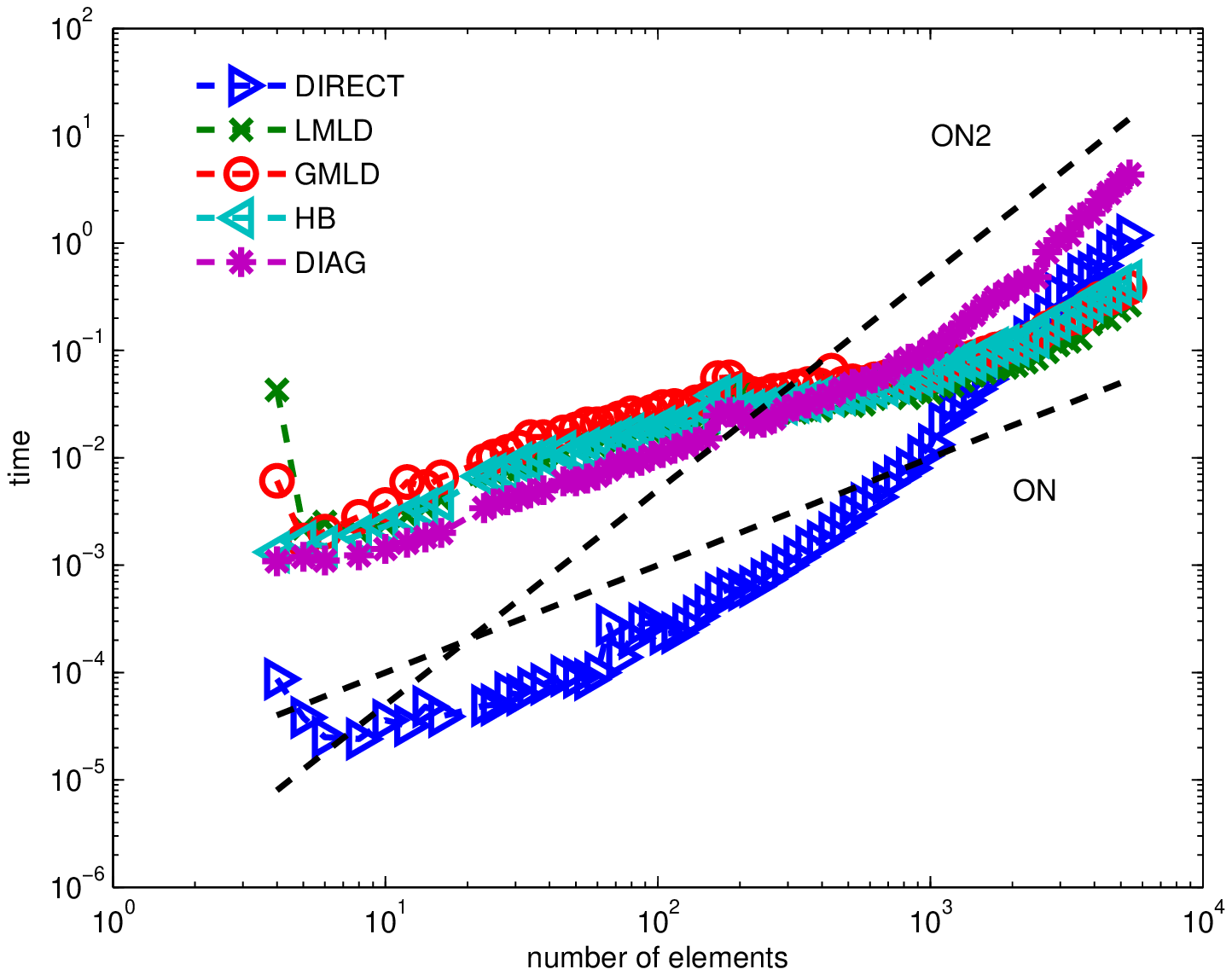}
  \caption{Computational times for GMRES algorithm to reduce relative residual
  under the bound $\eps = 10^{-8}$ for the problem from
  Section~\ref{sec:slit}
  and the preconditioned Galerkin systems $\PPAStilde^L$ (LMLD), $\PPAS^L$ (GMLD), $\PP_{\rm HB}^L$ (HB), 
  $\PP_{\rm diag}^L$ (DIAG).
  DIRECT stands for the direct solver.
  }
  \label{fig:time_slit}
\end{center}
\end{figure}
\begin{figure}[th]
\begin{center}
    \psfrag{number of elements}[c][c]{\tiny number of elements $N$}
    \psfrag{iterations}[c][c]{\tiny number of iterations}
  \includegraphics[width=0.6\textwidth]{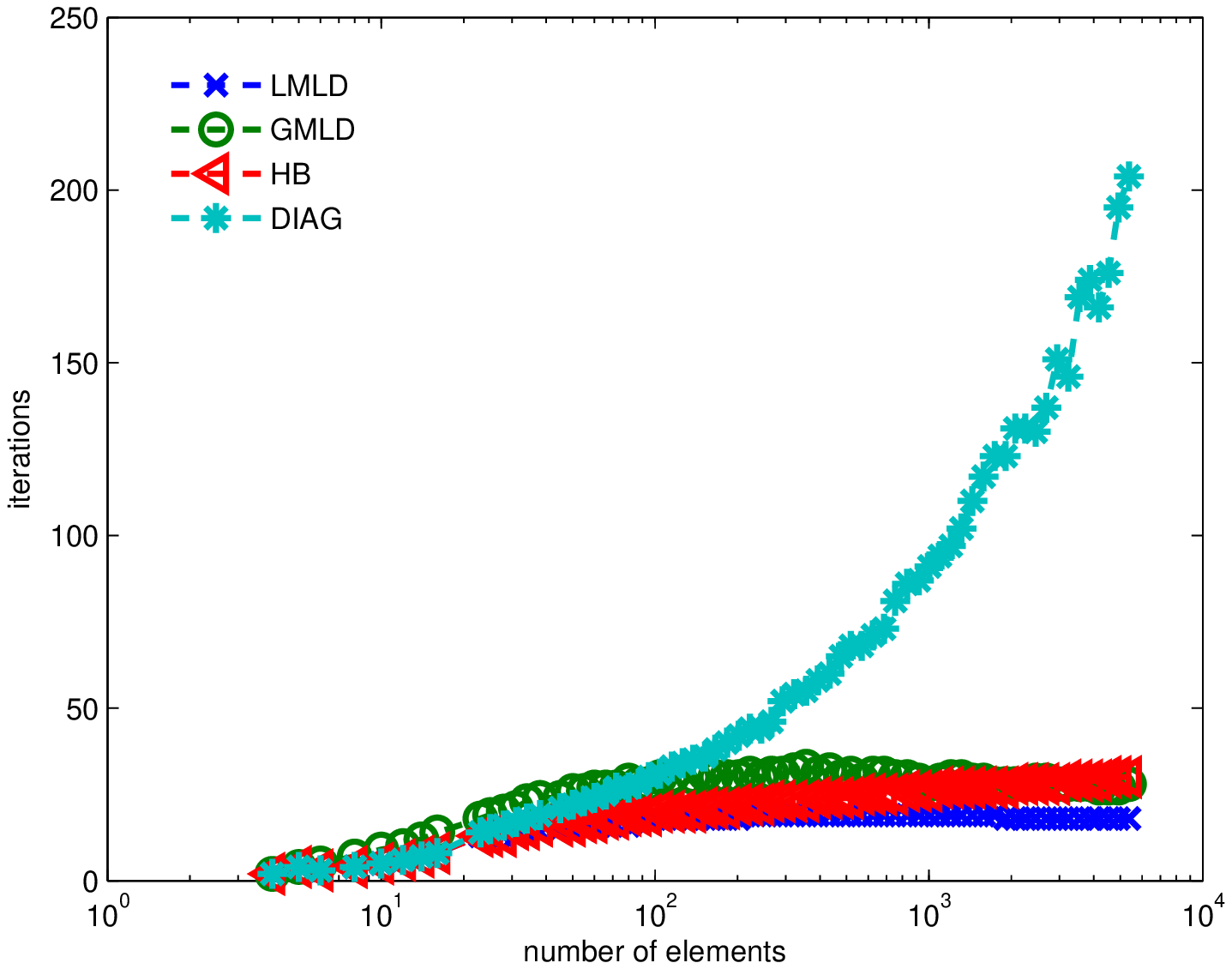}
  \caption{Number of iterations of GMRES to reduce relative residual
  under the bound $\eps = 10^{-8}$ for the problem from
  Section~\ref{sec:slit}
  and the preconditioned Galerkin systems $\PPAStilde^L$ (LMLD), $\PPAS^L$ (GMLD), $\PP_{\rm HB}^L$ (HB), 
  $\PP_{\rm diag}^L$ (DIAG).
  }
  \label{fig:iter_slit}
\end{center}
\end{figure}

The preconditioners GMLD and HB are very similar to LMLD, but lack 
effectivity, i.e., the condition number depends on the level $L$ resp.\ on the 
mesh-width function $h_L$. Since the diagonal elements of the matrix $\AA^L$ 
are essentially constant, the simple diagonal preconditioner has no significant 
effect on the condition numbers, see also~\cite{amt99}. Moreover,
the GMRES-based iterative solution is compared to a direct solver
\textbf{DIRECT} for the unpreconditioned system.

All computations were performed with MATLAB (2012b) on an Intel Core
i7-3930K machine with 32GB RAM under a x86\_64 GNU/Linux system.
For the GMRES algorithm, we use the MATLAB function {\tt gmres.m}.
For the direct solver, we use the MATLAB backslash operator.
The assembly of the boundary integral operators was done with
help of the MATLAB BEM-library \textsc{HILBERT}~\cite{hilbert}.

\subsection{Adaptive BEM for hypersingular integral equation for 2D Neumann problem on L-shaped domain}\label{sec:Lshape}
We consider the boundary $\Gamma = \partial\Omega$ of the L-shaped domain
$\Omega$, sketched in Figure~\ref{fig:Lshape}. With the 2D polar coordinates 
$(r,\varphi)$ of $x\in\R^2\backslash\{0\}$, the function
\begin{align*}
  w(x,y) = r^{2/3}\cos(2/3\varphi),
\end{align*}
satisfies the Neumann problem
\begin{align*}
 -\Delta w = 0\text{ in }\Omega,
 \quad\text{and}\quad
 \partial_n w =: \phi\text{ on }\Gamma.
\end{align*}
With the adjoint double-layer potential, we define the
right-hand side $f$ of~\eqref{eq:hypsing} as
\begin{align*}
  f = (1/2-\dlp')\partial_{\normal}w.
\end{align*}
The exact solution $u$ of~\eqref{eq:hypsing} is, up to some additive constant, 
the trace $u=w|_\Gamma$ of the potential $w$. We use the adaptive lowest-order
BEM from~\cite{zzbem} to approximate $u$. This leads to successively refined
meshes $\TT_\ell$ for $\ell=0,1,2\dots$.

In Figure~\ref{fig:cond_Lshape}, we compare the condition numbers of the different preconditioned matrices and the Galerkin matrix $\AA^L$, i.e., the
ratio between maximal and minimal eigenvalue.
As predicted by Theorem~\ref{thm:main}, the condition number of the
preconditioned matrix $\PPAStilde^L$ stays bounded, whereas the preconditioned 
systems which use GMLD resp.\ HB, are suboptimal. The condition numbers
of the unpreconditioned system DIRECT as well as the diagonally 
preconditioned system DIAG essentially coincide.

If we compare the computational times used for solving the different
preconditioned systems, we infer that the system with the matrix $\PPAStilde^L$ is
solved fastest, see Figure~\ref{fig:time_Lshape}. Clearly, this is because the number of iterations,
is the smallest of all iterative methods, cf.~Figure~\ref{fig:iter_Lshape}. We
obtain from Figure~\ref{fig:time_Lshape} that the computational time is almost
linear in the number of elements in the mesh $\TT_L$. However, direct solvers
are known to have higher computational costs. Moreover, the memory consumption
of direct solvers also exceeds that of an efficient iterative solution method.
Therefore, if the number of elements are large, efficient iterative methods are
inevitable.

\subsection{Artificial mesh-adaptation for hypersingular integral equation for 2D Neumann problem on L-shaped domain}\label{sec:artLshape}
We consider again the problem from the previous Section~\ref{sec:Lshape}.
However, we use a stronger mesh-adaptation towards the reentrant corner: We start with the initial mesh $\TT_0$ as given in Figure~\ref{fig:Lshape}, and obtain 
$\TT_\ell$ from $\TT_{\ell-1}$ by bisecting only the two elements closest to 
the origin $(0,0)$. Note that this refinement preserves $\gamma$-shape regularity 
of the triangulations $\TT_\ell$. Simple calculations show 
$\hmin\ell = 2^{-\ell} h_0$ and $\hmax\ell = h_0$, where $h_0\in\R$ denotes the 
constant mesh-width of the initial triangulation.
We compare the condition numbers of the unpreconditioned and preconditioned
systems in Figure~\ref{fig:cond_artLshape}. Here, we consider LMLD, GMLD,
and HB, as well as the simple diagonal scaling DIAG.
The results from Figure~\ref{fig:cond_artLshape} show that the condition number
of $\PPAStilde^L$ is uniformly bounded, while the condition number of $\PPAS^L$
grows with $L$. In particular, this proves that Theorem~\ref{thm:main}
and~\ref{thm:gmld} are sharp. The condition number of $\PP_{\rm HB}^L$ grows
even worse.
It is proved in~\cite[Corollary~1]{tsm97} that the condition number of HB 
is bounded by $|\log(\hmin{L})|^2 \simeq L^2$.
\subsection{Problem on slit}\label{sec:slit}
We consider the hypersingular equation~\eqref{eq:hypsing} on the slit $\Gamma =
(-1,1)\times\{0\}$ with right-hand side $f=1$ and exact solution $u(x,0) =
2\sqrt{1-x^2}$. For this example, the correct energy space is $\widetilde H^{1/2}(\Gamma)$,
and the exact solution belongs to $u\in(\widetilde H^{1/2}(\Gamma)\cap H^{1-\eps}(\Gamma))
\backslash H^1(\Gamma)$ for all $\eps>0$. In particular, uniform mesh-refinement
thus leads to the reduced order of convergence $\OO(h^{1/2})=\OO(N^{-1/2})$, while
the adaptive mesh-refinement of~\cite{zzbem} regains the optimal order
$\OO(N^{-3/2})$. 

As is shown in Section~\ref{sec:screen} below, the main result of 
Theorem~\ref{thm:main} also holds in this setting.
We compare different types of multilevel additive Schwarz preconditioners
like LMLD, GMLD, and HB, as well as simple diagonal scaling with
respect to condition numbers in Figure~\ref{fig:cond_slit} resp.\ 
the number of GMRES iterations in Figure~\ref{fig:iter_slit}. Moreover, 
we also compare GMRES vs.\ the direct solver with respect
to the computational time, see Figure~\ref{fig:time_slit}.

\section{Proof of Theorem~\ref{thm:main}}
\label{section:proof}


\subsection{Abstract analysis of additive Schwarz operators}\label{sec:as}
In this subsection, we show that the multilevel diagonal scaling from
Section~\ref{sec:main} is a multilevel additive Schwarz method. Recall the
notation from Section~\ref{section:main:hierarchy} and
Section~\ref{section:main:precond}.

For each subspace $\XX_z^\ell = \linhull\{\eta_z^\ell\}$, we define the
projection $\prec_z^\ell : \XX^L\to \XX_z^\ell$ by
\begin{align}\label{eq:defproj}
  \edual{\prec_z^\ell v}{w_z^\ell} = \edual{v}{w_z^\ell} \quad\text{for all }
  w_z^\ell \in\XX_z^\ell.
\end{align}
Note that $\prec_z^\ell$ is the orthogonal projection onto
the one-dimensional space $\XX_z^\ell$.
Thus, the explicit representation of $\prec_z^\ell$ reads
\begin{align}\label{eq:Pz}
  \prec_z^\ell v =  \frac{\edual{v}{\eta_z^\ell}}{\enorm{\eta_z^\ell}^2}\,
  \eta_z^\ell.
\end{align}
Based on~$\prec_z^\ell$, we define
the additive Schwarz operator 
\begin{align}
  \PAStilde^L  := \sum_{\ell=0}^L\sum_{z\in\widetilde\NN_\ell}
 \prec_z^\ell : \XX^L \to \XX^L.
\end{align}
Let $v = \sum_{j=1}^{N_L} \xx_j \eta_{z_j}^L$, $w=
\sum_{k=1}^{\widetilde N_\ell} \yy_k \eta_{z_k}^\ell$.
A moment's reflection reveals that the operator
\begin{align}
  \widetilde\prec^\ell 
  := \sum_{z\in\widetilde\NN_\ell} \prec_z^\ell : \XX^L \to \widetilde \XX^\ell
\end{align}
and the matrix
\begin{align}
  \widetilde\PP^\ell = (\widetilde\DD^\ell)^{-1} (\widetilde{\bf I}^\ell)^T \AA^L
\end{align}
are related through $\edual{\widetilde\prec^\ell v}w =
\dual{\widetilde\PP^\ell \xx}\yy_{\AA^L}$.
In particular, the matrix $\PPAStilde^L$ reads
\begin{align}
  \PPAStilde^L  = \Big( \sum_{\ell=0}^L \widetilde{\bf I}^\ell
  (\widetilde\DD^\ell)^{-1} (\widetilde{\bf I}^\ell)^T \Big) \AA^L
 =: (\BB^L)^{-1} \AA^L,
\end{align}
with $\edual{\PAStilde^L v}w = \dual{\PPAStilde^L \xx}\yy_{\AA^L}$ for $w =
\sum_{k=0}^{N_L} \yy_k \eta_{z_k}^L$,
i.e.\ the additive Schwarz operator $\PAStilde^L$ generates the preconditioner from Theorem~\ref{thm:main}.

The following well-known result, see e.g.~\cite[Lemma~2]{griosw94}, collects some soft results on the additive Schwarz
operator $\PAStilde^L$ which hold in an arbitrary Hilbert space setting with 
\begin{align}
  \XX^L = \sum_{\ell=0}^L\sum_{z\in\widetilde\NN_\ell}\XX_z^\ell.
\end{align}

\begin{lemma}\label{lem:propPAS}
The operator $\PAStilde^L$ is linear and bounded as well as symmetric 
\begin{align}
  \edual{\PAStilde^Lv}{w} = \edual{v}{\PAStilde^Lw} 
 \quad\text{for all }v,w\in H^{1/2}(\Gamma)
\end{align}
and positive definite on $\XX^L$
\begin{align}
  \edual{\PAStilde^Lv}{v} > 0
 \quad\text{for all }v\in \XX^L\backslash\{0\}
\end{align}
with respect to the scalar product $\edual\cdot\cdot$.\qed
\end{lemma}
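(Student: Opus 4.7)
The four assertions (linearity, boundedness, symmetry, positive definiteness on $\XX^L$) are the standard soft properties of additive Schwarz operators, and I plan to derive them directly from the fact that each $\prec_z^\ell$ is the $\edual\cdot\cdot$-orthogonal projection onto the one-dimensional subspace $\XX_z^\ell$, together with the subspace decomposition $\XX^L = \sum_{\ell=0}^L\sum_{z\in\widetilde\NN_\ell}\XX_z^\ell$.

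Linearity is immediate, since $\PAStilde^L$ is a finite sum of linear projections; boundedness follows because each orthogonal projection satisfies $\enorm{\prec_z^\ell v}\le\enorm{v}$, whence $\enorm{\PAStilde^L v}\le \bigl(\sum_{\ell=0}^L \#\widetilde\NN_\ell\bigr)\enorm{v}$. For symmetry I would use the defining Galerkin orthogonality~\eqref{eq:defproj}: for every $v,w\in H^{1/2}(\Gamma)$,
\begin{align*}
\edual{\prec_z^\ell v}{w} = \edual{\prec_z^\ell v}{\prec_z^\ell w} = \edual{v}{\prec_z^\ell w},
\end{align*}
since $\prec_z^\ell v,\prec_z^\ell w\in\XX_z^\ell$. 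Summing over $\ell$ and $z\in\widetilde\NN_\ell$ yields the symmetry of $\PAStilde^L$.

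For positive semidefiniteness, the same identity gives $\edual{\prec_z^\ell v}{v}=\enorm{\prec_z^\ell v}^2\ge 0$, so $\edual{\PAStilde^L v}{v}\ge 0$ for every $v\in H^{1/2}(\Gamma)$. The only genuine (though still short) point is strict positivity on $\XX^L$: assume $v\in\XX^L$ with $\edual{\PAStilde^L v}{v}=0$. Then $\prec_z^\ell v=0$ for all admissible pairs $(\ell,z)$. By the subspace decomposition we can write $v = \sum_{\ell=0}^L\sum_{z\in\widetilde\NN_\ell} v_z^\ell$ with $v_z^\ell\in\XX_z^\ell$, and the projection property~\eqref{eq:defproj} yields $\edual{v}{v_z^\ell}=\edual{\prec_z^\ell v}{v_z^\ell}=0$ for every term, so $\enorm{v}^2=\sum_{\ell,z}\edual{v}{v_z^\ell}=0$ and $v=0$.

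The only mildly subtle step is verifying that every $v\in\XX^L$ admits a decomposition of the above form; this is exactly the stated identity $\XX^L = \sum_{\ell,z}\XX_z^\ell$, which already follows from $\widetilde\NN_L\supseteq\NN_L\setminus\NN_{L-1}$ together with the nodal basis of $\XX^L$ at level $L$ (so in fact the level-$L$ term alone already spans $\XX^L$). Hence no further nontrivial argument, such as a stable decomposition estimate, is needed for this lemma; those quantitative ingredients will enter only in the proof of Theorem~\ref{thm:main}.
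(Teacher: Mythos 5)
Your approach is the standard direct verification of the soft properties of an additive Schwarz operator, which is exactly what the paper delegates to its cited reference, so the overall strategy is fine; linearity, boundedness, symmetry via the projection identity $\edual{\prec_z^\ell v}{w}=\edual{\prec_z^\ell v}{\prec_z^\ell w}=\edual{v}{\prec_z^\ell w}$, and positive semidefiniteness are all correctly argued. However, the parenthetical justification of the subspace decomposition at the end is wrong, and since positive definiteness hinges precisely on this decomposition, that gap is a genuine flaw.

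Concretely, it is \emph{not} true that ``the level-$L$ term alone already spans $\XX^L$.'' The set $\widetilde\NN_L$ from~\eqref{eq:defNtilde} contains only the nodes created at level $L$ together with those old nodes $z\in\NN_{L-1}$ whose patch strictly shrank, i.e.\ $\omega_L(z)\subsetneqq\omega_{L-1}(z)$. A node $z\in\NN_{L-1}$ far from the refined region has $\omega_L(z)=\omega_{L-1}(z)$ and hence is excluded, so generically $\widetilde\NN_L\subsetneq\NN_L$ and $\widetilde\XX^L=\linhull\{\eta_z^L:z\in\widetilde\NN_L\}\subsetneq\XX^L$. The inclusion $\widetilde\NN_L\supseteq\NN_L\setminus\NN_{L-1}$ that you invoke only accounts for new nodes, not for the old nodes that drop out. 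The identity $\XX^L=\sum_{\ell=0}^L\sum_{z\in\widetilde\NN_\ell}\XX_z^\ell$ is nevertheless true, but it is a multi-level fact and requires an argument: for instance, induct on $L$, observing that for $z\in\NN_\ell\setminus\widetilde\NN_\ell$ the patch of $z$ is untouched by the refinement, whence $\eta_z^\ell=\eta_z^{\ell-1}$ (this is recorded explicitly in Section~\ref{sec:sz}), so every basis function of $\XX^L$ is either $\eta_z^L$ with $z\in\widetilde\NN_L$ or coincides with a basis function of $\XX^{L-1}$, which lies in the span by the induction hypothesis. Alternatively, one can appeal to the explicit decomposition $v=\sum_{\ell=0}^L(J_\ell-J_{\ell-1})v$ with $(J_\ell-J_{\ell-1})v\in\widetilde\XX^\ell$ that the paper constructs in Section~\ref{section:lower bound}, but the inductive argument is lighter and suffices for the purely algebraic statement needed here. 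With this repair, your proof is complete and correct.
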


In our concrete setting, the additive Schwarz operator $\PAStilde^L$ 
satisfies the following spectral equivalence estimate which is proved in
Section~\ref{section:lower bound} (lower bound) resp.
Section~\ref{section:upper bound} (upper bound).
\begin{proposition}\label{prop:as}
  The operator $\PAStilde^L$ satisfies
\begin{align}\label{eq:prop:PAS}
  c \, \enorm{v}^2 \leq \edual{\PAStilde^L v}v 
 \le C\,\enorm{v}^2\quad\text{for all } v\in\XX^L.
\end{align}
The constants $c,C>0$ depend only on $\Gamma$ and the initial triangulation
$\TT_0$.
\end{proposition}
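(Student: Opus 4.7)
The plan is to split the proof of Proposition~\ref{prop:as} into the lower eigenvalue bound $c\,\enorm{v}^2 \le \edual{\PAStilde^L v}{v}$ and the upper bound $\edual{\PAStilde^L v}{v} \le C\,\enorm{v}^2$, as the two require different techniques. Since $\prec_z^\ell$ is the $\edual{\cdot}{\cdot}$-orthogonal projection onto $\XX_z^\ell$, a direct calculation gives $\edual{\PAStilde^L v}{v} = \sum_{\ell,z} \enorm{\prec_z^\ell v}^2$; by the classical Lions/Griebel-Oswald characterization of additive Schwarz operators, the lower bound is then equivalent to producing, for every $v\in\XX^L$, a stable decomposition $v = \sum_{\ell=0}^L \sum_{z\in\widetilde\NN_\ell} v_z^\ell$ with $v_z^\ell \in \XX_z^\ell$ and $\sum \enorm{v_z^\ell}^2 \lesssim \enorm{v}^2$. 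The lower bound will be the main obstacle; the upper bound reduces to inverse estimates and a bounded-overlap count.

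For the lower bound I would adapt the strategy of Wu and Chen~\cite{wuchen06} from $H^1$ to $H^{1/2}$. The central technical ingredient is a modified Scott-Zhang quasi-interpolation $J_\ell: H^{1/2}(\Gamma)\to\XX^\ell$ enjoying three properties: (i)~$J_\ell$ is a projection, i.e., $J_\ell v = v$ for $v\in\XX^\ell$; (ii)~$J_\ell$ is $H^{1/2}$-stable and admits a local $L^2$-approximation bound on patches $\omega_\ell(z)$; and, crucially, (iii)~$J_\ell v$ and $J_{\ell-1}v$ coincide at every node $z\in\NN_{\ell-1}$ with $\omega_\ell(z)=\omega_{\ell-1}(z)$, so that by~\eqref{eq:defNtilde} the difference $J_\ell v - J_{\ell-1}v$ is automatically supported on nodes in $\widetilde\NN_\ell$ and may be expanded as $\sum_{z\in\widetilde\NN_\ell}\alpha_z^\ell\eta_z^\ell$. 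Property~(iii) is engineered by choosing, at inactive nodes, the same dual edge (or face) functional as at level $\ell-1$, which is possible precisely because the mesh is unchanged in the corresponding stars.

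With $J_\ell$ in hand, the telescoping $v = J_0 v + \sum_{\ell=1}^L (J_\ell v - J_{\ell-1} v)$ for $v\in\XX^L$, together with $J_Lv=v$, produces the candidate decomposition. Using the scalings $\enorm{\eta_z^\ell}^2 \simeq h_\ell(z)^{d-2}$ and $\|\eta_z^\ell\|_{L^2}^2 \simeq h_\ell(z)^{d-1}$, stability reduces to the multilevel estimate
\begin{align*}
\sum_{\ell=0}^L\sum_{z\in\widetilde\NN_\ell}(\alpha_z^\ell)^2\, h_\ell(z)^{d-2} \lesssim \enorm{v}^2,
\end{align*}
which I would prove by real interpolation between the analogous $L^2$ and $H^1$ estimates for the coefficients $\alpha_z^\ell$. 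The $H^1$ bound is essentially the content of~\cite{wuchen06}; the $L^2$ bound follows from the $L^2$-stability of $J_\ell$ and an orthogonality-type argument; the $K$-functional then yields the desired $H^{1/2}$ inequality.

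For the upper bound, the explicit formula~\eqref{eq:Pz} gives $\enorm{\prec_z^\ell v}^2 = |\edual{v}{\eta_z^\ell}|^2/\enorm{\eta_z^\ell}^2$, and it remains to show
\begin{align*}
\sum_{\ell=0}^L \sum_{z\in\widetilde\NN_\ell} \frac{|\edual{v}{\eta_z^\ell}|^2}{\enorm{\eta_z^\ell}^2} \lesssim \enorm{v}^2.
\end{align*}
I would bound each $|\edual{v}{\eta_z^\ell}|$ by a localized $H^{1/2}$-type contribution of $v$ on $\omega_\ell(z)$ via continuity of $\hyp$, and then sum with bounded overlap: the definition~\eqref{eq:defNtilde} of $\widetilde\NN_\ell$ together with $\gamma$-shape regularity of newest vertex bisection implies that each node $z$ belongs to $\widetilde\NN_\ell$ only for a uniformly bounded number of levels, namely those with $\level_\ell(z)\simeq\ell$. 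The hardest point throughout is the construction and analysis of $J_\ell$: arranging property~(iii) simultaneously with $H^{1/2}$-stability on arbitrarily refined meshes requires a patch-dependent choice of dual basis functions, and the passage to $H^{1/2}$ is strictly harder than the integer-order case in~\cite{wuchen06} because no local Bramble-Hilbert argument applies directly to the fractional seminorm.
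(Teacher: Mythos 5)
Your overall framework for the lower bound — Lions' lemma plus the modified Scott--Zhang projection $J_\ell$ with property~\eqref{eq:szprop1b} and telescoping — matches the paper. However, there are two genuine gaps, one in each half.

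For the lower bound, you propose to prove the stability estimate by real interpolation between $L^2$ and $H^1$ endpoint bounds on the coefficients $\alpha_z^\ell=(J_\ell-J_{\ell-1})v(z)$. The $H^1$ endpoint is indeed what~\cite{wuchen06} proves, but the $L^2$ endpoint $\sum_{\ell,z}(\alpha_z^\ell)^2 h_\ell(z)^{d-1}\lesssim\|v\|_{L^2(\Gamma)}^2$ with an $L$-independent constant is not a consequence of $L^2$-stability of $J_\ell$ and is not given by any obvious orthogonality: the operators $J_\ell-J_{\ell-1}$ are neither orthogonal projections nor mutually orthogonal, so $\sum_\ell\|(J_\ell-J_{\ell-1})v\|_{L^2}^2$ does not collapse. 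The crude pointwise bound from Lemma~\ref{lem:szest} alone gives only an $O(L)\|v\|_{L^2}^2$ bound. What actually makes the stable decomposition work in the paper is a cancellation you have not invoked: since $J_\ell,J_{\ell-1}$ reproduce functions that are linear on $T_z^\ell$ at the node $z$, one may replace $v$ by $v-\widehat\Pi_{\level_\ell(z)-\c{n}}v$ inside the pointwise estimate, where $\widehat\Pi_m$ is the $L^2$-orthogonal projection onto the \emph{uniform} refinement $\widehat\XX^m$; the resulting multilevel sum is then controlled directly in $H^{1/2}$ via the characterization $\sum_m \widehat h_m^{-1}\|v-\widehat\Pi_m v\|_{L^2}^2\lesssim\|v\|_{H^{1/2}}^2$ from~\cite{amcl03} (Lemma~\ref{lem:h12normest}). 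Without this subtraction, neither the $L^2$ endpoint nor a direct $H^{1/2}$ bound goes through, so your interpolation plan is missing its essential ingredient.

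For the upper bound, the bounded-overlap argument you sketch is not correct. Lemma~\ref{lem:Kbound} bounds, for each \emph{fixed} level $k$, the number of $\ell$ with $z\in\widetilde\NN_\ell$ and $\level_\ell(z)=k$; it does \emph{not} bound the total number of $\ell$ with $z\in\widetilde\NN_\ell$, which can grow with $L$ when the mesh is refined repeatedly near $z$ (as in Section~\ref{sec:artLshape}). Moreover, $H^{1/2}(\Gamma)$ is a nonlocal space, so one cannot localize $\enorm{v}^2$ to patches $\omega_\ell(z)$ and sum with finite overlap across both levels and nodes. The paper circumvents both obstructions by regrouping the additive Schwarz operator by level ($\PAStilde^L=\sum_m\widetilde\QQ_m^L$), proving a strengthened Cauchy--Schwarz inequality with geometric decay $2^{-(m+1-k)}$ between $\widetilde\QQ_m^L$ and functions in $\widehat\XX^k$ (Lemma~\ref{lem:hypscs}), and then using the $\edual\cdot\cdot$-orthogonal Galerkin decomposition $v=\sum_k(\widehat\gal_k-\widehat\gal_{k-1})v$ to close the argument. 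Your outline omits this decay mechanism entirely, and the argument would fail without it.
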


The relation between $\PAStilde^L$ and the symmetric matrix
$\PPAStilde^L$ 
yields the eigenvalue estimates from Theorem~\ref{thm:main}.

\begin{proof}[Proof of Theorem~\ref{thm:main}]
  Symmetry of $(\widetilde\BB^L)^{-1}$ follows from the definition~\eqref{eq:defBB}.
The other properties are obtained using the identity
\begin{align*}
  \edual{\PAStilde^L v}w = \dual{\PPAStilde^L \xx}\yy_{\AA^L} \quad\text{for all } v =
  \sum_{j=1}^{N_L} \xx_j \eta_{z_j}^L, w = \sum_{j=1}^{N_L} \yy_j \eta_{z_j}^L.
\end{align*}
An immediate consequence of this identity and Lemma~\ref{lem:propPAS} is
symmetry as well as positive definiteness of $\PPAStilde^L$ with
respect to $\dual\cdot\cdot_{\AA^L}$.
Proposition~\ref{prop:as} directly yields
\begin{align*}
  c \norm\xx{\AA^L}^2 \leq \dual{\PPAStilde^L \xx}\xx_{\AA^L} \leq C \norm\xx{\AA^L}^2
  \quad\text{for all }\xx\in \R^{N_L}.
\end{align*}
Thus, a bound for the minimal resp. maximal eigenvalue of
$\PPAStilde^L$ is given by
\begin{align*}
  \evmin(\PPAStilde^L)\geq c, \qquad \evmax(\PPAStilde^L) \leq C.
\end{align*}
In the next step, we prove positive definiteness of
$(\widetilde\BB^L)^{-1}$. Lemma~\ref{lem:propPAS} shows 
\begin{align*}
  0 < \edual{\PAStilde^L v}v = \dual{\PPAStilde^L
  \xx}\xx_{\AA^L} = \dual{(\widetilde\BB^L)^{-1}\AA^L \xx}{\AA^L \xx}_2.
\end{align*}
Since $\AA^L$ is regular, we obtain $\dual{(\widetilde\BB^L)^{-1}\yy}\yy_2 > 0$ for all
$\yy\in\R^{N_L}$. 
In particular the inverse $\widetilde\BB^L$ of
$(\widetilde\BB^L)^{-1}$ is well-defined, symmetric
and positive definite. The identity
\begin{align*}
  \dual{\PPAStilde^L \xx}\yy_{\widetilde\BB^L} = \dual{\AA^L \xx}\yy_2 \quad\text{for all }
  \xx,\yy\in\R^{N_L},
\end{align*}
and the symmetry of $\AA^L$ prove that $\PPAStilde^L$ is symmetric with
respect to $\dual\cdot\cdot_{\widetilde\BB^L}$.
Finally, we stress that the condition number $\cond_\CC(\AA)$ of a matrix $\AA$
is given by $\cond_\CC(\AA) = \evmax(\AA) / \evmin(\AA)$, if $\CC$ is positive
definite as well as symmetric and $\AA$ is symmetric with respect to
$\dual\cdot\cdot_\CC$. Therefore,
\begin{align*}
  \cond_{\widetilde\BB^L}(\PPAStilde^L) =
  \cond_{\AA^L}(\PPAStilde^L) =
  \frac{\evmax(\PPAStilde^L)}{\evmin(\PPAStilde^L)} \leq \frac{C}c,
\end{align*}
which also concludes the proof.
\end{proof}
The remainder of this section is now concerned with the proof of
Proposition~\ref{prop:as}.
This requires some preparations and auxiliary results
(Section~\ref{sec:proof:unif}--\ref{sec:proof:aux}), before we face the lower
bound (Section~\ref{section:lower bound}) and the upper bound
(Section~\ref{section:upper bound}) of~\eqref{eq:prop:PAS}.


\subsection{Uniform mesh-refinement and $L^2$-orthogonal projection}
\label{sec:proof:unif}
Besides the sequence of locally refined triangulations $\TT_\ell$, we consider 
a second unrelated sequence $\widehat\TT_m$ of uniform triangulations:
Let $\widehat\TT_0 :=\TT_0$ and let $\widehat\TT_{m+1}$ be obtained from
$\widehat\TT_m$ by uniform refinement, i.e.\ all elements of $\widehat\TT_m$ are
bisected into son elements with half diameter. For $d=2$, this corresponds to
one bisection per element, while three bisections are used for $d=3$,
cf.\ Figure~\ref{fig:nvb:bisec}.
Let $\widehat\NN_m$ denote the set of all nodes of $\widehat\TT_m$.
Recall $\widehat h_0 := \max_{T\in\TT_0} h_0(T)$ and define the constant
\begin{align}\label{def:hat-hell}
  \widehat h_m := 2^{-m} \widehat h_0 \quad\text{for each }m \geq 1.
\end{align}
Note that $\widehat h_m$ is equivalent to the usual local mesh-size function
on $\widehat\TT_m$, i.e. $\widehat h_m \simeq \diam(T)$ for all
$T\in\widehat\TT_m$ and all $m\ge0$.
For later use, we recall the following result which is part 
of~\cite[Proof of Lemma~3.3]{wuchen06}.

\begin{lemma}\label{lem:unifEquivalence}
For all $z\in\NN_\ell$ holds $z\in\widehat\NN_{\level_\ell(z)}$ with
\begin{align}
  \c{eq_unif_1} \widehat h_{\level_\ell(z)} \leq h_\ell(z) \leq 
  \c{eq_unif_2} \widehat h_{\level_\ell(z)},
\end{align}
where the constants $\setc{eq_unif_1},\setc{eq_unif_2}>0$ depend only on the $\gamma$-shape regularity and the initial
triangulation $\TT_0$.\qed
\end{lemma}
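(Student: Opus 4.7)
The plan is to split the lemma into two independent parts: (i) the two-sided inequality comparing $h_\ell(z)$ with $\widehat h_{\level_\ell(z)}$, which follows directly from the definition of $\level_\ell(z)$; and (ii) the hierarchical statement $z\in\widehat\NN_{\level_\ell(z)}$, which requires the combinatorial structure of newest vertex bisection (NVB).

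For part (i), I would simply unpack the floor in the definition~\eqref{eq:level}. Writing $m:=\level_\ell(z)$, the defining inequality $m\le \log(h_\ell(z)/\widehat h_0)/\log(1/2)<m+1$ rearranges (note $\log(1/2)<0$) to $\widehat h_0\,2^{-(m+1)}<h_\ell(z)\le\widehat h_0\,2^{-m}$, i.e.\ $\widehat h_{m+1}<h_\ell(z)\le\widehat h_m$. Since $\widehat h_{m+1}=\widehat h_m/2$, this yields the asserted bound with absolute constants $1/2$ and $1$; any additional $\gamma$-dependence only appears when one converts between the nodal mesh-size $h_\ell(z)$ and the element diameters $h_\ell(T)$ of $T\ni z$, via shape regularity.

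For part (ii), I would argue along the NVB refinement tree. The structural fact I need from NVB (see~\cite{kpp,cmam}) is that every node $z$ of any refined mesh carries a well-defined \emph{generation} $g(z)\in\N_0$: initial nodes of $\TT_0$ have $g(z)=0$, and the midpoint created when bisecting a reference edge of generation $k$ inherits generation $k+1$. Two auxiliary properties are then required. First, every node of generation at most $m$ lies in $\widehat\NN_m$, since $\widehat\TT_m$ realises all bisections of depth $\le m$ of the initial reference edges. Second, shape regularity gives $h_\ell(z)\simeq \widehat h_{g(z)}$, because each edge abutting $z$ has length comparable to $\widehat h_{g(z)}$. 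Combining the second property with the equivalence $h_\ell(z)\simeq \widehat h_{\level_\ell(z)}$ from part (i), one infers $g(z)=\level_\ell(z)$, and the first property together with the monotonicity $\widehat\NN_k\subseteq\widehat\NN_{k+1}$ yields $z\in\widehat\NN_{\level_\ell(z)}$.

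The main obstacle is the second auxiliary property, $h_\ell(z)\simeq \widehat h_{g(z)}$. For $d=2$, bisection halves edge lengths exactly, so this is immediate. For $d=3$, however, NVB only halves reference edges, and the master refinement pattern depicted in Figure~\ref{fig:nvb:bisec} must be controlled by the finitely many similarity classes of NVB-generated triangles. Once one invokes the standard fact that these similarity classes are finite and hence edge lengths of a triangle of generation $g$ are uniformly comparable to $2^{-g}\,\widehat h_0$, the conclusion is routine. This is exactly the line of argument carried out in~\cite[Proof of Lemma~3.3]{wuchen06}, which I would cite and adapt to our two settings $d=2$ (optimal 1D bisection from~\cite{cmam}) and $d=3$ (2D NVB), taking care to track that the resulting constants $\c{eq_unif_1},\c{eq_unif_2}$ depend only on the $\gamma$-shape regularity of $\TT_0$.
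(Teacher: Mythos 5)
Part (i) of your argument is fine: unpacking the floor in~\eqref{eq:level} gives exactly $\tfrac12\widehat h_m<h_\ell(z)\le\widehat h_m$ with $m=\level_\ell(z)$, and this is indeed the substance of the two-sided estimate. (The paper itself gives no proof here and simply refers to~\cite[Proof of Lemma~3.3]{wuchen06}.)

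Part (ii), however, contains a genuine error. Your ``second auxiliary property,'' $h_\ell(z)\simeq\widehat h_{g(z)}$, is false. The generation $g(z)$ is fixed once and for all when $z$ is created, whereas $h_\ell(z)$ keeps shrinking as the triangulations $\TT_\ell$ are refined near $z$: every subsequent bisection of an edge containing $z$ produces shorter edges at $z$, and $\gamma$-shape regularity of $\TT_\ell$ is a purely local compatibility condition that does nothing to prevent $h_\ell(z)\ll\widehat h_{g(z)}$. Consequently $\level_\ell(z)$ grows with $\ell$ while $g(z)$ stays put, so the identity $g(z)=\level_\ell(z)$ you wish to infer cannot hold. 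There is a second, independent logical gap: even if one had $h_\ell(z)\simeq\widehat h_{g(z)}$, combining it with $h_\ell(z)\simeq\widehat h_{\level_\ell(z)}$ only yields $|g(z)-\level_\ell(z)|\le C$, and to conclude $z\in\widehat\NN_{\level_\ell(z)}$ via $\widehat\NN_k\subseteq\widehat\NN_{k+1}$ you need the one-sided estimate $g(z)\le\level_\ell(z)$, which does not follow from comparability.

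What is actually needed is to track the generation of the shortest \emph{edge} (or element) at $z$ in $\TT_\ell$, not the generation of the node itself. Concretely, in $d=2$: let $E$ be the shortest edge of $\TT_\ell$ containing $z$, with coarse-mesh ancestor $T_0\in\TT_0$ and $k'$ bisections, so $h_\ell(z)=|E|=h_0(T_0)\,2^{-k'}\le\widehat h_0\,2^{-k'}=\widehat h_{k'}$. The floor definition gives $h_\ell(z)>\widehat h_{\level_\ell(z)+1}$, hence $k'\le\level_\ell(z)$. Since $E$ is an element of $\widehat\TT_{k'}$, its endpoint $z$ lies in $\widehat\NN_{k'}\subseteq\widehat\NN_{\level_\ell(z)}$. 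For $d=3$ the same reasoning goes through after controlling edge lengths via the finitely many NVB similarity classes, which is where~\cite{wuchen06} does the careful bookkeeping; but the quantity one should compare with $\level_\ell(z)$ is the bisection depth of the shortest current edge at $z$, not $g(z)$.
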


Let $\widehat\XX^m := \SS^1(\widehat\TT_m)$ and denote by
$\widehat\Pi_m : L^2(\Gamma)\to\widehat\XX^m$
the $L^2$-orthogonal projection onto $\widehat\XX^m$. 

\begin{lemma}\label{lem:h12normest}
  For all $v\in H^{1/2}(\Gamma)$ holds
  \begin{align}\label{eq:h12normest}
    \sum_{m=0}^\infty \widehat h_m^{-1} 
    \norm{v-\widehat\Pi_m v}{L^2(\Gamma)}^2
    \le \c{norm} \, \norm{v}{H^{1/2}(\Gamma)}^2.
  \end{align}
  The constant $\setc{norm}>0$ depends only on $\Gamma$ and the
  initial triangulation $\TT_0$.
\end{lemma}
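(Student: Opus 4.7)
The plan is to combine standard approximation properties of the uniform $L^2$-projection $\widehat\Pi_m$ with the $K$-functional characterization of the interpolation norm on $H^{1/2}(\Gamma) = [L^2(\Gamma), H^1(\Gamma)]_{1/2}$.

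First, I would establish the two endpoint estimates for $\widehat\Pi_m$: the $L^2$-stability
$\norm{\widehat\Pi_m w}{L^2(\Gamma)} \le \norm{w}{L^2(\Gamma)}$, which gives $\norm{w - \widehat\Pi_m w}{L^2(\Gamma)} \le 2\norm{w}{L^2(\Gamma)}$ for all $w \in L^2(\Gamma)$, together with the first-order approximation estimate $\norm{w - \widehat\Pi_m w}{L^2(\Gamma)} \lesssim \widehat h_m \norm{w}{H^1(\Gamma)}$ for $w \in H^1(\Gamma)$. The latter follows by local Bramble-Hilbert type arguments on the quasi-uniform meshes $\widehat\TT_m$, using $\widehat h_m \simeq \diam(T)$ for $T \in \widehat\TT_m$ and $\gamma$-shape regularity inherited from $\TT_0$.

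Next, for fixed $m$ I would use the $K$-functional
\begin{equation*}
K(t,v) := \inf_{v = v_0 + v_1,\, v_0 \in L^2,\, v_1 \in H^1} \bigl(\norm{v_0}{L^2(\Gamma)} + t\,\norm{v_1}{H^1(\Gamma)}\bigr).
\end{equation*}
Splitting $v = v_0 + v_1$ nearly optimally for $t = \widehat h_m$ and applying the two endpoint estimates to $v_0$ and $v_1$ separately gives $\norm{v - \widehat\Pi_m v}{L^2(\Gamma)} \lesssim K(\widehat h_m, v)$. Since $\norm{v}{H^{1/2}(\Gamma)}^2 \simeq \int_0^\infty t^{-2} K(t,v)^2\,dt$ (the standard real interpolation norm for $\theta = 1/2$), the claim reduces to showing
\begin{equation*}
\sum_{m=0}^\infty \widehat h_m^{-1}\, K(\widehat h_m, v)^2 \lesssim \int_0^\infty t^{-2} K(t,v)^2\,dt + \norm{v}{L^2(\Gamma)}^2.
\end{equation*}

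For this last step, I would exploit that $t \mapsto K(t,v)$ is non-decreasing together with the geometric spacing $\widehat h_{m-1} = 2\widehat h_m$. On the interval $[\widehat h_m, \widehat h_{m-1}]$ one has $K(t,v) \ge K(\widehat h_m, v)$, so
\begin{equation*}
\int_{\widehat h_m}^{\widehat h_{m-1}} t^{-2} K(t,v)^2\,dt \ge K(\widehat h_m,v)^2 \cdot (\widehat h_m^{-1} - \widehat h_{m-1}^{-1}) = \tfrac{1}{2}\,\widehat h_m^{-1}\, K(\widehat h_m, v)^2,
\end{equation*}
for $m \ge 1$. Summing the disjoint intervals partitions $(0,\widehat h_0]$ and controls the tail $m\ge 1$ by the $H^{1/2}$-norm. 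The single leftover term $m=0$ is handled by $K(\widehat h_0, v) \le \norm{v}{L^2(\Gamma)}$ and $\widehat h_0^{-1}\norm{v}{L^2(\Gamma)}^2 \lesssim \norm{v}{H^{1/2}(\Gamma)}^2$ (with a constant depending only on $\Gamma$ and $\TT_0$). The main technical obstacle is not any single step but the correct bookkeeping in the dyadic discretization and the verification that all hidden constants genuinely depend only on $\Gamma$ and $\TT_0$ and not on $v$ or $m$; this is where the shape regularity of the uniform refinement of $\TT_0$ is essential.
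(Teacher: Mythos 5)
Your proposal is correct, and it takes a genuinely different route from the paper. The paper's proof exploits the orthogonality of the nested $L^2$-projections to write $\norm{v-\widehat\Pi_m v}{L^2(\Gamma)}^2 = \sum_{k>m}\norm{(\widehat\Pi_k-\widehat\Pi_{k-1})v}{L^2(\Gamma)}^2$, swaps the order of summation, bounds the inner sum by the geometric series $\sum_{m<k}\widehat h_m^{-1} < \widehat h_k^{-1}$, and then invokes the multilevel norm equivalence of Ainsworth--McLean--Tran (cited as~\cite[Theorem~5]{amcl03}), which characterizes $\norm{\cdot}{H^s(\Gamma)}$ precisely in terms of $\sum_k \widehat h_k^{-2s}\norm{(\widehat\Pi_k-\widehat\Pi_{k-1})v}{L^2}^2$. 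Your argument instead bypasses that BEM-specific result: you use only the two endpoint estimates for $\widehat\Pi_m$ ($L^2$-stability and first-order $H^1$-approximation), which give $\norm{v-\widehat\Pi_m v}{L^2}\lesssim K(\widehat h_m,v)$, and then convert the dyadic sum $\sum_m \widehat h_m^{-1}K(\widehat h_m,v)^2$ directly into the integral $\int_0^\infty t^{-2}K(t,v)^2\,dt$ using the monotonicity of $K$ on the geometrically spaced intervals, handling the $m=0$ term by $K(\widehat h_0,v)\le\norm{v}{L^2}$ and the fixed scale $\widehat h_0$. The paper's approach buys a shorter proof once the cited equivalence is available (and that same equivalence is used elsewhere in the paper and in the extension to screens, so no effort is wasted); your approach is more self-contained and uses only textbook real-interpolation theory plus standard quasi-uniform approximation estimates, at the cost of having to verify the $L^2\to H^1$ approximation property for $\widehat\Pi_m$ on the uniformly refined surface meshes. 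Both routes give constants depending only on $\Gamma$ and $\TT_0$, as required.
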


\begin{proof}
  We note that $\widehat\XX_k\subseteq\widehat\XX_{k+1}$ and
  $\lim\limits_{k\to\infty} \norm{v-\widehat\Pi_k v}{L^2(\Gamma)} = 0$ for
  all $v\in L^2(\Gamma)$. Thus,
  \begin{align}\label{eq:h12normest1}
    \norm{v-\widehat\Pi_m v}{L^2(\Gamma)}^2 = \sum_{k=m+1}^\infty
    \norm{(\widehat\Pi_k-\widehat\Pi_{k-1})v}{L^2(\Gamma)}^2.
  \end{align}
  Plugging~\eqref{eq:h12normest1} into the left-hand side
  of~\eqref{eq:h12normest} and changing the order of summation, we see
  \begin{align}\label{eq:h12normest2}
    \begin{split}
    \sum_{m=0}^\infty \widehat h_m^{-1}\norm{v-\widehat\Pi_m v}{L^2(\Gamma)}^2
    &= \sum_{m=0}^\infty \widehat h_m^{-1} \sum_{k=m+1}^\infty
    \norm{(\widehat\Pi_k-\widehat\Pi_{k-1})v}{L^2(\Gamma)}^2 \\
    &= \sum_{k=1}^\infty \Big(\sum_{m=0}^{k-1} \widehat h_m^{-1}\Big)
    \norm{(\widehat\Pi_k-\widehat\Pi_{k-1})v}{L^2(\Gamma)}^2.
    \end{split}
  \end{align}
  With the definition~\eqref{def:hat-hell} of $\widehat h_m$ and the geometric series we infer
  \begin{align}\label{eq:h12normest3}
    \sum_{m=0}^{k-1} \widehat h_m^{-1} = \widehat h_0^{-1} \sum_{m=0}^{k-1} 2^m 
    < \widehat h_0^{-1} 2^k = \widehat h_k^{-1}.
  \end{align}
  \cite[Theorem~5]{amcl03} states that for
  $s\in[0,1]$ and $v\in H^s(\Gamma)$ it holds
  \begin{align}\label{eq:h12normest4}
    \norm{v}{H^s(\Gamma)}^2 \simeq \norm{\widehat\Pi_0 v}{H^s(\Gamma)}^2 +
    \sum_{k=1}^\infty \widehat h_k^{-2s}
    \norm{(\widehat\Pi_k-\widehat\Pi_{k-1})v}{L^2(\Gamma)}^2.
  \end{align}
  The hidden constants in~\eqref{eq:h12normest4} depend only on $\Gamma$, the
  initial triangulation $\TT_0$, and on $s$.
  Using equations~\eqref{eq:h12normest2}--\eqref{eq:h12normest3} and norm
  equivalence~\eqref{eq:h12normest4} for $s=1/2$, we conclude the proof of~\eqref{eq:h12normest}.
\end{proof}


\subsection{Scott-Zhang projection}\label{sec:sz}
We require a variant of the Scott-Zhang quasi-interpolation 
operator~\cite{sz}, see also~\cite{hypsing3d} for higher-order polynomials
$\SS^p(\TT_\ell)$ for $p\ge1$
and $H^s(\Gamma)$ resp.\ $\widetilde H^s(\Gamma)$ with $\Gamma\subseteq\partial\Omega$:
For $z\in\NN_\ell$, let
$T_z^\ell\in\TT_\ell$ be an element with $z\in T_z^\ell$. Let $\psi_z^\ell$
denote the $L^2$-dual basis function with 
\begin{align}\label{eq:szdual}
  \int_{T_z^\ell} \psi_z^\ell(x) \eta_{z'}^\ell(x) \,ds_x =
  \delta_{zz'} \quad\text{for all } z'\in\NN_\ell.
\end{align}
Then, the operator $J_\ell : L^2(\Gamma) \to \SS^1(\TT_\ell)$ defined by
\begin{align}\label{eq:szdef}
  J_\ell v = \sum_{z\in\NN_\ell}\eta_z^\ell \int_{T_z^\ell} \psi_z^\ell(x) v(x) \,ds_x
  \quad\text{for all }v\in L^2(\Gamma),
\end{align}
is an $H^s$-stable projection onto $\SS^1(\TT_\ell)$, i.e.
\begin{align}
 J_\ell v_\ell = v_\ell
 \quad\text{and}\quad
 \norm{J_\ell v}{H^s(\Gamma)}
 \le \c{stable}\,\norm{v}{H^s(\Gamma)}
 \quad\text{for all }
 v_\ell\in\SS^1(\TT_\ell)\text{ and }v\in H^s(\Gamma).
\end{align}
Arguing as in~\cite{sz}, $J_\ell$ satisfies for all $v\in H^1(\Gamma)$
\begin{align}
 \norm{\nabla J_\ell v}{L^2(T)}
 \le\c{sz}\,\norm{\nabla v}{L^2(\omega_\ell(T))}
 \quad\text{and}\quad
 \norm{v-J_\ell v}{L^2(T)}
 \le\c{sz}\,h_\ell(T)\,\norm{\nabla v}{L^2(\omega_\ell(T))}.
\end{align}
The constant $\setc{sz}>0$ depends only on $\gamma$-shape regularity
of $\TT_\ell$, while $\setc{stable}>0$ additionally depends on $\Gamma$.
Moreover, if $v$ is linear on $T_z^\ell$ it holds
\begin{align}\label{eq:szlin1}
  J_\ell v(z) = v(z).
\end{align}
Note that the choice of $T_z^\ell$ is arbitrary, but for
$z\in\NN_{\ell}\backslash\widetilde\NN_\ell \subseteq \NN_{\ell-1}$ we require
that $T_z^{\ell-1} = T_z^\ell \in \TT_\ell\cap \TT_{\ell-1}$. 
For $z\in\NN_\ell\backslash\widetilde\NN_\ell$, it thus follows $\eta_z^\ell =
\eta_z^{\ell-1}$ as well as $T_z^\ell = T_z^{\ell-1}$ and consequently $\psi_z^\ell =
\psi_z^{\ell-1}$.
This yields the following important
property that allows us to construct a stable subspace decomposition:
\begin{align}\label{eq:szprop1}
  (J_\ell-J_{\ell-1})v(z) = 0 \quad\text{for all }z\in\NN_\ell\backslash\widetilde\NN_\ell.
\end{align}
In particular, we have 
\begin{align}\label{eq:szprop1b}
 (J_\ell-J_{\ell-1})v\in 
 \linhull\{ \eta_z^\ell \,:\: z\in\widetilde\NN_\ell\} = \widetilde\XX^\ell.
\end{align}
Finally and with the second-order node patch 
from~\eqref{eq:patch},
we have the following pointwise estimate for the Scott-Zhang projection.

\begin{lemma}\label{lem:szest}
  For all $v\in L^2(\Gamma)$, it holds
  \begin{align}\label{eq:szest}
    |(J_\ell-J_{\ell-1})v(z)| \le \c{sz:point}\, h_\ell(z)^{-(d-1)/2}
    \norm{v}{L^2(\omega_{\ell-1}^2(z))} \quad\text{for all } z\in\widetilde\NN_\ell.
  \end{align}
  The constant $\setc{sz:point}>0$ depends only on $\gamma$-shape
  regularity of $\TT_\ell$.
\end{lemma}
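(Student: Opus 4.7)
The plan is to expand $J_\ell v(z)$ and $J_{\ell-1}v(z)$ pointwise using the defining formula~\eqref{eq:szdef}, and then to estimate each of the resulting integrals separately via Cauchy-Schwarz together with a standard scaling estimate for the dual basis functions $\psi_{z'}^m$.

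First, since $\eta_{z'}^\ell(z)=\delta_{zz'}$ for $z,z'\in\NN_\ell$, evaluation of~\eqref{eq:szdef} at $z$ collapses to
\begin{align*}
  J_\ell v(z) = \int_{T_z^\ell}\psi_z^\ell\,v\,ds_x
  \quad\text{and}\quad
  J_{\ell-1}v(z) = \sum_{z'\in\NN_{\ell-1}\cap\,\omega_{\ell-1}(z)} \eta_{z'}^{\ell-1}(z)\int_{T_{z'}^{\ell-1}}\psi_{z'}^{\ell-1}\,v\,ds_x,
\end{align*}
since only nodes $z'$ with $z\in\supp\eta_{z'}^{\ell-1}=\omega_{\ell-1}(z')$ contribute; by $\gamma$-shape regularity the number of such $z'$ is uniformly bounded.

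Next, a standard affine scaling argument applied to the defining identity~\eqref{eq:szdual} on a reference element gives $\|\psi_{z'}^m\|_{L^\infty(T_{z'}^m)}\lesssim|T_{z'}^m|^{-1}\simeq h_m(z')^{-(d-1)}$, and hence
\begin{align*}
  \|\psi_{z'}^m\|_{L^2(T_{z'}^m)}\lesssim h_m(z')^{-(d-1)/2},
\end{align*}
with hidden constant depending only on $\gamma$-shape regularity. Applying Cauchy-Schwarz to each of the two integrals above and using $0\le\eta_{z'}^{\ell-1}(z)\le1$ from~\eqref{eq:hatfunprop}, I would obtain
\begin{align*}
  |(J_\ell-J_{\ell-1})v(z)|\lesssim h_\ell(z)^{-(d-1)/2}\|v\|_{L^2(T_z^\ell)}
  +\sum_{z'\in\NN_{\ell-1}\cap\,\omega_{\ell-1}(z)} h_{\ell-1}(z')^{-(d-1)/2}\|v\|_{L^2(T_{z'}^{\ell-1})}.
\end{align*}

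Finally, two support/scale facts close the estimate. Because $\TT_\ell$ refines $\TT_{\ell-1}$, the fine element $T_z^\ell$ sits inside some $T'\in\TT_{\ell-1}$ with $z\in T'$, so $T_z^\ell\subseteq\omega_{\ell-1}(z)\subseteq\omega_{\ell-1}^2(z)$; likewise $T_{z'}^{\ell-1}\subseteq\omega_{\ell-1}(z')\subseteq\omega_{\ell-1}^2(z)$ for each contributing $z'$. Moreover, shape regularity together with the fact that $\TT_\ell$ is only one bisection away from $\TT_{\ell-1}$ near $z$ gives $h_{\ell-1}(z')\gtrsim h_\ell(z)$ for the relevant $z'$. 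Merging all the $L^2$-pieces into $\|v\|_{L^2(\omega_{\ell-1}^2(z))}$ and absorbing the uniformly bounded cardinality of the index set into the constant then produces the claimed inequality.

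The main obstacle is the bookkeeping in the genuinely new-node case $z\in\NN_\ell\setminus\NN_{\ell-1}$, where several coarse-level contributions $\int_{T_{z'}^{\ell-1}}\psi_{z'}^{\ell-1}v\,ds_x$ must be assembled simultaneously and one has to match the two mesh-sizes $h_\ell(z)$ and $h_{\ell-1}(z')$; by shape regularity these are comparable, so the issue is really only combinatorial. The companion case $z\in\widetilde\NN_\ell\cap\NN_{\ell-1}$ is easier, as $J_{\ell-1}v(z)=\int_{T_z^{\ell-1}}\psi_z^{\ell-1}v\,ds_x$ is then a single integral handled by exactly the same scaling argument.
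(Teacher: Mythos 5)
Your proposal is correct and follows essentially the same route as the paper: expand $J_\ell v(z)$ and $J_{\ell-1}v(z)$ via the nodal representation~\eqref{eq:szdef}, use the scaling bound $\|\psi_{z'}^m\|_{L^\infty(T_{z'}^m)}\lesssim|T_{z'}^m|^{-1}$ from~\cite[Lemma~3.1]{sz}, apply Cauchy--Schwarz, and collect the relevant elements inside $\omega_{\ell-1}^2(z)$ with comparable mesh-sizes. The only cosmetic difference is that the paper observes that for a new node $z$ created by bisection exactly two coarse hat-functions $\eta_{z_1}^{\ell-1},\eta_{z_2}^{\ell-1}$ (the endpoints of the bisected edge) are nonzero at $z$, whereas you sum over the slightly larger, but still uniformly bounded, set $\NN_{\ell-1}\cap\omega_{\ell-1}(z)$; this costs nothing since the extra terms vanish and the cardinality is controlled by $\gamma$-shape regularity.
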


\begin{proof}
According to~\cite[Lemma~3.1]{sz}, it holds $\norm{\psi_z^\ell}{L^\infty(T_z^\ell)}
\lesssim |T_z^\ell|^{-1}$. For any node $z\in\widetilde\NN_\ell$, we have
$T_z^\ell\subseteq \omega_\ell(z)\subseteq \omega_{\ell-1}^2(z)$ and thus
\begin{align}\label{eq:szest1}
  \begin{split}
  |J_\ell v(z)| &\leq \int_{T_z^\ell} |\psi_z^\ell(x) v(x)| \,ds_x \leq
  \norm{\psi_z^\ell}{L^\infty(T_z^\ell)} |T_z^\ell|^{1/2}
  \norm{v}{L^2(T_z^\ell)} \\
  &\lesssim |T_z^\ell|^{-1/2} \norm{v}{L^2(\omega_{\ell-1}^2(z))} \lesssim
  h_\ell(z)^{-(d-1)/2} \norm{v}{L^2(\omega_{\ell-1}^2(z))}.
  \end{split}
\end{align}
For $z\in\widetilde\NN_\ell \backslash \NN_{\ell-1}$, there exist two nodes
$z_1,z_2\in\NN_{\ell-1}$ such that
\begin{align*}
  J_{\ell-1}v (z) = \eta_{z_1}^{\ell-1}(z) \int_{T_{z_1}^{\ell-1}}
  \psi_{z_1}^{\ell-1}(x)v(x) \,ds_x
  + \eta_{z_2}^{\ell-1}(z) \int_{T_{z_2}^{\ell-1}}
  \psi_{z_2}^{\ell-1}(x)v(x) \,ds_x.
\end{align*}
For $z\in\widetilde\NN_\ell\cap\NN_{\ell-1}$, 
this equality is understood with
$z_1=z$ and
$\eta_{z_2}^{\ell-1}=0$. In either case, we note that $|T_{z_i}^{\ell-1}|
\simeq h_\ell^{d-1}(z)$ as well as 
$T_{z_i}^{\ell-1}\subseteq \omega_{\ell-1}(z_i)\subseteq\omega_{\ell-1}^2(z)$.
Analogously to~\eqref{eq:szest1}, we derive
\begin{align}\label{eq:szest3}
  |J_{\ell-1}v(z)| \lesssim |T_{z_1}^{\ell-1}|^{-1/2}
  \norm{v}{L^2(T_{z_1}^{\ell-1})} + |T_{z_2}^{\ell-1}|^{-1/2}
  \norm{v}{L^2(T_{z_2}^{\ell-1})} \lesssim h_\ell(z)^{-(d-1)/2}
  \norm{v}{L^2(\omega_{\ell-1}^2(z))}.
\end{align}
Combining the triangle inequality $|(J_\ell-J_{\ell-1})v(z)| \leq |J_\ell v(z)|
+ |J_{\ell-1}v(z)|$ with~\eqref{eq:szest1}--\eqref{eq:szest3},
we prove~\eqref{eq:szest}.
\end{proof}



\begin{figure}[t]
 \centering
 \includegraphics[width=35mm]{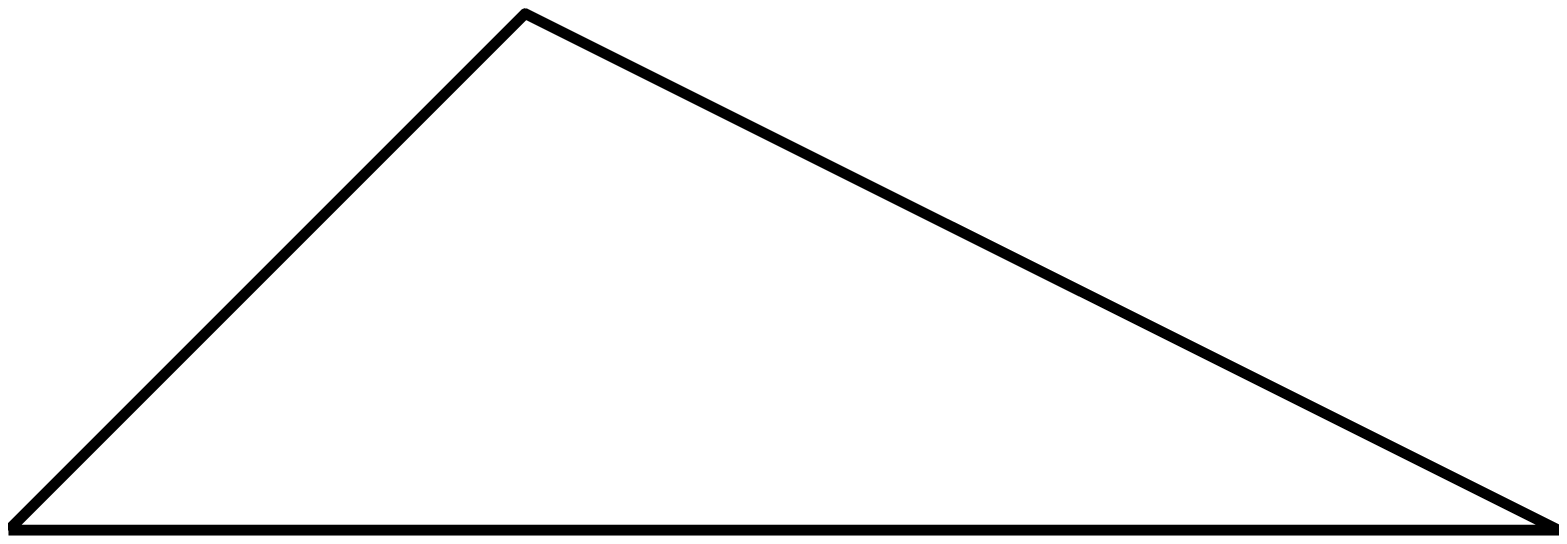}\quad
 \includegraphics[width=35mm]{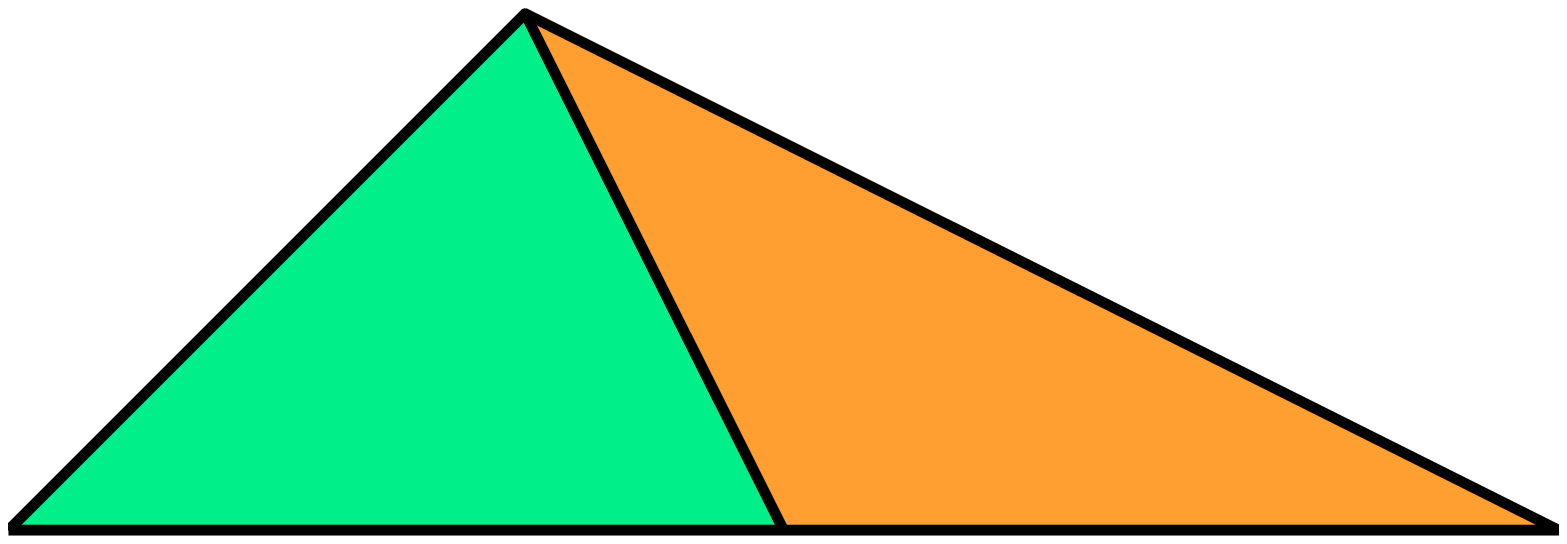}\quad
 \includegraphics[width=35mm]{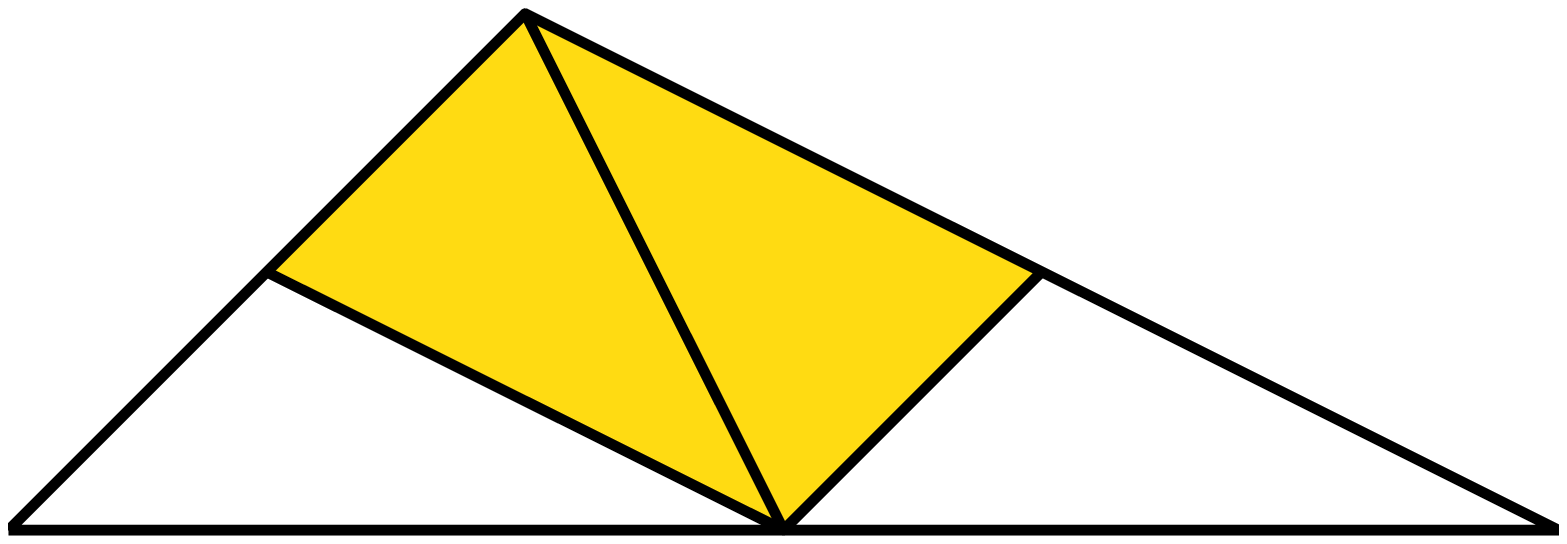}\quad
 \includegraphics[width=35mm]{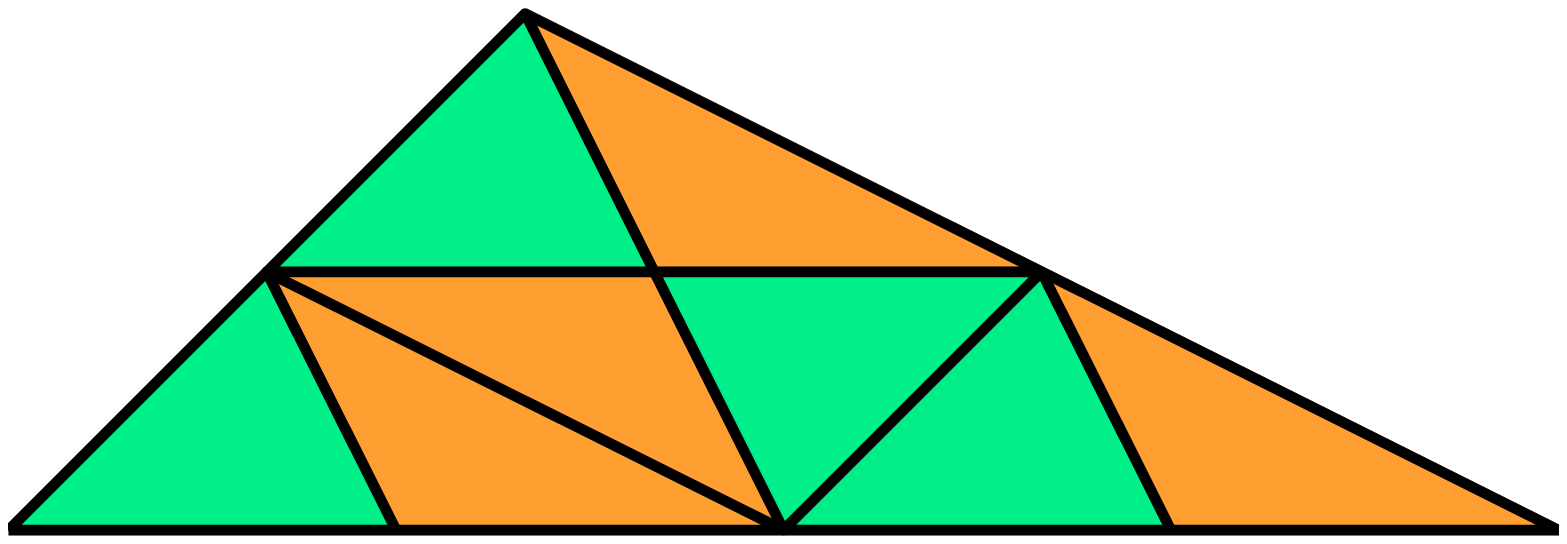}
 \caption{Refinement by newest vertex bisection leads only to
 finitely many similarity classes of triangles.
 To see this, we start from a macro element (left), where the bottom edge is the
 reference edge. Using iterated newest vertex bisection, one observes
 that only four similarity classes of triangles occur, which
 are indicated by the coloring. After three levels of bisection
 (right), no additional similarity class appears.}
 \label{fig:nvb:similarity}
\end{figure}

\begin{figure}[th]
  \begin{center}
    \psfrag{z}{\tiny $z$}
    \psfrag{zp}{\tiny $z'$}
    \psfrag{T0}{\tiny $\TT_0$}
  \includegraphics[width=0.32\textwidth]{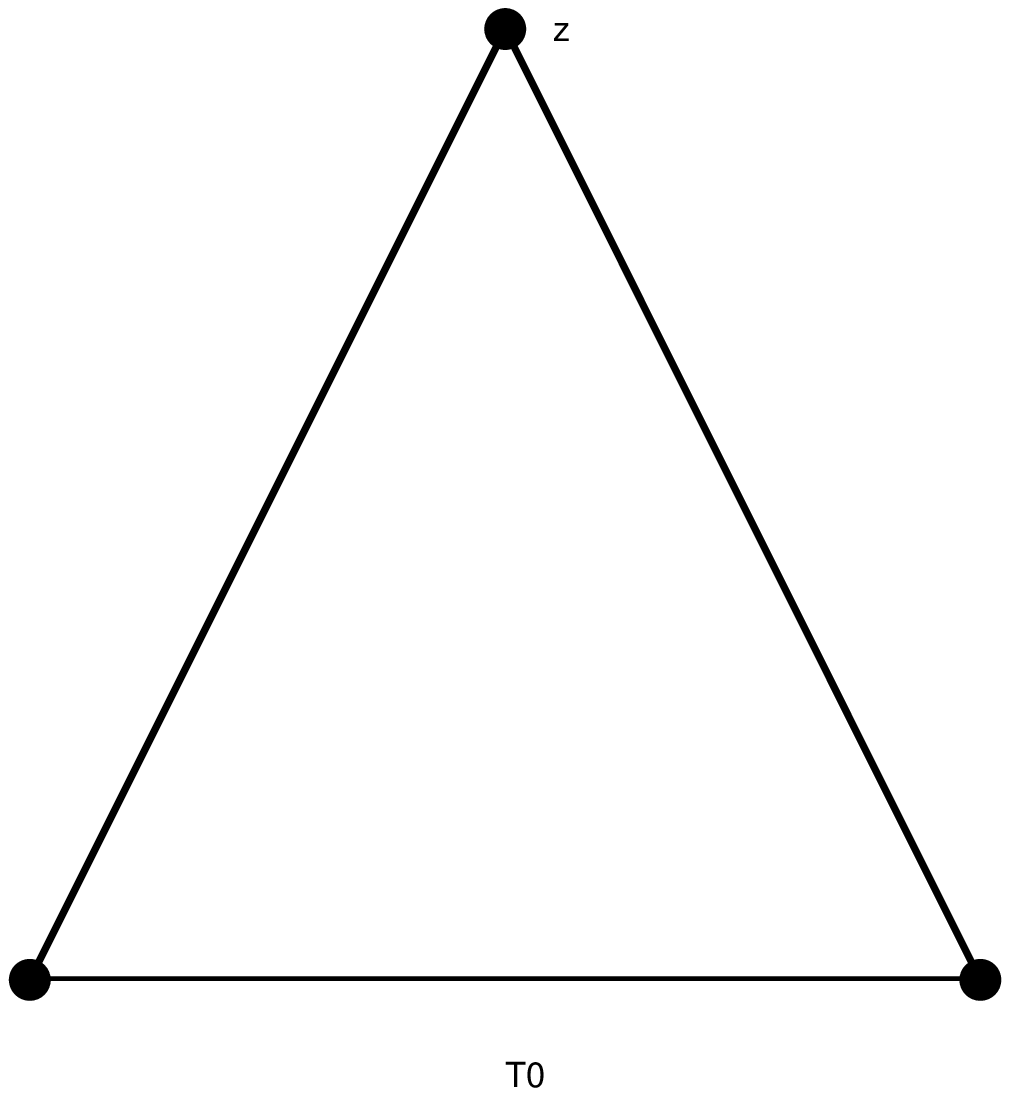}
    \psfrag{T1}{\tiny $\TT_1$}
  \includegraphics[width=0.32\textwidth]{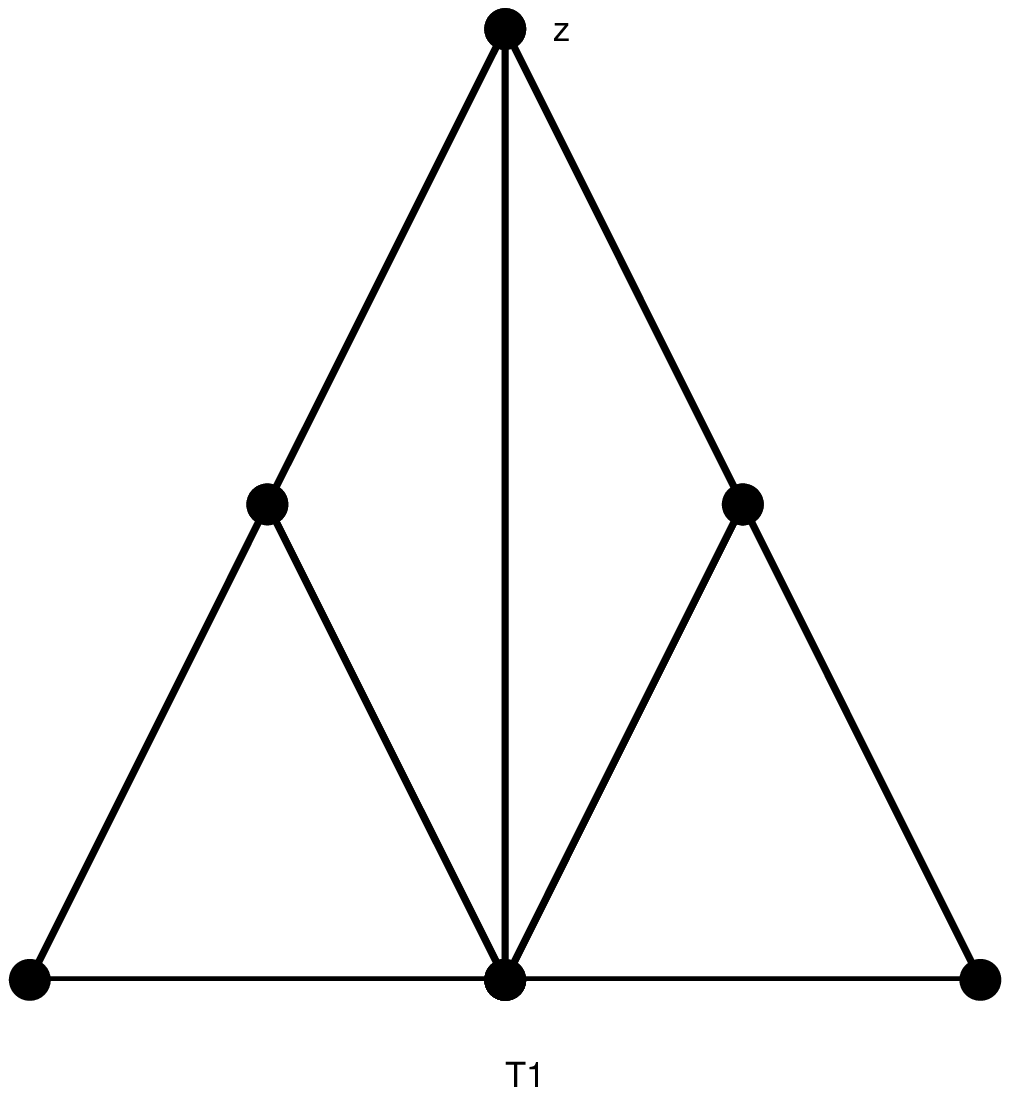}
    \psfrag{T2}{\tiny $\TT_2$}
  \includegraphics[width=0.32\textwidth]{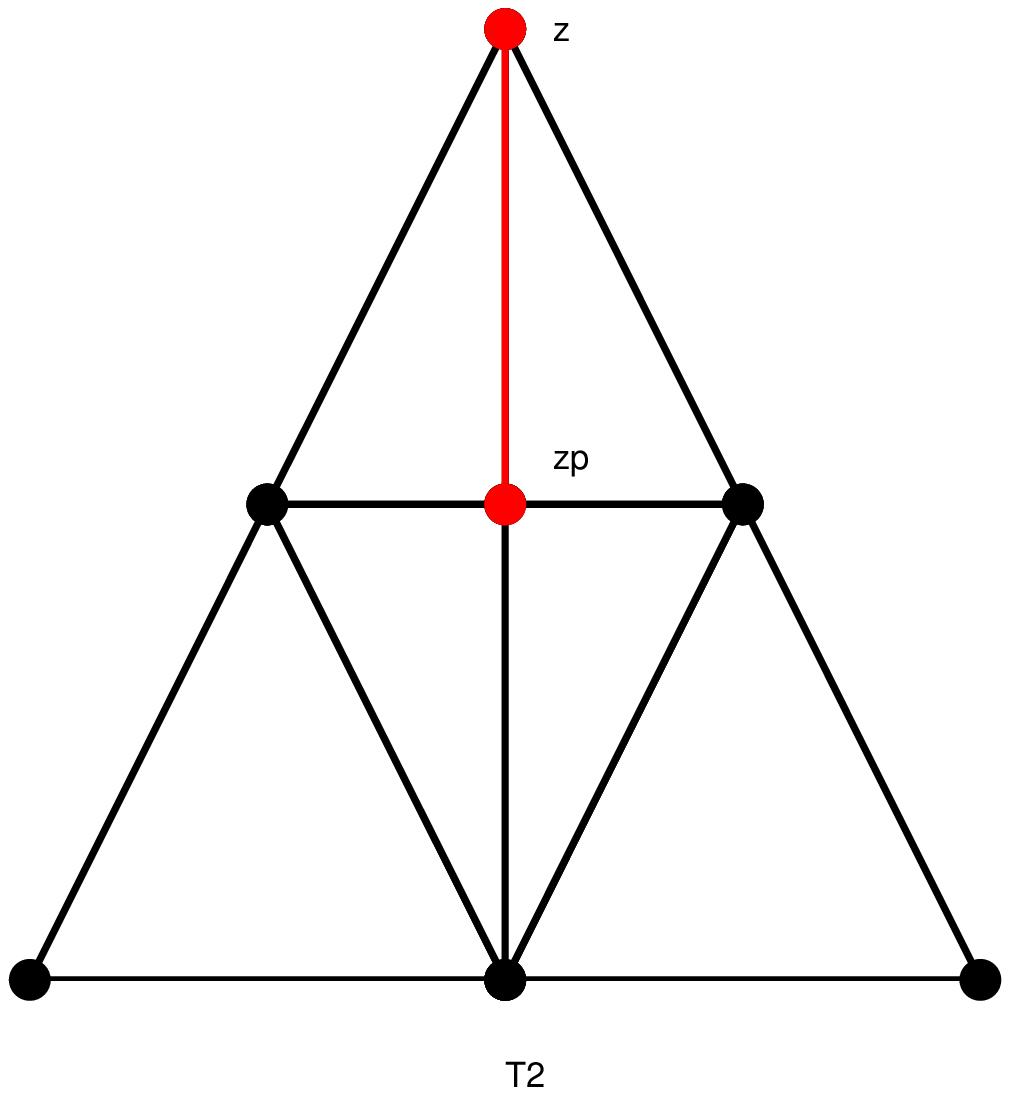}
  \caption{We consider the sequence of triangulations $\TT_0,\TT_1,\TT_2$. The
  initial triangulation $\TT_0$ consists of one triangle. The edges incident to
  $z\in\NN_0$ have length $\sqrt{5}/2 e>0$, whereas the edge opposite to $z$
  has length $e>0$. 
  The triangulation $\TT_1$ is obtained from $\TT_0$ by uniform refinement and
  $\TT_2$ is obtained from $\TT_1 = \widehat \TT_1$ by bisecting the longest edge incident to $z$.
  The shortest edge incident to $z$ in the triangulation $\TT_2$, i.e. $h_2(z)$, is
  spanned by the nodes $z,z'$ (red).
  A simple calculation shows $\level_2(z) = 1$. Obviously, $z'\notin \widehat
  \NN_1 = \widehat \NN_{\level_2(z)}$ and the corresponding
  hat-function $\eta_z^2$ is not an element of $\widehat\XX^1 =
  \widehat\XX^{\level_2(z)}$.
  This shows that the result of Lemma~\ref{lem:hatunif} cannot be improved.
  }
  \label{fig:counterEx}
\end{center}
\end{figure}

\subsection{Further auxiliary results}\label{sec:proof:aux}
The proof of Proposition~\ref{prop:as} requires some additional definitions
and technical results.
For a given node $z\in\widetilde\NN_\ell$, it may hold $z\in\widetilde\NN_{\ell+m}$ 
even with the same level $\level_\ell(z) = \level_{\ell+m}(z)$.
We count how often a node $z\in\NN_L$ with a fixed level $k\in\N_0$ shows up in
the sets $\widetilde\NN_\ell$. For $z\in\NN_L$ and $k\in\N_0$, we therefore define
\begin{align}\label{def:KK}
  \widetilde\KK_k(z) := \set{ \ell\in \{0,1,\dots,L\}}{z\in\widetilde\NN_\ell
  \quad\text{and}\quad \level_\ell(z) = k}.
\end{align}
The following lemma from~\cite[Lemma~3.1]{wuchen06} proves that the 
cardinality of these set $\widetilde\KK_k(z)$ is uniformly bounded.

\begin{lemma}\label{lem:Kbound}
For all $z\in\NN_L$ and $k\in\N_0$, it holds $\# \widetilde\KK_k(z) \leq \c{Cbound}$, and the
constant $\setc{Cbound}>0$ depends only on the initial triangulation $\TT_0$, but is independent
of $L$, $k$, and $z$.\qed
\end{lemma}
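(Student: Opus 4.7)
The plan is to reduce the claim to a strict-shrinkage argument for the patches $\omega_\ell(z)$ when $\ell$ ranges over $\widetilde\KK_k(z)$, combined with a uniform upper bound on the total area of those patches.

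First, I would extract the geometric consequences of the condition $\level_\ell(z) = k$. By Lemma~\ref{lem:unifEquivalence}, this forces $h_\ell(z)\simeq \widehat h_k = 2^{-k}\widehat h_0$, and then $\gamma$-shape regularity~\eqref{eq:shaperegular} gives $h_\ell(T)\simeq h_\ell(z)\simeq \widehat h_k$ and $|T|\simeq \widehat h_k^{d-1}$ for every $T\in\TT_\ell$ with $z\in T$. The number of elements in the patch is therefore uniformly bounded, and, in particular,
\begin{align*}
 |\omega_\ell(z)| \le C_1\,\widehat h_k^{d-1}\quad\text{for all }\ell\in\widetilde\KK_k(z),
\end{align*}
with $C_1$ depending only on $\gamma$ and $\TT_0$.

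Second, I would enumerate $\widetilde\KK_k(z)=\{\ell_1<\ell_2<\dots<\ell_s\}$ and establish the strictly decreasing chain $\omega_{\ell_1}(z)\supsetneqq\omega_{\ell_2}(z)\supsetneqq\dots\supsetneqq\omega_{\ell_s}(z)$. Monotonicity $\omega_{\ell_i}(z)\subseteq\omega_{\ell_{i-1}}(z)$ is immediate from $\TT_{\ell_i}$ being a refinement of $\TT_{\ell_{i-1}}$. For strictness, note that $i\ge 2$ forces $z\in\NN_{\ell_i}\cap\NN_{\ell_i-1}$ (since $z\in\NN_{\ell_1}\subseteq\NN_{\ell_i-1}$), so definition~\eqref{eq:defNtilde} of $\widetilde\NN_{\ell_i}$ gives $\omega_{\ell_i}(z)\subsetneqq\omega_{\ell_i-1}(z)\subseteq\omega_{\ell_{i-1}}(z)$.

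Third—and this is the decisive step—I would prove a uniform lower bound on each shrinkage,
\begin{align*}
 |\omega_{\ell_{i-1}}(z)\setminus\omega_{\ell_i}(z)| \ge c_2\,\widehat h_k^{d-1}\quad\text{for all }2\le i\le s,
\end{align*}
using the structure of newest vertex bisection (for $d=3$) respectively the 1D bisection of~\cite{cmam} (for $d=2$). The lost region consists of portions of elements $T\in\omega_{\ell_{i-1}}(z)$ that fail to leave a $z$-incident piece in $\TT_{\ell_i}$. Because every surviving $z$-incident descendant of $T$ at level $\ell_i$ is again of size $\simeq\widehat h_k$ (level constraint at $\ell_i$), only boundedly many bisections can occur on the way from $T$ down to such a descendant, and by the finite-similarity property of NVB (cf.\ Figure~\ref{fig:nvb:similarity}) those descendants occupy at most a fixed fraction of $T$. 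Hence the removed part within $T$ has area $\gtrsim |T|\simeq\widehat h_k^{d-1}$. Summing over the (at least one) such element at each step yields the claimed lower bound. Combining with the upper bound on $|\omega_{\ell_1}(z)|$ gives $(s-1)\,c_2\,\widehat h_k^{d-1}\le C_1\,\widehat h_k^{d-1}$, so $s\le 1+C_1/c_2=:\c{Cbound}$.

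The main obstacle is the third step: translating the purely set-theoretic strict inclusion into a quantitative area loss. The level constraint prevents the patch from being shaved off by arbitrarily thin slivers, but verifying this cleanly requires a case analysis of which vertex of a refined element is $z$ (only refinements that place $z$ on a reference edge actually detach area from the patch) and an invocation of the finitely many NVB similarity classes to ensure that all geometric constants are uniform in $k$, $z$, and $L$.
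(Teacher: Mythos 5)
Your strategy---monotone shrinkage of the patch $\omega_\ell(z)$ along $\ell\in\widetilde\KK_k(z)$ combined with a uniform per-step area loss---is the natural one, and is essentially the argument one expects behind the cited \cite[Lemma~3.1]{wuchen06}, to which the paper delegates the proof. Steps~1 and~2 are correct. You rightly single out step~3 as the decisive point, but the justification you sketch there is not yet sound: the sentence ``those descendants occupy at most a fixed fraction of $T$'' is false as an unconditional statement (bisecting $T$ once with $z$ opposite the reference edge leaves all of $T$ $z$-incident, so the fraction is $1$), and ``finite similarity'' by itself does not produce the missing fraction $<1$. What actually closes the gap is that bisection halves area exactly: the $z$-incident descendants $T'_1,\dots,T'_p\subseteq T$ of $T$ in $\TT_{\ell_i}$ satisfy $|T'_j|=2^{-m_j}|T|$ with $1\le m_j\le m_{\max}$, where $m_{\max}$ depends only on $\gamma$ and $\TT_0$ because $|T|\simeq\widehat h_k^{d-1}\simeq|T'_j|$ by the level constraints $\level_{\ell_{i-1}}(z)=\level_{\ell_i}(z)=k$. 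Consequently $\sum_j 2^{-m_j}$ is an integer multiple of $2^{-m_{\max}}$, so whenever it is $<1$ (i.e.\ whenever $T$ sheds area) it is $\le 1-2^{-m_{\max}}$, which yields $|T\setminus\bigcup_j T'_j|\ge 2^{-m_{\max}}|T|\gtrsim\widehat h_k^{d-1}$. Applying this to at least one such $T$ per index $i\ge 2$ (one exists, by the strict inclusion you established in step~2) supplies the constant $c_2$, and the telescoping inequality $(s-1)c_2\widehat h_k^{d-1}\le|\omega_{\ell_1}(z)|\le C_1\widehat h_k^{d-1}$ gives $s\le 1+C_1/c_2$. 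With this dyadic-area observation the case analysis on the position of $z$ relative to the reference edge, which you anticipated as the main obstacle, becomes unnecessary.
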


For each $T\in\TT_\ell$, there exists a unique coarse-mesh ancestor $T_0\in\TT_0$ 
with $T\subseteq T_0$. For $d=2$, there exists some $k\in\N_0$ such that 
$h_\ell(T) = 2^{-k} h_0(T_0)$, i.e., $T$ is created by $k$ bisections of $T_0$ and $k$
thus is the level of $T$.
For $d=3$, the successors of $T_0$ belong to at most four similarity classes, see
Figure~\ref{fig:nvb:similarity}. Let $\widetilde T_0\subset T_0$ be the unique successor 
of $T_0$ with $T\subseteq \widetilde T_0$ which is similar
to $T$ and has the maximal diameter with respect to its similarity class. 
Then, there exists $k\in\N_0$ such that
$h_\ell(T) = 2^{-k} \diam(\widetilde T_0)$. For both cases ($d=2,3$), we define
$r(T) = k$.

For each $z\in\NN_\ell$, we further define the quantity
$R_\ell(z)$ associated to the patch $\omega_{\ell-1}^2(z)$ as 
\begin{align}
  R_\ell(z) := \min\set{r(T)}{ T\in \TT_{\ell-1}, T\subseteq
  \omega_{\ell-1}^2(z)}.
\end{align}
The proof of the following lemma can be found in~\cite[Proof of
Lemma~3.3]{wuchen06} for $d=3$. For $d=2$, the proof follows the same lines.
Details are left to the reader.

\begin{lemma}\label{lem:patchlevel}
{\rm(i)} For all $z\in\NN_\ell$ holds $\level_\ell(z) \leq R_\ell(z) + \c{n}$, and the constant
$\setc{n}>0$ depends only on the initial triangulation $\TT_0$. \\
{\rm(ii)} For all $z\in\NN_\ell$ and $T\in\TT_{\ell-1}$ with $T \subseteq
\omega_{\ell-1}^2(z)$, there exists an element $\widehat T\in\widehat \TT_{R_\ell(z)}$ such
that $T\subseteq \widehat T$.\qed
\end{lemma}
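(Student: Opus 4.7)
The plan is to prove (ii) first — it is purely combinatorial — and then to obtain (i) from (ii), the $\gamma$-shape regularity of $\TT_{\ell-1}$, and a bounded-closure property of a single refinement step of newest vertex bisection (NVB). For (ii), fix $T\in\TT_{\ell-1}$ with $T\subseteq\omega_{\ell-1}^2(z)$; by the very definition of $R_\ell(z)$ one has $r(T)\geq R_\ell(z)=:k$. Newest vertex bisection is deterministic: once the reference-edge assignment is fixed on $\TT_0$ (and, for $d=3$, on one representative of each of the finitely many similarity classes, cf.\ Figure~\ref{fig:nvb:similarity}), the infinite binary bisection tree below every coarse element $T_0\in\TT_0$ — or, for $d=3$, below its maximal similar successor $\widetilde T_0$ — is completely determined. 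By construction, $\widehat\TT_k$ is exactly the set of tree-descendants at depth $k$. Since $T$ lies at depth $\geq k$ in this tree, it is nested in a unique descendant $\widehat T$ at depth $k$, and $\widehat T\in\widehat\TT_k=\widehat\TT_{R_\ell(z)}$; this proves~(ii).

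For (i), let $T^{*}\in\TT_{\ell-1}$ with $T^{*}\subseteq\omega_{\ell-1}^2(z)$ attain $r(T^{*})=R_\ell(z)$. From the definition of $r(\cdot)$ and the finiteness of $\TT_0$ — which bounds the coarse-mesh diameters $h_0(T_0)$, as well as the finitely many similarity-class diameters $\diam(\widetilde T_0)$ in $d=3$, above and below by constant multiples of $\widehat h_0$ — we get $\diam(T^{*})\simeq 2^{-R_\ell(z)}\widehat h_0$. The $\gamma$-shape regularity of $\TT_{\ell-1}$ propagates this to every $T\in\TT_{\ell-1}$ contained in $\omega_{\ell-1}^2(z)$, so $\diam(T)\simeq 2^{-R_\ell(z)}\widehat h_0$ uniformly in the patch. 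A single NVB step from $\TT_{\ell-1}$ to $\TT_\ell$, closure included, bisects each element at most $k_0=k_0(\TT_0)$ times; hence every element of $\TT_\ell$ contained in $\omega_{\ell-1}^2(z)$, in particular every element of $\TT_\ell$ incident to $z$, has diameter at least $c\,2^{-R_\ell(z)}\widehat h_0$ with $c>0$ depending only on $\TT_0$. Shape regularity of $\TT_\ell$ then yields $h_\ell(z)\gtrsim 2^{-R_\ell(z)}\widehat h_0$, and substituting this into the definition~\eqref{eq:level} of $\level_\ell(z)$ and taking the floor gives $\level_\ell(z)\leq R_\ell(z)+\c{n}$ with $\c{n}$ depending only on $\TT_0$.

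The technically subtlest ingredient is the bounded-closure fact invoked in (i): that a single NVB refinement step from $\TT_{\ell-1}$ to $\TT_\ell$ increases the generation $r(\cdot)$ of any affected element by at most a constant $k_0=k_0(\TT_0)$. In $d=2$ this is elementary, since the closure of a marked bisection is itself a single bisection of the neighbouring element. In $d=3$ it is less obvious because closure can cascade; however, it is a standard consequence of the finiteness of NVB similarity classes together with the compatible reference-edge assignment on $\TT_0$ (cf.~\cite{kpp}) that each refinement step, closure included, descends by only $O(1)$ tree levels. Granted this, the rest of~(i) reduces to the logarithmic bookkeeping indicated above, and the entire proof boils down to combining the fixed-tree structure of NVB with the finite-similarity-class property.
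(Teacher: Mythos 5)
Your conclusions are correct and the overall strategy — combinatorics for (ii), then (i) via shape regularity and a per-step depth bound — is reasonable (note that the paper offers no proof of its own here: it cites~\cite[Proof of Lemma~3.3]{wuchen06} for $d=3$ and declares $d=2$ analogous). But the depth bookkeeping in your proof of (ii) is wrong for $d=3$. A uniform refinement step $\widehat\TT_m\to\widehat\TT_{m+1}$ consists of three bisection operations producing four grandchildren, so $\widehat\TT_k$ sits at \emph{bisection} depth $2k$ below $T_0$, not $k$; while $r(T)=j$ means $T$ is similar to $\widetilde T_0$ with diameter ratio $2^{-j}$, hence area ratio $4^{-j}$, hence bisection depth $2j$ below $\widetilde T_0$ and $d_0+2j$ below $T_0$, where $d_0\in\{0,1,2,3\}$ is the depth at which $\widetilde T_0$ first occurs. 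Your asserted identification ``$\widehat\TT_k$ is exactly the set of tree-descendants at depth $k$'' thus fails whether the tree is rooted at $T_0$ or at $\widetilde T_0$, and ``$T$ at depth $\geq k$'' is not the comparison that yields the inclusion. What does prove (ii) is the inequality $d_0+2r(T)\geq 2R_\ell(z)$, valid since $r(T)\geq R_\ell(z)$ and $d_0\geq 0$: it says the bisection depth of $T$ is at least that of $\widehat\TT_{R_\ell(z)}$, so $T$ lies inside its unique ancestor at level $2R_\ell(z)$. Rewrite (ii) so that bisection depths, not ``$r$-depths'', are compared.

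For (i) your chain of estimates is sound, but the justification you give for the bounded-closure ingredient — finiteness of similarity classes plus the compatible reference-edge assignment, citing~\cite{kpp} — misattributes the mechanism. \cite{kpp} proves the \emph{amortized} closure estimate $\#\TT_L-\#\TT_0\lesssim\sum_{j}\#\MM_j$, which says nothing about how deeply a \emph{single} refinement step can bisect a \emph{single} element. The per-element bound is elementary and independent of similarity classes: the refinement routine determines the set of edges to be bisected and then, per Figure~\ref{fig:nvb:bisec}, replaces each $T\in\TT_{\ell-1}$ by at most four sons, i.e., it descends at most two bisection levels per step (one level for $d=2$). With that observation replacing the appeal to~\cite{kpp}, your derivation from $\diam(T)\simeq 2^{-R_\ell(z)}\widehat h_0$ on the $\TT_{\ell-1}$-patch $\omega_{\ell-1}^2(z)$ to $h_\ell(z)\gtrsim 2^{-R_\ell(z)}\widehat h_0$ and then to the bound on $\level_\ell(z)$ is correct.
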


Analogously to the patch $\omega_\ell^k(z)$ from~\eqref{eq:patch}, we define 
the patch $\widehat\omega_m^k(z)$ corresponding to the uniformly refined triangulation
$\widehat\TT_m$.

\begin{lemma}\label{lem:patchinclusion}
There exists $n\in\N_0$, which depends only on the initial triangulation
$\TT_0$, such that for all $z\in\NN_\ell$ holds
$\omega_\ell(z) \subseteq \omega_{\ell-1}^2(z) \subseteq
\widehat\omega_{\level_\ell(z)}^n (z)$.
\end{lemma}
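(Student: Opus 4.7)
The plan is to establish the two inclusions separately. The first one, $\omega_\ell(z) \subseteq \omega_{\ell-1}^2(z)$, is essentially immediate from the refinement structure: since $\TT_\ell$ is obtained from $\TT_{\ell-1}$ by (possibly iterated) bisection, every $T \in \TT_\ell$ is contained in a unique parent $T' \in \TT_{\ell-1}$. If in addition $z \in T$, then $z \in T'$, so $T' \in \omega_{\ell-1}(z)$ and hence $T \subseteq T' \subseteq \omega_{\ell-1}(z) \subseteq \omega_{\ell-1}^2(z)$. Taking the union over all $T \in \TT_\ell$ with $z \in T$ gives the claim.

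For the second inclusion, I would first prove the diameter estimate $\diam(\omega_{\ell-1}^2(z)) \lesssim h_\ell(z)$ and then translate it to the uniform mesh through Lemma~\ref{lem:unifEquivalence}. Fix any $T_0 \in \TT_{\ell-1}$ with $z \in T_0$. The refinement structure provides a son $T_0' \in \TT_\ell$ of $T_0$ still containing $z$, and a single bisection step together with the $\gamma$-shape regularity of $\TT_\ell$ and $\TT_{\ell-1}$ yields $h_{\ell-1}(T_0) \simeq h_\ell(T_0') \simeq h_\ell(z)$. Since every $T \in \TT_{\ell-1}$ with $T \subseteq \omega_{\ell-1}^2(z)$ is reached from $T_0$ by at most two layers of element neighbours, iterated $\gamma$-shape regularity of $\TT_{\ell-1}$ gives $h_{\ell-1}(T) \simeq h_\ell(z)$ for all such $T$, and their number is bounded solely in terms of $\gamma$. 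Hence $\diam(\omega_{\ell-1}^2(z)) \lesssim h_\ell(z)$.

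To finish, invoke Lemma~\ref{lem:unifEquivalence} to get $z \in \widehat\NN_{\level_\ell(z)}$ together with $h_\ell(z) \le \c{eq_unif_2}\,\widehat h_{\level_\ell(z)}$. Consequently, $\omega_{\ell-1}^2(z)$ is contained in a geodesic ball around $z$ on $\Gamma$ of radius $C\,\widehat h_{\level_\ell(z)}$, with $C$ depending only on $\TT_0$ and $\gamma$. Since $z$ is a vertex of the $\gamma$-shape regular quasi-uniform triangulation $\widehat\TT_{\level_\ell(z)}$ with mesh-size $\simeq \widehat h_{\level_\ell(z)}$, shape regularity supplies an integer $n \in \N_0$ (depending only on $C$ and $\gamma$, hence only on $\TT_0$) with the property that every element of $\widehat\TT_{\level_\ell(z)}$ intersecting this ball lies within graph distance $n$ of $z$. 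Equivalently, the ball is contained in $\widehat\omega_{\level_\ell(z)}^n(z)$, and combining this with the diameter estimate gives the desired inclusion $\omega_{\ell-1}^2(z) \subseteq \widehat\omega_{\level_\ell(z)}^n(z)$.

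The main obstacle is really just the diameter estimate: one has to verify that even for a newly introduced node $z \in \NN_\ell\setminus\NN_{\ell-1}$, the coarse-mesh sizes $h_{\ell-1}(T)$ throughout the patch $\omega_{\ell-1}^2(z)$ still scale like $h_\ell(z)$, so that Lemma~\ref{lem:unifEquivalence} can bridge the locally refined and the uniformly refined hierarchies cleanly. No deeper tools beyond shape regularity and that lemma are required.
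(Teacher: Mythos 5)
Your proof is correct but takes a genuinely different route from the paper. The paper establishes the second inclusion combinatorially through Lemma~\ref{lem:patchlevel}: part~(ii) yields that each $T\in\TT_{\ell-1}$ with $T\subseteq\omega_{\ell-1}^2(z)$ lies in some $\widehat T\in\widehat\TT_{R_\ell(z)}$, and part~(i) bounds $\level_\ell(z)\leq R_\ell(z)+C$, so that tracking a fixed number of uniform bisections from level $R_\ell(z)$ to level $\level_\ell(z)$ turns a second-order patch into an $n$-th order one for a fixed $n$. You instead prove the metric diameter estimate $\diam\big(\omega_{\ell-1}^2(z)\big)\lesssim h_\ell(z)$, invoke Lemma~\ref{lem:unifEquivalence} to transfer to $\widehat h_{\level_\ell(z)}$, and finish with a ball-covering argument in the quasi-uniform mesh $\widehat\TT_{\level_\ell(z)}$. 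This bypasses Lemma~\ref{lem:patchlevel}(ii) entirely, which is economical, but two points deserve emphasis. First, the chain $h_{\ell-1}(T_0)\simeq h_\ell(T_0')\simeq h_\ell(z)$ is exactly where the structural fact that a single call to $\refine$ bisects each element only a bounded number of times enters; this is the substance of Lemma~\ref{lem:patchlevel}(i) (cited from \cite{wuchen06}), so you are implicitly reproving part of it rather than doing without it. Second, the final step — that a geodesic ball of radius $C\,\widehat h_m$ about a node of $\widehat\TT_m$ lies in $\widehat\omega_m^n(z)$ for $n$ depending only on $C$ and shape regularity — is a standard but nontrivial covering property of quasi-uniform meshes on a fixed compact polygonal/polyhedral $\Gamma$ (one must rule out, e.g., elements that are metrically close but combinatorially far; this holds here because $\Gamma$ is fixed and uniform refinement resolves its geometry beyond some fixed level, with the finitely many small levels absorbed into the constant). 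The paper's combinatorial route avoids this geometric subtlety altogether, while yours is arguably more elementary in that it only leans on shape regularity and Lemma~\ref{lem:unifEquivalence}; both arrive at the same conclusion.
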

\begin{proof}
Obviously, $\omega_\ell(z) \subseteq \omega_{\ell-1}(z) \subseteq
\omega_{\ell-1}^2(z)$ by definition of these patches.
It remains to prove the second inclusion $\omega_{\ell-1}^2(z) \subseteq
\widehat\omega_{\level_\ell(z)}^n(z)$:
Lemma~\ref{lem:patchlevel} states that for $T\in \TT_{\ell-1}$ with $T\subseteq \omega_{\ell-1}^2(z)$, there
exists $\widehat T \in \widehat\TT_{R_\ell(z)}$ with $T\subseteq \widehat T \subseteq 
\widehat \omega_{R_\ell(z)}^2(z)$.
Each element $\widehat T \in \TT_{R_\ell(z)}$ with $\widehat T\subseteq \widehat
\omega_{R_\ell(z)}^2(z)$ is bisected into $2(d-1)\c{n}$ elements $\widehat
T'_j \in \widehat \TT_{R_\ell(z)+ \c{n}}$ such that
\begin{align}
  \widehat T = \bigcup_{j=1}^{2(d-1)\c{n}} \widehat T'_j.
\end{align}
In particular, there exists a constant $n\in \N$ with $n\leq 4(d-1)\c{n}$
such that $\widehat T \subseteq \widehat\omega_{R_\ell(z)+\c{n}}^n(z)$.
Lemma~\ref{lem:patchlevel} states that $\level_\ell(z) \leq R_\ell(z) + \c{n}$.
Hence, $\widehat\omega_{R_\ell(z)+\c{n}}^n(z) \subseteq
\widehat\omega_{\level_\ell(z)}^n(z)$ by the definition of the patches.
\end{proof}

\begin{lemma}\label{lem:hatunif}
For all nodes $z\in\NN_\ell$ holds $\eta_z^\ell \in \widehat
\XX^{m+1}$ with $m = \level_\ell(z)$.
\end{lemma}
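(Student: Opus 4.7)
Since $\eta_z^\ell \in \SS^1(\TT_\ell) \subset C(\Gamma)$ is continuous on $\Gamma$ and supported in $\omega_\ell(z)$, membership in $\widehat\XX^{m+1} = \SS^1(\widehat\TT_{m+1})$ reduces to proving that $\eta_z^\ell$ is affine on every $\widehat T \in \widehat\TT_{m+1}$. I would reformulate this geometrically as the single claim that
\begin{align*}
 \text{every } T \in \TT_\ell \text{ with } z \in T \text{ is a union of elements of } \widehat\TT_{m+1}.
\end{align*}
Granted this, let $\widehat T \in \widehat\TT_{m+1}$: if $\widehat T$ has nonempty interior intersection with $\omega_\ell(z)$, then it has nonempty interior intersection with some $T \in \TT_\ell$ incident to $z$, and since $T$ is a union of elements of the regular triangulation $\widehat\TT_{m+1}$ in which $\widehat T$ itself lies, we must have $\widehat T \subseteq T$ and therefore $\eta_z^\ell|_{\widehat T}$ is affine. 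Otherwise $\widehat T$ is disjoint from the open patch and $\eta_z^\ell|_{\widehat T} = 0$.

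To establish the geometric claim I would start by extracting from~\eqref{eq:level} together with the Gaussian floor the lower bound
\begin{align*}
 h_\ell(z) > 2^{-(m+1)}\widehat h_0 = \widehat h_{m+1}.
\end{align*}
Combining this with $\gamma$-shape regularity and the definition of $h_\ell(z)$ as the shortest edge incident to $z$, every $T \in \TT_\ell$ with $z \in T$ satisfies $h_\ell(T) \ge h_\ell(z) > \widehat h_{m+1}$. A size-versus-level comparison, exploiting the bisection structure of $\TT_\ell$, would then identify $T$ as an element of some $\widehat\TT_k$ with $k \le m+1$; since $\widehat\TT_{m+1}$ is a refinement of $\widehat\TT_k$, this forces $T$ to be a union of elements of $\widehat\TT_{m+1}$, as required.

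For $d=2$, every $T \in \TT_\ell$ has length $2^{-j}h_0(T_0)$ with coarse-mesh ancestor $T_0 \in \TT_0$, so $T$ is literally an element of $\widehat\TT_j$; the inequality $h_\ell(T) > 2^{-(m+1)}\widehat h_0 \ge 2^{-(m+1)}h_0(T_0)$ then forces $j \le m$. For $d=3$, the representation $h_\ell(T) = 2^{-r(T)}\diam(\widetilde T_0)$ from Section~\ref{sec:proof:aux}, combined with the fact (see Figure~\ref{fig:nvb:similarity}) that newest vertex bisection produces only finitely many similarity classes of descendants of $T_0$, places $\widetilde T_0$ into some $\widehat\TT_{s_0}$ with $s_0$ uniformly bounded; the analogous size comparison yields $T \in \widehat\TT_k$ with $k \le m+1$, and the example in Figure~\ref{fig:counterEx} shows that the constant $m+1$ cannot be improved to $m$ in general.

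The main obstacle is the 3D analysis, where an element produced by non-uniform NVB does not in general coincide with an element of the uniform hierarchy $\widehat\TT_k$: the similarity-class offset has to be absorbed into the level count. Lemma~\ref{lem:patchlevel} and the underlying construction of $r(T)$ provide precisely the comparison between the intrinsic level of $T$ and the level of the uniform mesh containing it, and once this is in hand the size comparison above, together with the geometric reformulation, closes the proof.
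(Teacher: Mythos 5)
You take a genuinely different route from the paper. The paper works at the level of nodes and edges: it shows that every $z'\in\NN_\ell\cap\omega_\ell(z)$ lies in $\widehat\NN_{m+1}$ by noting that each edge $E_{zz'}$ incident to $z$ is a bisection descendant of an original edge $E_0\subseteq T_0$ or of a median $E_1$, and comparing lengths separately in each case. You instead reformulate the claim as the element-level statement that every $T\in\TT_\ell$ with $z\in T$ is a union of elements of $\widehat\TT_{m+1}$. This reformulation is cleaner and makes explicit what the paper leaves implicit (membership $\eta_z^\ell\in\widehat\XX^{m+1}$ really hinges on the patch elements being resolved by $\widehat\TT_{m+1}$, not merely on the patch nodes lying in $\widehat\NN_{m+1}$); your reduction in the first paragraph and your $d=2$ argument are sound and essentially match the paper's length comparison.

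The $d=3$ case, however, contains a genuine gap. You claim the size comparison "identif[ies] $T$ as an element of some $\widehat\TT_k$" and conclude $T\in\widehat\TT_k$ with $k\le m+1$. This is false: a triangle produced by one or two non-uniform NVB bisections of a macro element belongs to a different similarity class (see Figure~\ref{fig:nvb:similarity}) and therefore coincides with no element of any uniform mesh $\widehat\TT_k$, whose elements are all similar to the macro ancestor $T_0$. You recognise this obstacle in your closing paragraph, but the proposed resolution via $r(T)$ and Lemma~\ref{lem:patchlevel} is still aimed at placing $T$ itself into the uniform hierarchy, which remains the wrong target (the same objection also kills the intermediate assertion that $\widetilde T_0\in\widehat\TT_{s_0}$, since $\widetilde T_0$ need not be similar to $T_0$). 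What you actually need, and what is true, is that $T$ is a \emph{union} of elements of $\widehat\TT_{m+1}$. This follows because the NVB bisection tree of $T_0$ is refinement-order independent: an element at generation $s$ (reached by $s$ bisections from $T_0$) is the union of all its generation-$3k$ descendants, hence a union of elements of $\widehat\TT_k$ precisely when $3k\ge s$. The relation $h_\ell(T)=2^{-r(T)}\diam(\widetilde T_0)$, your size bound $h_\ell(T)>\widehat h_{m+1}$, and $\diam(\widetilde T_0)\le\widehat h_0$ give $r(T)\le m$, and since the generation of $\widetilde T_0$ is bounded by a constant, this yields $s\le 3(m+1)$ so that $k=m+1$ suffices. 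The paper avoids this subtlety altogether by reasoning about edges rather than triangles, where the analogous classification into descendants of $E_0$ versus descendants of a median $E_1$ is handled explicitly by the two cases (i) and (ii).
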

\begin{proof}[Proof for $d=2$]
We prove that all nodes $z'\in \NN_\ell\cap \omega_\ell(z) \backslash \{z\}$
satisfy $z' \in \widehat\NN_m$. This implies $\eta_z^\ell \in \widehat
\XX^m$. 
Let $T_{zz'} \in \TT_\ell$ denote the element which is spanned
by the nodes $z,z'$. According to bisection, there exists a coarse mesh-element $T_0 \in\TT_0$ and $\widehat n \in \N_0$ 
such that $T_{zz'} \in \widehat\XX^{\widehat n}$ with $T_{zz'} \subseteq T_0$,
and $|T_{zz'}| = |T_0| \, 2^{-\widehat n}$.
The definition of $\level_\ell(z)$ and $h_\ell(z)$ yields
\begin{align*}
  2^{-m-1} \widehat h_0 < h_\ell(z) \leq |T_{zz'}| = |T_0| \, 2^{-\widehat n} \leq
  \widehat h_0 2^{-\widehat n}
\end{align*}
and consequently $m+1> \widehat n$. Thus $m\geq \widehat n$, and we conclude
$z'\in\widehat \NN_m \subseteq \widehat\NN_{m+1}$.
\end{proof}
\begin{proof}[Proof for $d=3$]
Let $E_{zz'}$ denote the
edge between $z\in\NN_\ell$ and $z'\in\NN_\ell \cap \omega_\ell(z) \backslash
\{z\}$. There exists $\widehat n \in \N_0$ such that $E_{zz'}$ is an edge of $\widehat
\XX^{\widehat n}$. Furthermore, there exist edges $E_0,E_1$ with $E_0 \subseteq
T_0 \in \TT_0$, $E_1\subseteq T_1 \in \TT_1$ and $E_1$ is a median of the
macro element $T_0$ such that one of the following
cases holds:
\begin{itemize}
  \item[(i)] $|E_{zz'}| = |E_0| \, 2^{-\widehat n}$ or
  \item[(ii)] $|E_{zz'}| = |E_1| \, 2^{-\widehat n+1}$.
\end{itemize}
As in the case $d=2$, we obtain $z' \in \widehat \NN_m$ for (i) and $z'\in\widehat
\NN_{m+1}$ for (ii).
Since $\widehat\NN_m \subseteq \widehat\NN_{m+1}$, this proves $z'\in \widehat\NN_{m+1}$ for all $z'\in\NN_\ell \cap
\omega_\ell(z)$. Hence, $\eta_z^\ell \in \widehat \XX^{m+1}$.
\end{proof}
In general, the result of the previous Lemma cannot be improved in the sense that there
exists a number $k\in\N_0$ with $k<m+1$ such that $\eta_z^\ell \in \widehat\XX^k$.
See also Figure~\ref{fig:counterEx} for an example.

%

\subsection{Proof of Proposition~\ref{prop:as}, lower bound}
\label{section:lower bound}
In this section, we prove the lower bound in the spectral equivalence
estimate~\eqref{eq:prop:PAS}. The proof relies on the 
following result which is also known as Lions' lemma~\cite{lions88,wid89}.

\begin{lemma}\label{lem:stabledec}
Suppose that each $v\in\XX^L$ admits a representation $v = \sum_{\ell=0}^L
\sum_{z\in\widetilde\NN_\ell} v_z^\ell$ with $v_z^\ell
\in\XX_z^\ell$ and
\begin{align}
  \sum_{\ell=0}^L \sum_{z\in\widetilde\NN_\ell} \enorm{v_z^\ell}^2 \leq
  c^{-1} \enorm{v}^2.
\end{align}
Then, $\PAStilde^L$ is elliptic
\begin{align}
  c\, \enorm{v}^2 \leq \edual{\PAStilde^L v}v
 \quad\text{for all }v\in\XX^L,
\end{align}
and the minimal eigenvalue of the additive Schwarz
operator satisfies $\evmin(\PAStilde^L) \geq c > 0$.
\qed
\end{lemma}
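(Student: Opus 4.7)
The plan is to prove the classical Lions/Widlund characterization of the additive Schwarz operator by exploiting two facts: first, that $\prec_z^\ell$ is the $\edual\cdot\cdot$-orthogonal projection onto the one-dimensional space $\XX_z^\ell$; second, that the defining bilinear form is symmetric positive definite on $\XX^L$ (Lemma~\ref{lem:propPAS}). The ellipticity estimate on $\PAStilde^L$ then pops out by a single Cauchy--Schwarz argument once the decomposition hypothesis is used.

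Concretely, fix $v\in\XX^L$ and a stable decomposition $v=\sum_{\ell=0}^L\sum_{z\in\widetilde\NN_\ell}v_z^\ell$ with $v_z^\ell\in\XX_z^\ell$ satisfying the stability bound $\sum_{\ell,z}\enorm{v_z^\ell}^2\le c^{-1}\enorm{v}^2$. The first step is to replace $v$ inside $\edual{v}{v_z^\ell}$ by its projection. Since $v_z^\ell\in\XX_z^\ell$ and $\prec_z^\ell$ is the $\edual\cdot\cdot$-orthogonal projection onto $\XX_z^\ell$, the Galerkin orthogonality~\eqref{eq:defproj} gives $\edual{v}{v_z^\ell}=\edual{\prec_z^\ell v}{v_z^\ell}$. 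Summing this identity over all $\ell$ and $z\in\widetilde\NN_\ell$, and using the decomposition of $v$, yields
\begin{align*}
  \enorm{v}^2 \;=\; \edual{v}{v} \;=\; \sum_{\ell=0}^L\sum_{z\in\widetilde\NN_\ell}\edual{v}{v_z^\ell}
  \;=\; \sum_{\ell=0}^L\sum_{z\in\widetilde\NN_\ell}\edual{\prec_z^\ell v}{v_z^\ell}.
\end{align*}

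Applying Cauchy--Schwarz (first in each summand, then in the double sum) together with the stability hypothesis gives
\begin{align*}
  \enorm{v}^2
  \;\le\; \Big(\sum_{\ell,z}\enorm{\prec_z^\ell v}^2\Big)^{1/2}
  \Big(\sum_{\ell,z}\enorm{v_z^\ell}^2\Big)^{1/2}
  \;\le\; c^{-1/2}\,\enorm{v}\,\Big(\sum_{\ell,z}\enorm{\prec_z^\ell v}^2\Big)^{1/2}.
\end{align*}
Next I would rewrite the remaining sum using once more the orthogonal-projection property: $\enorm{\prec_z^\ell v}^2=\edual{\prec_z^\ell v}{\prec_z^\ell v}=\edual{v}{\prec_z^\ell v}$, since $\prec_z^\ell v\in\XX_z^\ell$. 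Summing and invoking the definition of $\PAStilde^L=\sum_{\ell,z}\prec_z^\ell$ then identifies
\begin{align*}
  \sum_{\ell=0}^L\sum_{z\in\widetilde\NN_\ell}\enorm{\prec_z^\ell v}^2 \;=\; \edual{v}{\PAStilde^L v}.
\end{align*}
Inserting this into the previous inequality and squaring yields $c\,\enorm{v}^2\le\edual{\PAStilde^L v}{v}$, which is the claimed ellipticity. The eigenvalue bound $\evmin(\PAStilde^L)\ge c$ then follows immediately, because Lemma~\ref{lem:propPAS} shows that $\PAStilde^L$ is symmetric and positive definite with respect to $\edual\cdot\cdot$, so its minimal eigenvalue is given by the Rayleigh quotient $\inf_{v\ne0}\edual{\PAStilde^L v}{v}/\enorm{v}^2$.

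There is no serious obstacle here; the only step requiring a moment's thought is the double application of the projection identity $\edual{v}{\cdot}=\edual{\prec_z^\ell v}{\cdot}$ on $\XX_z^\ell$, which converts one occurrence of $v$ into $\prec_z^\ell v$ (enabling Cauchy--Schwarz) and, afterwards, allows the sum of projected energies to be collapsed into $\edual{\PAStilde^L v}{v}$. The nontrivial content of the overall result is entirely packed into verifying the hypothesis of this lemma (i.e., constructing a stable decomposition), which is the task of Section~\ref{section:lower bound} using the Scott--Zhang projection of Section~\ref{sec:sz}; the present lemma is an abstract Hilbert-space statement whose proof is purely formal.
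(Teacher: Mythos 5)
Your proof is correct. It is the standard argument behind Lions' lemma (Galerkin orthogonality of each $\prec_z^\ell$, one Cauchy--Schwarz in the summand and one over the index set, then recognizing $\sum_{\ell,z}\enorm{\prec_z^\ell v}^2=\edual{\PAStilde^L v}{v}$), which is exactly what the cited references carry out; the paper itself marks the lemma as standard and omits the proof, so there is nothing to compare beyond noting that you have filled in the omitted argument correctly.
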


\begin{proof}[Proof of lower bound in~\eqref{eq:prop:PAS}]
Let $v\in\XX^L$ and set $J_{-1}:=0$. With the property~\eqref{eq:szprop1b} of the 
Scott-Zhang projection $J_\ell$, we define
\begin{align}
 \widetilde v^\ell := (J_\ell-J_{\ell-1})v \in \widetilde\XX^\ell
 \quad\text{for all } 0\leq\ell\leq L.
\end{align}
The projection property of $J_L$ and the telescoping series prove
\begin{align}
 v = J_Lv = (J_L-J_{-1})v = \sum_{\ell=0}^L \widetilde v^\ell.
\end{align}
We further decompose $v$ into
\begin{align}\label{eq:lb1}
  v = \sum_{\ell=0}^L \sum_{z\in\widetilde\NN_\ell} \widetilde v^\ell(z)\eta_z^\ell =:
  \sum_{\ell=0}^L\sum_{z\in\widetilde\NN_\ell} v_z^\ell 
  \quad\text{with } v_z^\ell \in\XX_z^\ell.
\end{align}
Let $z\in\NN_\ell$. According to the properties~\eqref{eq:hatfunprop} of the hat-functions, standard 
interpolation techniques yield
\begin{align}\label{eq:lb2}
 \enorm{\eta_z^\ell}^2
 \simeq \norm{\eta_z^\ell}{H^{1/2}(\Gamma)}^2
 \le \norm{\eta_z^\ell}{L^2(\Gamma)}\norm{\eta_z^\ell}{H^1(\Gamma)}
 \lesssim |\omega_\ell(z)|\,h_\ell(z)^{-1} 
 \simeq h_\ell(z)^{d-2}.
\end{align}
This implies
\begin{align*}
 \enorm{v_z^\ell}^2
 \lesssim h_\ell(z)^{d-2}|(J_\ell-J_{\ell-1})v(z)|^2.
\end{align*}
We set $\widehat\Pi_m := \widehat\Pi_0$ for $m<0$. 
Lemma~\ref{lem:patchlevel} yields
that $\widehat\Pi_{\level_\ell(z)-\c{n}} v \in \widehat\XX_{R_\ell(z)}$
and that $(\widehat\Pi_{\level_\ell(z)-\c{n}} v)|_T$ is linear for all
$T\in\TT_{\ell-1}$ with $T\subseteq \omega_{\ell-1}^2(z)$.
The Scott-Zhang projection preserves linearity on all elements
$T\in\TT_{\ell-1}$ resp. $T\in\TT_\ell$ with $z\in T\subseteq
\omega_{\ell-1}^2(z)$. This and Lemma~\ref{lem:szest} yield
\begin{align*}
 |(J_\ell-J_{\ell-1})v(z)|^2 &=
 |(J_\ell-J_{\ell-1})(v-\widehat\Pi_{\level_\ell(z)-\c{n}}v)(z)|^2
 \\&\lesssim h_\ell(z)^{-(d-1)}\,
 \norm{v-\widehat\Pi_{\level_\ell(z)-\c{n}}v}{L^2(\omega_{\ell-1}^2(z))}^2.
\end{align*}
Combining the last two estimates, we obtain
\begin{align*}
  \enorm{v^\ell_z}^2
 \lesssim h_\ell(z)^{-1}\,
 \norm{v-\widehat\Pi_{\level_\ell(z)-\c{n}}v}{L^2(\omega_{\ell-1}^2(z))}^2
\end{align*}
Using the equivalence $h_\ell(z) \simeq \widehat h_{\level_\ell(z)}$ from 
Lemma~\ref{lem:unifEquivalence}, we get
\begin{align*}
  \sum_{\ell=0}^L \sum_{z\in\widetilde\NN_\ell} \enorm{v_z^\ell}^2
  &\lesssim \sum_{\ell=0}^L \sum_{z\in\widetilde\NN_\ell}
  h_\ell(z)^{-1} \norm{v-\widehat\Pi_{\level_\ell(z)-\c{n}}v}{L^2(\omega_{\ell-1}^2(z))}^2
  \\
  &\simeq \sum_{\ell=0}^L \sum_{z\in\widetilde\NN_\ell} \widehat h_{\level_\ell(z)}^{-1}
  \norm{v-\widehat\Pi_{\level_\ell(z)-\c{n}}v}{L^2(\omega_{\ell-1}^2(z))}^2
  \\
  &= \sum_{m=0}^\infty \sum_{\ell=0}^L \sum_{\substack{z\in\widetilde\NN_\ell\\ \level_\ell(z) = m}} 
  \widehat h_m^{-1} \norm{v-\widehat\Pi_{m-\c{n}}v}{L^2(\omega_{\ell-1}^2(z))}^2.
\end{align*}
With Lemma~\ref{lem:patchinclusion} and the definition~\eqref{def:KK} of
$\widetilde\KK_m(z)$, we see
\begin{align*}
  \sum_{m=0}^\infty \sum_{\ell=0}^L \!\!\sum_{\substack{z\in\widetilde\NN_\ell\\ \level_\ell(z) = m}}\!\!
  \widehat h_m^{-1} \norm{v-\widehat\Pi_{m-\c{n}}v}{L^2(\omega_{\ell-1}^2(z))}^2
  &\lesssim \sum_{m=0}^\infty \sum_{\ell=0}^L \!\!\sum_{\substack{z\in\widetilde\NN_\ell\\ \level_\ell(z) = m}} \!\!
  \widehat h_m^{-1} \norm{v-\widehat\Pi_{m-\c{n}}v}{L^2(\widehat\omega_m^n(z))}^2
  \\ 
  &= \sum_{m=0}^\infty \sum_{z\in\NN_L} \sum_{\ell\in \widetilde\KK_m(z)} 
  \widehat h_m^{-1}\norm{v-\widehat\Pi_{m-\c{n}}v}{L^2(\widehat\omega_m^n(z))}^2.
\end{align*}
For $z\in\NN_\ell$ with $\level_\ell(z) = m$,
Lemma~\ref{lem:unifEquivalence} states $z\in\widehat\NN_m$. 
This and $\#\widetilde\KK_m(z)\le\c{Cbound}$ from Lemma~\ref{lem:Kbound} give
\begin{align*}
  \sum_{m=0}^\infty \sum_{z\in\NN_L} \sum_{\ell\in \widetilde\KK_m(z)} 
  \widehat h_m^{-1} \norm{v-\widehat\Pi_{m-\c{n}}v}{L^2(\widehat\omega_m^n(z))}^2
  &= \sum_{m=0}^\infty \sum_{z\in\NN_L\cap \widehat\NN_m}
  \sum_{\ell\in\widetilde\KK_m(z)} 
  \widehat h_m^{-1} \norm{v-\widehat\Pi_{m-\c{n}}v}{L^2(\widehat\omega_m^n(z))}^2
  \\ 
  &\lesssim \sum_{m=0}^\infty \sum_{z\in\NN_L\cap \widehat\NN_m}
  \widehat h_m^{-1} \norm{v-\widehat\Pi_{m-\c{n}}v}{L^2(\widehat\omega_m^n(z))}^2
  \\ 
  &\leq \sum_{m=0}^\infty \sum_{z\in\widehat\NN_m}
  \widehat h_m^{-1}\norm{v-\widehat\Pi_{m-\c{n}}v}{L^2(\widehat\omega_m^n(z))}^2.
\end{align*}
Uniform $\gamma$-shape regularity of $\widehat\TT_m$ and the definition $\widehat\Pi_m=\widehat\Pi_0$ 
for $m<0$ yield
\begin{align*}
 \sum_{m=0}^\infty \sum_{z\in\widehat\NN_m}
  \widehat h_m^{-1}\norm{v-\widehat\Pi_{m-\c{n}}v}{L^2(\widehat\omega_m^n(z))}^2
 &\lesssim \sum_{m=0}^\infty\widehat h_m^{-1}\norm{v-\widehat\Pi_{m-\c{n}}v}{L^2(\Gamma)}^2
 \lesssim \sum_{m=0}^\infty\widehat h_m^{-1}\norm{v-\widehat\Pi_mv}{L^2(\Gamma)}^2
\end{align*}
We combine the last four estimates with 
Lemma~\ref{lem:h12normest} and norm equivalence on $H^{1/2}(\Gamma)$ to see
\begin{align*}
 \sum_{\ell=0}^L \sum_{z\in\widetilde\NN_\ell} \enorm{\widetilde v_z^\ell}^2
 \lesssim \sum_{m=0}^\infty\widehat h_m^{-1}\norm{v-\widehat\Pi_mv}{L^2(\Gamma)}^2
 \lesssim \norm{v}{H^{1/2}(\Gamma)}^2
 \simeq\enorm{v}^2.
\end{align*}
This and Lemma~\ref{lem:stabledec} then conclude the proof of the lower bound 
in~\eqref{eq:prop:PAS}.
\end{proof}

%
%

\subsection{Proof of Proposition~\ref{prop:as}, upper bound}
\label{section:upper bound}
In this section, we prove the upper bound in the spectral equivalence
estimate~\eqref{eq:prop:PAS}.

Let $M: = \max_{z\in\NN_L} \level_L(z)$ denote the maximal level of all nodes
$z\in\NN_L$ and note that Lemma~\ref{lem:unifEquivalence} yields $\NN_L\subseteq\widehat\NN_M$
and hence $\XX^L\subseteq\widehat\XX^M$.
We rewrite the additive Schwarz operator $\PAStilde^L$ as
\begin{align}\label{eq:pasAlt}
  \PAStilde^L = \sum_{\ell=0}^L \sum_{z\in\widetilde\NN_\ell}
  \prec_z^\ell  =  \sum_{m=0}^M \widetilde\QQ_m^L
  \quad\text{with}\quad
  \widetilde\QQ_m^L :=
  \sum_{\ell=0}^L \sum_{\substack{z\in\widetilde\NN_\ell \\
  \level_\ell(z) = m}} \prec_z^\ell.
\end{align}
There holds the following strengthened Cauchy-Schwarz inequality.

\begin{lemma}\label{lem:hypscs}
For all $0\leq m\leq M$, $k\leq m+1$
\begin{align}\label{eq:hypscs}
  0 \le \edual{\widetilde\QQ^L_m \widehat v^k}{\widehat v^k} 
 \le\c{csu}
  2^{-(m+1-k)} \enorm{\widehat v^k}^2
  \quad\text{for all }\widehat v^k\in\widehat\XX^k.
\end{align}
The constant $\setc{csu}>0$ depends only on $\Gamma$ and the initial triangulation $\TT_0$.
\end{lemma}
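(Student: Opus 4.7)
The lower bound is immediate: each $\prec_z^\ell$ is the $\edual{\cdot}{\cdot}$-orthogonal projection onto $\XX_z^\ell$, hence $\edual{\prec_z^\ell v}{v} = \enorm{\prec_z^\ell v}^2 \ge 0$, and summing preserves non-negativity. For the upper bound, the explicit formula~\eqref{eq:Pz} yields
\begin{align*}
  \edual{\widetilde\QQ^L_m \widehat v^k}{\widehat v^k}
  = \sum_{\ell=0}^L \sum_{\substack{z\in\widetilde\NN_\ell \\ \level_\ell(z)=m}}
  \frac{|\edual{\widehat v^k}{\eta_z^\ell}|^2}{\enorm{\eta_z^\ell}^2},
\end{align*}
so the task reduces to bounding this double sum.

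The plan is to split $\edual{\widehat v^k}{\eta_z^\ell} = \dual{\hyp \widehat v^k}{\eta_z^\ell}_\Gamma + \dual{\widehat v^k}{1}_\Gamma \dual{\eta_z^\ell}{1}_\Gamma$ and estimate each piece in a \emph{patch-local} manner. For the hypersingular term, I would use that $\hyp:H^1(\Gamma)\to L^2(\Gamma)$ is bounded (which applies since $\widehat v^k\in\widehat\XX^k\subset H^1(\Gamma)$) together with Cauchy--Schwarz in $L^2$:
\begin{align*}
  |\dual{\hyp \widehat v^k}{\eta_z^\ell}_\Gamma|^2
  \le \norm{\hyp\widehat v^k}{L^2(\omega_\ell(z))}^2\,\norm{\eta_z^\ell}{L^2(\Gamma)}^2.
\end{align*}
Combined with the standard hat-function scalings $\norm{\eta_z^\ell}{L^2(\Gamma)}^2\simeq \widehat h_m^{d-1}$ and $\enorm{\eta_z^\ell}^2\simeq \widehat h_m^{d-2}$ (upper bound from~\eqref{eq:lb2}; the matching lower bound for the energy norm follows by reference-element scaling), each summand is bounded by $\widehat h_m\,\norm{\hyp\widehat v^k}{L^2(\omega_\ell(z))}^2$. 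The stabilization part is treated in parallel using $|\dual{\widehat v^k}{1}_\Gamma|\lesssim\enorm{\widehat v^k}$ and $|\dual{\eta_z^\ell}{1}_\Gamma|\lesssim \widehat h_m^{d-1}$, contributing at most $\widehat h_m^d\,\enorm{\widehat v^k}^2$ per summand.

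Summing first over $z\in\widetilde\NN_\ell$ with $\level_\ell(z)=m$ uses the finite overlap of the patches $\omega_\ell(z)$ from $\gamma$-shape regularity (with the total node count at level $m$ bounded by $\sim\widehat h_m^{-(d-1)}$ to absorb the stabilization part), and summing over $\ell$ uses Lemma~\ref{lem:Kbound} ($\#\widetilde\KK_m(z)\le\c{Cbound}$). This yields
\begin{align*}
  \edual{\widetilde\QQ^L_m \widehat v^k}{\widehat v^k}
  \lesssim \widehat h_m\,\norm{\hyp\widehat v^k}{L^2(\Gamma)}^2 + \widehat h_m\,\enorm{\widehat v^k}^2.
\end{align*}
To close, I would invoke the standard inverse estimate on the quasi-uniform mesh $\widehat\TT_k$ together with boundedness of $\hyp$:
\begin{align*}
  \norm{\hyp\widehat v^k}{L^2(\Gamma)}^2
  \lesssim \norm{\widehat v^k}{H^1(\Gamma)}^2
  \lesssim \widehat h_k^{-1}\,\norm{\widehat v^k}{H^{1/2}(\Gamma)}^2
  \simeq \widehat h_k^{-1}\,\enorm{\widehat v^k}^2.
\end{align*}
Since $\widehat h_m/\widehat h_k = 2^{k-m}$ and $\widehat h_m\le\widehat h_0$, the hypothesis $k\le m+1$ makes both contributions $\lesssim 2^{-(m+1-k)}\enorm{\widehat v^k}^2$, which is the claimed bound.

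The main obstacle is keeping the analysis strictly local: a direct global Cauchy--Schwarz of the form $|\dual{\hyp\widehat v^k}{\eta_z^\ell}_\Gamma|\le\norm{\hyp\widehat v^k}{L^2(\Gamma)}\norm{\eta_z^\ell}{L^2(\Gamma)}$ would lose a factor equal to the number of level-$m$ nodes in $\widetilde\NN_\ell$ and destroy the estimate. Only the patch-local $L^2$-estimate of $\hyp\widehat v^k$ together with finite patch overlap and Lemma~\ref{lem:Kbound} produces the right summability. The restriction $k\le m+1$ enters precisely because the inverse estimate produces the factor $\widehat h_m\widehat h_k^{-1}=2^{k-m}$, which must be bounded by a constant for the argument to deliver the desired decay rather than growth.
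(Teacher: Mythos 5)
Your proof takes essentially the same approach as the paper: explicit projection formula, split into hypersingular and stabilization terms, patch-local Cauchy--Schwarz using $\hyp\widehat v^k\in L^2(\Gamma)$, the hat-function scalings $\norm{\eta_z^\ell}{L^2(\Gamma)}^2 \simeq h_\ell(z)\enorm{\eta_z^\ell}^2$, finite patch overlap plus Lemma~\ref{lem:Kbound} for the node/level sums, and the inverse estimate $\widehat h_k\norm{\widehat v^k}{H^1(\Gamma)}^2\lesssim\norm{\widehat v^k}{H^{1/2}(\Gamma)}^2$ on the uniform mesh. The only cosmetic difference is that the paper justifies the two-sided hat-function estimate by citing~\cite[Theorem~4.8]{amt99}, while you invoke reference-element scaling for the lower bound of $\enorm{\eta_z^\ell}^2$ -- both are valid.
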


\begin{proof}
By definition of $\widetilde\QQ_m^L$, it holds
\begin{align}\label{eq:ub1}
  \edual{\widetilde\QQ_m^L\widehat v^k}{\widehat v^k}
 &= \sum_{\ell=0}^L \sum_{\substack{z\in\widetilde\NN_\ell \\ \level_\ell(z)=m}}
  \edual{\prec_z^\ell \widehat v^k}{\widehat v^k} 
 = \sum_{\ell=0}^L \sum_{\substack{z\in\widetilde\NN_\ell \\ \level_\ell(z)=m}}
  \enorm{\prec_z^\ell \widehat v^k}^2 \ge0.
\end{align}
Let $z\in\widetilde\NN_\ell$ with $\level_\ell(z)=m$.
Lemma~\ref{lem:unifEquivalence} states $h_\ell(z) \simeq \widehat h_m$ and 
$z\in\widetilde\NN_\ell\cap\widehat\NN_m$.
From the representation~\eqref{eq:Pz} of $\prec_z^\ell$, we get
\begin{align}\label{eq:scs3}
  \edual{\prec_z^\ell \widehat v^k}{\widehat v^k} = 
  \frac{\edual{\widehat v^k}{\eta_z^\ell}^2}{\enorm{\eta_z^\ell}^2}
  \lesssim  \frac{\dual{\hyp\widehat v^k}{\eta_z^\ell}_\Gamma^2 +
  \dual{\widehat v^k}1_\Gamma^2\dual{\eta_z^\ell}1_\Gamma^2}{\enorm{\eta_z^\ell}^2}.
\end{align}
According to, e.g.,~\cite[Theorem~4.8]{amt99}, it holds $h_\ell(z)^s\,\norm{\eta_z^\ell}{H^s(\Gamma)}\simeq\norm{\eta_z^\ell}{L^2(\Gamma)}$, where the hidden constants depends on $\Gamma$, 
$0\leq s\leq 1$, and $\gamma$-shape regularity of $\TT_\ell$.
With $h_\ell(z)\,\enorm{\eta_z^\ell}^2 
\simeq h_\ell(z)\,\norm{\eta_z^\ell}{H^{1/2}(\Gamma)}^2 
\simeq \norm{\eta_z^\ell}{L^2(\Gamma)}^2$,
the Cauchy-Schwarz inequality thus gives
\begin{align*}
  \frac{\dual{\hyp\widehat v^k}{\eta_z^\ell}_\Gamma^2}{\enorm{\eta_z^\ell}^2} 
  \lesssim h_\ell(z) \norm{\hyp \widehat v^k}{L^2(\omega_\ell(z))}^2
  \lesssim \frac{\widehat h_{m}}{\widehat h_k}\, \norm{\widehat h_k^{1/2} 
\hyp\widehat v^k}{L^2(\omega_\ell(z))}^2
\lesssim 2^{-(m+1-k)}\,\norm{\widehat h_k^{1/2} \hyp\widehat v^k}{L^2(\omega_\ell(z))}^2.
\end{align*}
For the stabilization term, the same arguments together with
$\dual{\eta_z^\ell}1_\Gamma\le\norm{\eta_z^\ell}{L^2(\Gamma)}|\omega_\ell(z)|^{1/2}$ yield
\begin{align*}
 \frac{\dual{\widehat v^k}1_\Gamma^2\dual{\eta_z^\ell}1_\Gamma^2}{\enorm{\eta_z^\ell}^2}
 &\lesssim 
 h_\ell(z) |\omega_\ell(z)| \norm{\widehat v^k}{H^{1/2}(\Gamma)}^2
\lesssim 2^{-(m+1-k)}\,|\omega_\ell(z)| \norm{\widehat v^k}{H^{1/2}(\Gamma)}^2,
\end{align*}
where the last estimate follows from 
$h_\ell(z)\simeq\widehat h_m \lesssim \widehat h_m/\widehat h_k = 2\cdot 2^{-(m+1-k)}$.
Combining these three estimates, we obtain
\begin{align*}
  \edual{\prec_z^\ell \widehat v^k}{\widehat v^k}
 \lesssim 2^{-(m+1-k)}\,\big(\norm{\widehat h_k^{1/2} \hyp\widehat v^k}{L^2(\omega_\ell(z))}^2
 + |\omega_\ell(z)| \norm{\widehat v^k}{H^{1/2}(\Gamma)}^2\big)
\end{align*}
By Lemma~\ref{lem:patchinclusion}, we have $\omega_\ell(z) \subseteq \widehat\omega_m^n(z)$. The representation of 
$\widetilde\QQ_m^L$ from~\eqref{eq:ub1} thus gives
\begin{align*}
  \edual{\widetilde\QQ_m^L\widehat v^k}{\widehat v^k}
& \lesssim2^{-(m+1-k)}
  \sum_{\ell=0}^L \sum_{\substack{z\in\widetilde\NN_\ell \\ \level_\ell(z)=m}}
   (\norm{\widehat h_k^{1/2}\hyp \widehat v^k}{L^2(\widehat\omega_m^n(z))}^2 +
  |\widehat\omega_m^n(z)| \norm{\widehat v^k}{H^{1/2}(\Gamma)}^2).
\end{align*}
By definition~\eqref{def:KK} of $\widetilde\KK_m(z)$ and Lemma~\ref{lem:Kbound}, the double sum can be
rewritten and further estimated by
\begin{align}\label{eq:scs5}
\begin{split}
 \sum_{\ell=0}^L \sum_{\substack{z\in\widetilde\NN_\ell \\ \level_\ell(z)=m}}&
   \big(\norm{\widehat h_k^{1/2}\hyp \widehat v^k}{L^2(\widehat\omega_m^n(z))}^2 +
  |\widehat\omega_m^n(z)| \norm{\widehat v^k}{H^{1/2}(\Gamma)}^2\big)\\
  &= \sum_{z\in \widehat\NN_m\cap\NN_L} \sum_{\ell\in\widetilde\KK_m(z)}
  \big(\norm{\widehat h_k^{1/2}\hyp \widehat v^k}{L^2(\widehat\omega_m^n(z))}^2
  + |\widehat\omega_m^n(z)| \norm{\widehat v^k}{H^{1/2}(\Gamma)}^2\big) \\
  &\lesssim \sum_{z\in \widehat\NN_m\cap\NN_L}
  \big(\norm{\widehat h_k^{1/2}\hyp \widehat v^k}{L^2(\widehat\omega_m^n(z))}^2
  + |\widehat\omega_m^n(z)| \norm{\widehat v^k}{H^{1/2}(\Gamma)}^2\big) \\
  &\leq \sum_{z\in \widehat\NN_m}
  \big(\norm{\widehat h_k^{1/2}\hyp \widehat v^k}{L^2(\widehat\omega_m^n(z))}^2
  + |\widehat\omega_m^n(z)| \norm{\widehat v^k}{H^{1/2}(\Gamma)}^2\big)
\end{split}
\end{align}
By $\gamma$-shape regularity of $\widehat\TT_m$, it holds
\begin{align*}
 \sum_{z\in \widehat\NN_m}
  \big(\norm{\widehat h_k^{1/2}\hyp \widehat v^k}{L^2(\widehat\omega_m^n(z))}^2
  + |\widehat\omega_m^n(z)| \norm{\widehat v^k}{H^{1/2}(\Gamma)}^2\big)
 &\lesssim 
  \norm{\widehat h_k^{1/2}\hyp \widehat v^k}{L^2(\Gamma)}^2
  + \norm{\widehat v^k}{H^{1/2}(\Gamma)}^2.
\end{align*}
Recall stability $\hyp:H^1(\Gamma)\to L^2(\Gamma)$. Together with
an inverse estimate between $H^1(\Gamma)$ and $H^{1/2}(\Gamma)$ for
piecewise polynomials, see e.g.~\cite[Proposition~5]{hypsing3d}, we obtain
\begin{align}\label{eq:scs8}
 \norm{\widehat h_k^{1/2}\hyp \widehat v^k}{L^2(\Gamma)}^2
 = \widehat h_k\norm{\hyp \widehat v^k}{L^2(\Gamma)}^2
 \lesssim \widehat h_k\norm{\widehat v^k}{H^1(\Gamma)}^2
 \lesssim \norm{\widehat v^k}{H^{1/2}(\Gamma)}^2.
\end{align}
We note that the latter estimate does not only hold for uniform triangulations
$\widehat\TT_k$, but also for shape-regular triangulations and higher-order
polynomials~\cite[Corollary~2]{invest}.
Combining the last four estimates
with norm equivalence $\norm{\widehat v^k}{H^{1/2}(\Gamma)}\simeq\enorm{\widehat v^k}$, 
we conclude the proof.
\end{proof}

The rest of the proof follows along the lines
of the proof of~\cite[Lemma~2.8]{transtep96} and is given for completeness.
We note that Lemma~\ref{lem:hypscs} implies, in particular, that 
$\edual{\widetilde\QQ_m^L v}{w}$ defines a positive semi-definite and symmetric bilinear
form on $\widehat\XX^k$ for $k\le m+1$ and hence satisfies a Cauchy-Schwarz inequality.

\begin{proof}[Proof of upper bound in~\eqref{eq:prop:PAS}]
Let $\widehat\gal_m : H^{1/2}(\Gamma) \to \widehat\XX^m$ denote the Galerkin projection onto
$\widehat\XX^m$ with respect to the scalar product $\edual\cdot\cdot$, i.e.,
\begin{align}
  \edual{\widehat\gal_m v}{\widehat w^m} = \edual{v}{\widehat w^m} \quad\text{for all }
  \widehat w^m\in\widehat\XX^m.
\end{align}
Note that $\widehat \gal_m$ is the orthogonal projection onto $\widehat\XX^m$ with respect to
the energy norm $\enorm\cdot$. We set $\widehat\gal_{-1} := 0$.
For any $v\in\XX^L\subseteq \widehat\XX^M$, it holds
\begin{align}
  \widehat\gal_m v = \sum_{k=0}^m
  (\widehat\gal_k-\widehat\gal_{k-1}) v
  \quad\text{as well as}\quad
  \widehat\gal_Mv=v = \widehat\gal_{M+1}v.
\end{align}
Lemma~\ref{lem:hatunif} yields $\widetilde\QQ_m^Lv\in\widehat\XX^{m+1}$. The symmetry of the orthogonal projection $\widehat\gal_m$ hence shows
\begin{align*}
  \edual{\widetilde\QQ_m^Lv}{v}
 = \edual{\widetilde\QQ_m^Lv}{\widehat\gal_{m+1}v}
 &= \sum_{k=0}^{m+1}\edual{\prec_m^Lv}{(\widehat\gal_k-\widehat\gal_{k-1})v}
 \\&\le
 \sum_{k=0}^{m+1}\edual{\widetilde\QQ_m^Lv}{v}^{1/2}\edual{\widetilde\QQ_m^L(\widehat\gal_k-\widehat\gal_{k-1})v}{(\widehat\gal_k-\widehat\gal_{k-1})v}^{1/2},
\end{align*}
where we have used the Cauchy-Schwarz inequality for $\edual{\widetilde\QQ_m^L v}{w}$
with $w=(\widehat\gal_k-\widehat\gal_{k-1})v\in\widehat\XX^k$.
For the second scalar product, we apply Lemma~\ref{lem:hypscs} and obtain
\begin{align*}
 \edual{\widetilde\QQ_m^L(\widehat\gal_k-\widehat\gal_{k-1})v}{(\widehat\gal_k-\widehat\gal_{k-1})v}
 \lesssim 
2^{-(m+1-k)}\,\enorm{(\widehat\gal_k-\widehat\gal_{k-1})v}^2
 = 
 2^{-(m+1-k)}\edual{(\widehat\gal_k-\widehat\gal_{k-1})v}{v}
\end{align*}
With the  representation~\eqref{eq:pasAlt} of $\PAStilde^L$ and the
Young inequality, we infer
\begin{align*}
  \edual{\PAStilde^L v}{v} &= \sum_{m=0}^M \edual{\widetilde\QQ_m^L v}{v} \\
  &\leq  \frac\delta{2} \sum_{m=0}^M \sum_{k=0}^{m+1}
  2^{-(m+1-k)/2} \edual{\widetilde\QQ_m^Lv}v
  + \frac{\delta^{-1}}2 \sum_{m=0}^M\sum_{k=0}^{m+1}
  2^{-(m+1-k)/2} \edual{\widehat\gal_k-\widehat\gal_{k-1})v}v,
\end{align*}
for all $\delta>0$.
There holds $\sum_{k=0}^{m+1} 2^{-(m+1-k)/2} \leq \sum_{k=0}^\infty
2^{-k/2} =: K<\infty$. Changing the summation
indices in the second sum, we see
\begin{align*}
  \edual{\PAStilde^L v}v
 &\leq K\frac\delta{2} \, \sum_{m=0}^M \edual{\widetilde\QQ_m^Lv}v + 
 \frac{\delta^{-1}}2 \, \sum_{k=0}^{M+1} \sum_{m=k-1}^M 2^{-(m+1-k)/2} 
 \edual{(\widehat\gal_k-\widehat\gal_{k-1})v}v \\
  &\leq K\frac\delta{2} \, \sum_{m=0}^M  \edual{\widetilde\QQ_m^Lv}v + 
  K \frac{\delta^{-1}}2 \,\sum_{k=0}^{M+1}
  \edual{(\widehat\gal_k-\widehat\gal_{k-1})v}v \\
  &= K\frac\delta{2} \edual{\PAStilde^L v}v + K \frac{\delta^{-1}}2 \edual{v}v,
\end{align*}
where the final equality follows from the telescoping series and 
$\widehat\gal_{M+1}v=v$.
Choosing $\delta>0$ sufficiently small and absorbing the first-term on the right-hand 
side on the left, we conclude the upper bound in~\eqref{eq:prop:PAS}.
\end{proof}


\section{Proof of Theorem~\ref{thm:gmld}}\label{section:proof:gmld}
\noindent
Clearly, the abstract analytical setting for additive Schwarz
operators from Section~\ref{sec:as} also applies for the operator
\begin{align}
  \PAS^L = \sum_{\ell=0}^L \sum_{z\in\NN_\ell} \prec_z^\ell
\end{align}
associated to the preconditioner defined in Section~\ref{section:main:gmld}.
We stress that the properties of $(\BB^L)^{-1}$ and $\PPAS$ follow in the same
way as for Theorem~\ref{thm:main}. It thus remains to provide a lower and upper
bound for the operator $\PAS^L$ in analogy to Proposition~\ref{prop:as} for
$\PAStilde^L$.
\begin{proposition}\label{prop:gmld}
 The operator $\PAS^L$ satisfies
\begin{align}\label{eq:prop:GMLD}
  c \, \enorm{v}^2 \leq \edual{\PAS^L v}v 
 \le C(L+1)\,\enorm{v}^2\quad\text{for all } v\in\XX^L.
\end{align}
The constants $c,C>0$ depend only on $\Gamma$ and the initial triangulation
$\TT_0$.
\end{proposition}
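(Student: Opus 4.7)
The plan is to reduce both bounds to Proposition~\ref{prop:as} for the local operator $\PAStilde^L$. For the lower bound I would simply compare the two operators: since $\widetilde\NN_\ell\subseteq\NN_\ell$ by~\eqref{eq:defNtilde} and each $\prec_z^\ell$ is the $\edual{\cdot}{\cdot}$-orthogonal projection onto $\XX_z^\ell$, one has $\edual{\prec_z^\ell v}{v}=\enorm{\prec_z^\ell v}^2\ge 0$, hence
\begin{align*}
\edual{\PAS^L v}{v} = \edual{\PAStilde^L v}{v} + \sum_{\ell=0}^L \sum_{z\in\NN_\ell\setminus\widetilde\NN_\ell} \enorm{\prec_z^\ell v}^2 \ge \edual{\PAStilde^L v}{v},
\end{align*}
so the lower bound in~\eqref{eq:prop:GMLD} follows directly from Proposition~\ref{prop:as} with the same constant $c$.

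For the upper bound I would mimic the argument of Section~\ref{section:upper bound} with the level-wise splitting $\PAS^L = \sum_{m=0}^M \QQ_m^L$, where $\QQ_m^L := \sum_{\ell=0}^L \sum_{z\in\NN_\ell,\,\level_\ell(z)=m} \prec_z^\ell$ and $M := \max_{z\in\NN_L}\level_L(z)$. Lemma~\ref{lem:hatunif} still yields $\QQ_m^L v\in\widehat\XX^{m+1}$, so the Galerkin--orthogonality bookkeeping of Section~\ref{section:upper bound} (the telescoping $v=\sum_k(\widehat\gal_k-\widehat\gal_{k-1})v$, Cauchy--Schwarz for the semi-definite form $\edual{\QQ_m^L\cdot}{\cdot}$, Young's inequality, and the geometric sum $\sum_k 2^{-k/2}<\infty$) transfers verbatim, once an analogue of the strengthened Cauchy--Schwarz Lemma~\ref{lem:hypscs} is in hand.

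The sole new ingredient is therefore the estimate
\begin{align*}
0 \le \edual{\QQ_m^L \widehat v^k}{\widehat v^k} \le C\,(L+1)\,2^{-(m+1-k)}\enorm{\widehat v^k}^2 \quad\text{for all }\widehat v^k\in\widehat\XX^k,\ k\le m+1,
\end{align*}
whose proof repeats that of Lemma~\ref{lem:hypscs} until the rearrangement~\eqref{eq:scs5}, where the double sum is rewritten via $\KK_m(z):=\{\ell: z\in\NN_\ell,\ \level_\ell(z)=m\}$. The uniform bound of Lemma~\ref{lem:Kbound} on $\#\widetilde\KK_m(z)$ is no longer available, but since the shortest incident edge $h_\ell(z)$ is non-increasing in $\ell$, the level $\level_\ell(z)$ is non-decreasing, so $\KK_m(z)$ is a range of consecutive indices and the trivial estimate $\#\KK_m(z)\le L+1$ applies. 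This single factor of $L+1$ then propagates linearly through the remaining Young/geometric-series step, yielding $\edual{\PAS^L v}{v}\le C(L+1)\enorm{v}^2$. The main obstacle is conceptually minor and amounts to bookkeeping, namely checking that no further $L$-dependencies creep in; that the resulting linear-in-$L$ growth cannot be improved is confirmed numerically in Section~\ref{sec:artLshape}.
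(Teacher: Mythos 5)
Your proposal is correct and follows essentially the same route as the paper: the lower bound is inherited from Proposition~\ref{prop:as} (your direct operator comparison $\edual{\PAS^L v}{v}\ge\edual{\PAStilde^L v}{v}$ is a concise repackaging of the paper's re-use of the same stable decomposition in Lions' lemma), and the upper bound comes from repeating the strengthened Cauchy--Schwarz argument of Lemma~\ref{lem:hypscs} with $\widetilde\KK_m(z)$ replaced by $\KK_m(z)$ and the trivial bound $\#\KK_m(z)\le L+1$, which is exactly the paper's Lemma~\ref{lem:KboundGMLD} and Lemma~\ref{lem:hypscsGMLD}.
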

In principle, the proof follows the same lines as in
Section~\ref{section:lower bound}--\ref{section:upper bound}. We sketch the most important
modifications only. Details are left to the reader.

\subsection{Proof of lower bound in~\eqref{eq:prop:GMLD}}
By virtue of Lemma~\ref{lem:stabledec}, we need to construct a decomposition $v
= \sum_{\ell=0}^L \sum_{z\in\NN_\ell} v_z^\ell$ with $v_z^\ell \in \XX_z^\ell$
and $\sum_{\ell=0}^L \sum_{z\in\NN_\ell} \enorm{v_z^\ell}^2 \leq c^{-1}
\enorm{v}^2$, for all $v\in\XX^L$. Since $\widetilde\NN_\ell \subseteq
\NN_\ell$, we may rely on the same decomposition as in
Section~\ref{section:lower bound}. This concludes the proof with the same
constant $c>0$ for Proposition~\ref{prop:as} and Proposition~\ref{prop:gmld}.

\subsection{Proof of upper bound in~\eqref{eq:prop:GMLD}}
For the proof of the upper bound in~\eqref{eq:prop:GMLD}, we define the set
\begin{align}
  \KK_k(z) := \set{ \ell\in \{0,1,\dots,L\}}{z\in\NN_\ell
  \quad\text{and}\quad \level_\ell(z) = k}.
\end{align}
\begin{lemma}\label{lem:KboundGMLD}
  For all $z\in\NN_L$ and $k\in\N_0$ there holds
  \begin{align*}
    \# \KK_k(z) \leq L + 1.
  \end{align*}
\end{lemma}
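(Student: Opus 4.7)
The proof plan here is essentially immediate from the definition, and the real content of the lemma is that this bound cannot be improved to a constant independent of $L$ (in contrast with Lemma~\ref{lem:Kbound} for the sets $\widetilde\KK_k(z)$).

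By the very definition of $\KK_k(z)$, it is a subset of $\{0,1,\dots,L\}$, hence trivially $\#\KK_k(z) \le L+1$. The plan is therefore simply to unfold the definition and observe this inclusion; no auxiliary results are needed.

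The conceptual point worth highlighting (and which I would include as a brief remark accompanying the proof) is why one cannot hope for the $L$-independent bound of Lemma~\ref{lem:Kbound}: a node $z \in \NN_\ell$ that is never touched by refinement in subsequent steps keeps the same local mesh-size $h_\ell(z)$, hence the same level $\level_\ell(z)=k$, for arbitrarily many consecutive indices $\ell, \ell+1, \dots, L$. It is precisely this phenomenon that forces the factor $(L+1)$ in the upper eigenvalue bound of Theorem~\ref{thm:gmld}, and which the local variant $\widetilde\NN_\ell$ avoids by restricting to nodes whose patch has just shrunk. The main (and only) step is thus the trivial counting argument; there is no real obstacle.
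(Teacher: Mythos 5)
Your proof is correct and coincides with the paper's one-line argument: $\KK_k(z)\subseteq\{0,1,\dots,L\}$ by definition, hence $\#\KK_k(z)\le L+1$. The accompanying remark on why the bound cannot be improved (a node untouched by refinement keeps the same level across many indices $\ell$) is a useful observation but is not part of the lemma's proof.
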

\begin{proof}
Obviously, there are at most $L+1$ indices in the set $\KK_k(z)$.
\end{proof}
We proceed as in Section~\ref{section:upper bound} and provide a similar result
as in Lemma~\ref{lem:hypscs}, where, however, Lemma~\ref{lem:KboundGMLD} plays an
important role in the proof.
To that end, we define the operator $\QQ_m^L : \XX^L \to \widehat \XX^{m+1}$ by
\begin{align*}
  \QQ_m^L := \sum_{\ell=0}^L \sum_{\substack{z\in\NN_\ell\\ \level_\ell(z) = m}}
  \prec_z^\ell,
\end{align*}
and stress that
\begin{align*}
  \PAS^L = \sum_{m=0}^M \QQ_m^L.
\end{align*}
\begin{lemma}\label{lem:hypscsGMLD}
For all $0\leq m\leq M$, $k\leq m+1$
\begin{align}\label{eq:hypscsGMLD}
  0 \le \edual{\QQ^L_m \widehat v^k}{\widehat v^k} 
 \le\c{csugmld} \,(L+1)\,
  2^{-(m+1-k)} \enorm{\widehat v^k}^2
  \quad\text{for all }\widehat v^k\in\widehat\XX^k.
\end{align}
The constant $\setc{csugmld}>0$ depends only on $\Gamma$ and the initial triangulation $\TT_0$.
\end{lemma}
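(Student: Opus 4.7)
The plan is to mirror the proof of Lemma~\ref{lem:hypscs} essentially line by line, diverging only at the single step where a cardinality bound is invoked when switching the order of summation. First I would expand
\[
\edual{\QQ_m^L \widehat v^k}{\widehat v^k}
= \sum_{\ell=0}^L \sum_{\substack{z\in\NN_\ell\\ \level_\ell(z)=m}} \enorm{\prec_z^\ell \widehat v^k}^2,
\]
which is manifestly non-negative and yields the lower bound of~\eqref{eq:hypscsGMLD}. For each fixed $z \in \NN_\ell$ with $\level_\ell(z) = m$, the pointwise estimate
\[
\edual{\prec_z^\ell \widehat v^k}{\widehat v^k}
\lesssim 2^{-(m+1-k)} \bigl( \norm{\widehat h_k^{1/2}\hyp \widehat v^k}{L^2(\omega_\ell(z))}^2 + |\omega_\ell(z)|\,\norm{\widehat v^k}{H^{1/2}(\Gamma)}^2 \bigr)
\]
is derived exactly as in Lemma~\ref{lem:hypscs}: the argument only uses the representation~\eqref{eq:Pz} of $\prec_z^\ell$, the normalization of $\eta_z^\ell$, Lemma~\ref{lem:unifEquivalence} (which already applies to arbitrary $z\in\NN_\ell$), and the equivalence $\enorm{\eta_z^\ell}^2 \simeq h_\ell(z)^{d-2}$. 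None of these ingredients relies on $z$ belonging to $\widetilde\NN_\ell$.

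The key structural difference then appears at the summation-swap step. Using Lemma~\ref{lem:patchinclusion} to replace $\omega_\ell(z)$ by $\widehat\omega_m^n(z)$ and exchanging the order of summation via the set $\KK_m(z)$, I would rewrite
\[
\sum_{\ell=0}^L \sum_{\substack{z\in\NN_\ell\\ \level_\ell(z)=m}} (\cdots)
= \sum_{z \in \NN_L \cap \widehat\NN_m}\ \sum_{\ell \in \KK_m(z)} (\cdots)
\leq (L+1)\, \sum_{z \in \widehat\NN_m} (\cdots),
\]
where the final inequality invokes Lemma~\ref{lem:KboundGMLD}. This is the \emph{only} place where the $L$-dependence enters: in the proof of Lemma~\ref{lem:hypscs}, the analogous bound $\#\widetilde\KK_m(z) \leq \c{Cbound}$ gives an $L$-independent constant, whereas for the full index set $\KK_m(z)$ one can only assert $\#\KK_m(z)\le L+1$, and this factor propagates directly to the right-hand side of~\eqref{eq:hypscsGMLD}.

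Finally, the closing passage---reducing the sum over $z \in \widehat\NN_m$ to a single global $L^2$-norm by $\gamma$-shape regularity of $\widehat\TT_m$, combined with the continuity $\hyp : H^1(\Gamma) \to L^2(\Gamma)$ and the polynomial inverse estimate $\widehat h_k\, \norm{\hyp \widehat v^k}{L^2(\Gamma)}^2 \lesssim \norm{\widehat v^k}{H^{1/2}(\Gamma)}^2 \simeq \enorm{\widehat v^k}^2$---carries over verbatim from the block~\eqref{eq:scs5}--\eqref{eq:scs8} of Lemma~\ref{lem:hypscs}. The main obstacle is therefore purely bookkeeping: checking that every intermediate estimate used in the proof of Lemma~\ref{lem:hypscs} is valid for an arbitrary $z\in\NN_\ell$ rather than only for $z\in\widetilde\NN_\ell$, so that substituting the weaker counting bound from Lemma~\ref{lem:KboundGMLD} is the only change required to arrive at the extra factor $(L+1)$.
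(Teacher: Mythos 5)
Your proposal is correct and follows essentially the same approach as the paper: it mirrors the proof of Lemma~\ref{lem:hypscs} verbatim, substituting $\NN_\ell$, $\KK_m(z)$, and $\QQ_m^L$ for their tilded counterparts, and correctly identifies that the only point where the $L$-dependence enters is the cardinality bound $\#\KK_m(z)\le L+1$ from Lemma~\ref{lem:KboundGMLD} in place of the uniform bound $\#\widetilde\KK_m(z)\le\c{Cbound}$ used in~\eqref{eq:scs5}. The paper's own proof of this lemma states precisely the same thing, albeit more tersely.
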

\begin{proof}
The proof follows the same lines as the proof of Lemma~\ref{lem:hypscs}. The
important modifications consist in replacing $\widetilde\NN_\ell$ by $\NN_\ell$,
$\widetilde\KK_m(z)$ by $\KK_m(z)$, and $\widetilde\QQ_m^L$ by $\QQ_m^L$. 
However, in estimate~\eqref{eq:scs5} a bound for the cardinality of the set $\widetilde\KK_m(z)$ enters. 
Clearly, we have to replace this bound by the bound for $\# \KK_m(z)$ from Lemma~\ref{lem:KboundGMLD}. Therefore, the
factor $L+1$ comes into estimate~\eqref{eq:hypscsGMLD}.
\end{proof}
The rest of the proof is a simple adaptation of Section~\ref{section:upper bound}, taking care of the additional factor $L+1$.
It is therefore left to the reader.


\section{Extension to screen problems}\label{sec:screen}

\subsection{Continuous setting}
Let $\Gamma \subsetneqq \partial\Omega$ denote an open screen. By $\widetilde
H^{1/2}(\Gamma)$, we denote the space of $H^{1/2}(\partial\Omega)$ functions
which vanish outside of $\Gamma$.
It is known~\cite{steph87} that the hypersingular integral operator $\hyp : \widetilde
H^{1/2}(\Gamma) \to H^{-1/2}(\Gamma)$ from~\eqref{eq:hypsing} is a linear, bounded, symmetric, and
elliptic operator.
The definition 
\begin{align}
  \edual{v}w := \dual{\hyp u}w_\Gamma \quad\text{for all }v,w\in \widetilde
  H^{1/2}(\Gamma)
\end{align}
provides a scalar product on $\widetilde H^{1/2}(\Gamma)$, and the induced norm
$\enorm{v}^2 := \edual{v}v$ is an equivalent norm on $\widetilde
H^{1/2}(\Gamma)$.
Instead of~\eqref{eq:weakform}, we consider the variational formulation of the
hypersingular integral equation~\eqref{eq:hypsing}
\begin{align}
  \edual{u}v = \dual{f}v_\Gamma \quad\text{for all }v\in \widetilde
  H^{1/2}(\Gamma)
\end{align}
with right-hand side $f\in H^{-1/2}(\Gamma)$.
The lemma of Lax-Milgram proves that this formulation admits a unique solution
$u \in\widetilde H^{1/2}(\Gamma)$.

\subsection{Notations}
We use the same notations as in Section~\ref{section:main:triangulation}--\ref{section:main:hierarchy}.
The discrete space $\XX^\ell$ from~\eqref{eq:defdiscretespace} is replaced by
the definition
\begin{align}
  \XX^\ell := \SS_0^1(\TT_\ell) := \SS^1(\TT_\ell) \cap \widetilde H^{1/2}(\Gamma),
\end{align}
i.e.\ the space of piecewise linear and globally continuous functions, which
vanish outside the open boundary part $\Gamma$.
Moreover, the set $\NN_\ell$ now does not consist of all nodes of the
triangulation $\TT_\ell$, but only of the nodes which lie inside $\Gamma$, i.e.
\begin{align}\label{eq:defNscreen}
  \NN_\ell := \{z \text{ is a node of }\TT_\ell \text{ which lies inside }
  \Gamma \text{ but not on the relative boundary } \partial\Gamma.\}
\end{align} 
Clearly, $\# \NN_\ell = \dim(\SS_0^1(\Gamma))$.
Also note that the definition~\eqref{eq:defNtilde} of the set
$\widetilde\NN_\ell$ involves the set $\NN_\ell$.

The uniformly refined spaces $\widehat\XX^m$ are defined accordingly.

\subsection{Multilevel diagonal preconditioner}
We stick with the settings and notations as in
Section~\ref{section:main:precond}--\ref{section:main:gmld}.

\begin{theorem}
  Theorem~\ref{thm:main} and Theorem~\ref{thm:gmld} hold for screen problems.
\end{theorem}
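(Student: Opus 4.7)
The plan is to recycle the entire proof of Proposition~\ref{prop:as} and Proposition~\ref{prop:gmld} line by line, changing only three ingredients: drop every stabilization term (since $\hyp$ is already $\widetilde H^{1/2}(\Gamma)$-elliptic on the screen), replace the discrete space $\XX^\ell$ by $\SS_0^1(\TT_\ell)$ with $\NN_\ell$ containing only interior nodes, and verify that the two genuinely analytic ingredients (the multilevel $L^2$-projection estimate of Lemma~\ref{lem:h12normest} and the Scott-Zhang machinery of Section~\ref{sec:sz}) survive on the screen. The abstract calculus of additive Schwarz operators (Lemma~\ref{lem:propPAS}, Lemma~\ref{lem:stabledec}, the matrix-operator translation in Section~\ref{sec:as}) is purely Hilbert-space and needs no change once $\edual\cdot\cdot$ is reinterpreted as $\dual{\hyp\cdot}\cdot_\Gamma$. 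Similarly the combinatorial lemmas on patches and levels (Lemma~\ref{lem:unifEquivalence}, Lemma~\ref{lem:Kbound}, Lemma~\ref{lem:patchlevel}, Lemma~\ref{lem:patchinclusion}, Lemma~\ref{lem:hatunif}, Lemma~\ref{lem:KboundGMLD}) depend only on $\gamma$-shape regularity and on the refinement rule; they apply verbatim after restricting $\NN_\ell$ to interior nodes.

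For the lower bound, I would redo Section~\ref{section:lower bound} using the Scott-Zhang operator $J_\ell\colon L^2(\Gamma)\to\SS_0^1(\TT_\ell)$ built by summing in~\eqref{eq:szdef} only over $z\in\NN_\ell$ (interior nodes) and choosing the support patch $T_z^\ell$ inside $\Gamma$; this automatically preserves zero traces on $\partial\Gamma$ while keeping the projection property $J_\ell v_\ell=v_\ell$ on $\SS_0^1(\TT_\ell)$, the $H^s$-stability, and the pointwise estimate of Lemma~\ref{lem:szest}. The compatibility $T_z^\ell=T_z^{\ell-1}$ for unchanged nodes, and hence~\eqref{eq:szprop1b}, carries over. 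The required multilevel norm equivalence
\begin{align*}
  \sum_{m=0}^\infty \widehat h_m^{-1}\norm{v-\widehat\Pi_m v}{L^2(\Gamma)}^2
  \lesssim \norm{v}{\widetilde H^{1/2}(\Gamma)}^2
  \quad\text{for all } v\in\widetilde H^{1/2}(\Gamma),
\end{align*}
with $\widehat\Pi_m$ now the $L^2$-projection onto $\SS_0^1(\widehat\TT_m)$, follows from the interpolation characterization of $\widetilde H^{1/2}$ as the interpolation space between $L^2_0(\Gamma)$ (functions extendable by zero) and $H^1_0(\Gamma)$ together with the $\SS_0^1$-version of~\cite[Theorem~5]{amcl03}; the geometric-series argument of Lemma~\ref{lem:h12normest} is unchanged. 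With these ingredients the telescoping decomposition $v=\sum_\ell (J_\ell-J_{\ell-1})v$ and the estimates after~\eqref{eq:lb1} go through word for word, and Lemma~\ref{lem:stabledec} delivers the lower bound; the GMLD lower bound is free, since the LMLD decomposition is also admissible for GMLD by $\widetilde\NN_\ell\subseteq\NN_\ell$.

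For the upper bound, I would redo Section~\ref{section:upper bound}. In the screen version of Lemma~\ref{lem:hypscs} the stabilization term in~\eqref{eq:scs3} disappears, so that only the contribution
\begin{align*}
  \frac{\dual{\hyp\widehat v^k}{\eta_z^\ell}_\Gamma^2}{\enorm{\eta_z^\ell}^2}
  \lesssim 2^{-(m+1-k)}\,\norm{\widehat h_k^{1/2}\hyp\widehat v^k}{L^2(\omega_\ell(z))}^2
\end{align*}
survives, using the mass-matrix scaling $h_\ell(z)\enorm{\eta_z^\ell}^2\simeq\norm{\eta_z^\ell}{L^2(\Gamma)}^2$ from~\cite[Theorem~4.8]{amt99}, which is purely local and insensitive to the screen boundary. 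Summation over $z$ with the patch counting Lemma~\ref{lem:Kbound} (resp.\ Lemma~\ref{lem:KboundGMLD} for GMLD, producing the factor $L+1$) reduces matters to the inverse-type estimate
\begin{align*}
  \widehat h_k\,\norm{\hyp\widehat v^k}{L^2(\Gamma)}^2
  \lesssim \norm{\widehat v^k}{\widetilde H^{1/2}(\Gamma)}^2
  \quad\text{for all } \widehat v^k\in\SS_0^1(\widehat\TT_k),
\end{align*}
which follows exactly as in~\eqref{eq:scs8} from the mapping property $\hyp\colon H^1(\Gamma)\cap\widetilde H^{1/2}(\Gamma)\to L^2(\Gamma)$ combined with the polynomial inverse estimate between $H^1$ and $H^{1/2}$ cited from~\cite{hypsing3d,invest}; on the screen this is applied to discrete functions that already satisfy the homogeneous Dirichlet condition on $\partial\Gamma$, so no new boundary regularity is required. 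Feeding this into the Cauchy-Schwarz / Young argument with the Galerkin projections $\widehat\gal_m$ onto $\SS_0^1(\widehat\TT_m)$ reproduces the bound $\edual{\widetilde\PAS^L v}v\lesssim\enorm v^2$ for LMLD and $\edual{\PAS^Lv}v\lesssim (L+1)\enorm v^2$ for GMLD.

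The step I expect to be the main obstacle is the $\widetilde H^{1/2}$-analog of Lemma~\ref{lem:h12normest}, because the natural BPX-type characterization on $\widetilde H^s$ is more delicate than on $H^s$ (the extension-by-zero and the behaviour of $\widehat\Pi_m$ on functions that merely vanish in the sense of traces, not pointwise near $\partial\Gamma$, must be reconciled). Once this norm equivalence is established, all other steps are cosmetic modifications of the closed-boundary proof, and both Theorem~\ref{thm:main} and Theorem~\ref{thm:gmld} follow with constants $c,C>0$ depending only on $\Gamma$ and $\TT_0$.
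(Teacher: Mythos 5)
Your proposal follows the paper's proof essentially step by step: carry over the abstract additive-Schwarz calculus and the combinatorial mesh/level lemmas verbatim, adapt the Scott--Zhang operator to $\SS_0^1(\TT_\ell)$ with interior nodes only, invoke the $\widetilde H^{1/2}$-version of \cite[Theorem~5]{amcl03} for the multilevel $L^2$-projection estimate, use the identity $\norm{\eta_z^\ell}{\widetilde H^{1/2}(\Gamma)}=\norm{\eta_z^\ell}{H^{1/2}(\partial\Omega)}$ (interior hat functions extend by zero) for the hat-function scalings, and drop the stabilization term in the upper-bound lemma. The concern you flag about the BPX-type characterization on $\widetilde H^{1/2}$ is legitimate but is exactly what \cite[Theorem~5]{amcl03} supplies, which is also how the paper closes that gap; otherwise, this is the same argument.
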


\begin{proof}
First, we extend Theorem~\ref{thm:main} to problems on open
boundaries.
Note that the abstract analysis of additive Schwarz operators from
Section~\ref{section:proof} holds also for $\Gamma \subsetneqq \partial\Omega$.
In particular, we only need to prove the lower and upper bound from
Proposition~\ref{prop:as}.

We stress that Lemma~\ref{lem:unifEquivalence}--\ref{lem:hatunif} hold
accordingly if $H^{1/2}$ is replaced by $\widetilde H^{1/2}$:
\begin{itemize}
  \item Lemma~\ref{lem:unifEquivalence}, Lemma~\ref{lem:Kbound}--\ref{lem:hatunif}
  hold for this problem, since they are only related to the triangulations and
  the mesh-refinement procedure.
  \item Lemma~\ref{lem:h12normest} remains valid if $H^{1/2}$ is replaced by
    $\widetilde H^{1/2}$, since the equivalence~\eqref{eq:h12normest4} also holds for $\Gamma\subsetneqq \partial\Omega$
    and $H^{1/2}$ replaced by $\widetilde H^{1/2}$,
    see~\cite[Theorem~5]{amcl03}.
  \item Lemma~\ref{lem:szest} involves a variant of the Scott-Zhang operator,
    which has to be constructed appropriately, see Section~\ref{sec:sz} and the
    references therein.
    To this end, one may proceed as in~\cite{hypsing3d} with the restriction on the
    choice of the elements $T_z^\ell$ required here, so that Lemma~\ref{lem:szest}
    remains valid.
\end{itemize}
Altogether, the proof of the lower bound follows the same lines as in
Section~\ref{section:lower bound}. Clearly, the Sobolev space $H^{1/2}(\Gamma)$ has to
be replaced by $\widetilde H^{1/2}(\Gamma)$.
Note that our re-definition~\eqref{eq:defNscreen} of $\NN_\ell$ for $\Gamma
\subsetneqq \partial\Omega$ ensures that the hat-function $\eta_z^\ell$ vanishes outside of $\Gamma$ for all
$z\in\NN_\ell$. Thus, $\omega_\ell(z) = \supp(\eta_z^\ell) \subseteq \overline\Gamma$ and 
\begin{align}\label{eq:screenProof1}
  \norm{\eta_z^\ell}{\widetilde H^{1/2}(\Gamma)} =
  \norm{\eta_z^\ell}{H^{1/2}(\partial\Omega)}.
\end{align}
Therefore, estimate~\eqref{eq:lb2} holds, since
\begin{align}
  \enorm{\eta_z^\ell}^2 \simeq \norm{\eta_z^\ell}{\widetilde H^{1/2}(\Gamma)}^2
  = \norm{\eta_z^\ell}{H^{1/2}(\partial\Omega)}^2 \leq
   \norm{\eta_z^\ell}{L^2(\partial\Omega)}
   \norm{\eta_z^\ell}{H^1(\partial\Omega)} \lesssim h_\ell(z)^{d-2}.
\end{align}
The rest of the proof holds verbatim with the notational adaptations mentioned
above.

Finally, we stress that for the proof of the upper bound in
Proposition~\ref{prop:as}, one has to verify Lemma~\ref{lem:hypscs} only.
Due to~\eqref{eq:screenProof1} and~\cite[Theorem~4.8]{amt99}, we get
\begin{align*}
  h_\ell(z)^{1/2} \norm{\eta_z^\ell}{\widetilde H^{1/2}(\Gamma)} =
  h_\ell(z)^{1/2} \norm{\eta_z^\ell}{H^{1/2}(\partial\Omega)} \simeq
  \norm{\eta_z^\ell}{L^2(\partial\Omega)} = \norm{\eta_z^\ell}{L^2(\Gamma)}.
\end{align*}

The inverse-type estimate~\eqref{eq:scs8} for the
hypersingular integral operator $\hyp$ still hold true for open boundary parts
$\Gamma$ and $H^{1/2}(\Gamma)$ replaced by $\widetilde H^{1/2}(\Gamma)$.
Then, the same proof as for Lemma~\ref{lem:hypscs} can be used, if the stabilization terms from
equation~\eqref{eq:scs3} and the following equations are omitted.

The extension of Theorem~\ref{thm:gmld} to problems on open boundaries
can be obtained by the modifications from above and
Section~\ref{section:proof:gmld}. Details are left to the reader.
\end{proof}

\bibliographystyle{alpha}
\bibliography{literature}

\end{document}